\newcolumntype{M}[1]{>{\centering\arraybackslash}m{#1}}
\newcolumntype{N}{@{}m{0pt}@{}}
\newcommand{\fm}{\mathrm{Fm}}
\newcommand{\f}{\varphi}
\newcommand{\p}{\psi}
\renewcommand{\a}{\alpha}
\renewcommand{\c}{\gamma}
\renewcommand{\d}{\delta}
\newcommand{\ra}{\rightarrow}
\newcommand{\nm}{{\q}}
\newcommand{\rj}{\nm_{RJ}}
\newcommand{\urj}{\nm_{\urjt}}
\newcommand{\prj}{\nm_{\prjt}}
\newcommand{\arj}{\nm_{\arjt}}
\newcommand{\urjt}{uRJ}
\newcommand{\prjt}{\textit{l}RJ}
\newcommand{\arjt}{\ensuremath{\a}RJ}
\newcommand{\rp}{r^{l}}
\newcommand{\ru}{r^{u}}
\newcommand{\rfu}[1]{r^{u_{#1}}}
\newcommand{\rmon}{\text{RMO}}
\newcommand{\umon}{\text{UMO}}
\newcommand{\multid}{\mathcal{M}_{\D}}
\renewcommand{\H}{\mathcal{H}}
\newcommand{\D}{\mathcal{D}}
\newcommand{\maxh}{\hat{\mathcal{H}}}
\newcommand{\fin}[1]{\H_{#1}^{f}}
\newcommand{\ol}[1]{\overline{#1}}
\renewcommand{\H}{\mathcal{H}}
\newcommand{\luka}{\L u\-ka\-si\-e\-w\-icz}
\theoremstyle{plain}
\newtheorem{lemma}{Lemma}
\newtheorem{corollary}{Corollary}
\newtheorem{proposition}{Proposition}
\theoremstyle{definition}
\newtheorem{definition}{Definition}
\newtheorem{example}{Example}
\theoremstyle{remark}   
\newtheorem{remark}{Remark}
\renewcommand{\phi}{\varphi}
\renewcommand{\epsilon}{\varepsilon}
\newcommand{\q}{\mid\hspace{-2.4pt}\sim}
 \DeclareMathOperator*{\argmin}{arg\,min}
 \def\checkmark{\tikz\fill[scale=0.4](0,.35) -- (.25,0) -- (1,.7) -- (.25,.15) -- cycle;}
\begin{document}
%title{Logical systems for reasoning with \replace{scientific}{statistical}  hypotheses
\title{A logical framework for data-driven reasoning}

\author{Paolo Baldi, Esther Anna Corsi, Hykel Hosni}
\maketitle

\begin{abstract}
We introduce and investigate a family of consequence relations with the goal of capturing certain important patterns of data-driven inference. The inspiring idea for our framework is the fact that data may reject, possibly to some degree, and possibly by mistake, any given scientific hypothesis.  There is no general agreement in science about how to do this, which motivates putting forward a logical formulation of the problem.  We do so by  investigating distinct definitions of ``rejection degrees'' each yielding a consequence relation. Our investigation leads to novel variations on the theme of \textit{rational consequence relations}, prominent among non-monotonic logics.
\end{abstract}

\smallskip
\noindent \textit{Keywords.} Data-driven inference,  significance inference, null hypothesis significance testing, non-monotonic logic.

\section{Introduction and motivation}
The research reported in this note originates in \cite{Baldi2023}, where a case is made for investigating  consequence relations capable of expressing certain aspects of scientific inference. In particular, we are interested in capturing the fact that data may lead scientists to reject, possibly to some degree, and possibly  by mistake, any given scientific hypothesis.

The canonical treatment of this general problem makes one of its first appearances in a 1925 book which went on to shape the methodology of much empirical science \cite{Fisher1954}. In it, R.A. Fisher draws attention to  ``the logical nature" of the inference underpinning \emph{tests of significance}.  The problem he sets out to address is that of examining a scientific hypothesis ($H_0$) based on the observations it leads to, if true. When the observations so obtained are improbable enough, they provide grounds for us to reject the assumption that $H_0$ is indeed true.  Fisher's  line of reasoning culminates with what became known as the \emph{Fisher disjunction}:
\begin{quote}
The force with which such a conclusion is supported is logically that of the simple disjunction: \textit{Either} an exceptionally rare event has occurred, \textit{or}  [$H_0$]
%the theory of random distribution 
is not true. (\cite{Fisher1956}, p.39.)
\end{quote}
We understand Fisher's ``simple" as ``being governed by logic", which of course for him could only be (an informal version of) classical logic. Setting aside some rare yet notable exceptions discussed in Section \ref{sec:related}, the statistical and methodological communities seem to have taken this informal-classical-logic  view at face value. Interestingly, this applies to both supporters and critics of the procedures put forward by Fisher. For the focus of the long-standing debates on tests of significance, and in particular on the infamous p-value \cite{Wasserstein2016}, is usually on the meaning and nature of probability, rather than on the properties of the inferences scientists make with it. 

This paper takes a logical perspective on inference based on the data-driven rejection of scientific hypotheses. And, insofar as this is possible, it aims to do so independently of any philosophical view on probability. We ask which properties are desirable for a consequence relation whose intended semantics is based on the degree of rejection that a given set of data provides to a given hypothesis.  Hence we consider a more abstract and general problem than that envisaged by Fisher (and his many followers), which is however recovered as a special case of our question. Our results suggest that reasonable answers to our main question will be variations on the theme of non-monotonic consequence relation, in the sense brought to the attention of logicians by \cite{Shoham1987,Kraus1990,Lehmann1992,Makinson2005}.

The remainder of this introductory section provides the essential background on the kind of scientific inference we focus on,  along with the required logical preliminaries. We clearly do not aim at exhausting the many ramifications of the vast topic of scientific inference. On the contrary, the following two subsections should be thought of as delimiting the scope of our investigation. We defer the discussion on related work to the concluding section of the paper.

\subsection{NHST}\label{sec:NHST} According to a basic tenet in the methodology of science, to assess a scientific hypothesis, one looks at its logical consequences. In this context, the pattern of inference captured by \emph{modus tollens} becomes prominent. If a sentence $\theta$, which is known to be false, follows logically from sentence $\phi$, then we should conclude $\neg\phi$. As we will now recall, this classical pattern of inference is often taken to lend its validity to the procedure known as Null Hypothesis Statistical Testing (NHST).

Let $H_0$ be a sentence (in classical logic)  standing for a scientific hypothesis, usually referred to as the \emph{null hypothesis}. The key idea in NHST is to set up an experiment which leads to observations under the assumption that $H_0$ is true. Denote by $\sigma$ a \emph{test statistics}, i.e. a function of the observations thus obtained. Finally, associate to this function the quantity usually referred to as the ``observed level of significance'', or the ``probability value'', or simply the ``p-value''. This latter is the calculated conditional probability of seeing equally extreme or more extreme data  (according to the test statistics) given $H_0$. In analogy with modus tollens one says that $H_0$ should be rejected if the p-value is small enough.  Here is a schematisation of the argument:
\begin{enumerate}
\item Suppose $H_0$;
\item Calculate the p-value for some test statistics $\sigma$ (i.e. a well-defined function of the data observed conditional on $H_0$);
\item The smaller the p-value, the stronger the reason to believe that $H_0$ is not true.
\end{enumerate}

Many statisticians and methodologists explicitly draw the parallel between (versions of) the above and modus tollens. To make a few notable examples, the authors of \cite{Dickson2011} speak of ``inductive modus tollens'', \cite{Mayo2018} refers to ``statistical modus tollens'', whereas \cite{Royall1997} tellingly notes that it is the analogy with modus tollens  which gives tests of significance  prominence in the ``scientific method'':
\begin{quote}
[Modus tollens] 
is at the heart of the philosophy of science, according to Popper. Its statistical manifestation is in [the] formulation of hypothesis testing that we will call ‘rejection trials’. (\cite{Royall1997}, p.72)
\end{quote}
Quite interestingly, this view is shared also by prominent critics of NHST, e.g. \cite{Szucs2017}.

Whilst the appeal of the loose analogy with modus tollens is clear, it is misleading nonetheless. For no probabilistic test will  deliver $\neg H_0$. Royall is again an authoritative voice who acknowledges the problem, but then argues that it is of no real consequence.``But the form of reasoning in the statistical version of the problem parallels that in deductive logic: if $H_0$ implies $E$ (with high probability), then not-$E$ justifies rejecting $H_0$'' \cite{Royall1997}, p.73.

The qualitative difference between $H_0$ being classically false and it being very improbable did not escape the attention of Fisher and his scrupulous followers, when they insist that the p-value is best interpreted as the degree to which observational data turns out to be incompatible with $H_0$.  So a significance test can only lead one to conclude that $H_0$ is \emph{unlikely to be true}, if the p-value is small enough. But then the validity of this conclusion owes to the Fisher disjunction, rather than to modus tollens. And in turn, the former is grounded on the metaphysical assumption that small-probability events normally do not happen. Many methodologists, probabilists and statisticians strongly disagree with this, as testified by the animosity of the long-lasting debate on this topic \cite{Howson1993,Wasserstein2016,Wasserstein2019}.

Since ``being unlikely true'' is logically distinct from ``being classically false'',  we have no reason to believe that the above schematisation holds under uncertainty. Indeed, it is well known that modus tollens need not be adequate for probabilistic reasoning \cite{Bosley1979}, and in general fails to deliver a point-valued probability \cite{Wagner2004}. Given that  uncertainty is the norm, rather than the exception in science, and that uncertainty is most typically quantified probabilistically, the standard justification for adopting the canonical significance tests is, at the very least, in clear need of a logical clarification.

Note that a line of criticism has indeed addressed ``the logic of NHST'', concluding that it is defective 
\cite{Pollard1987,Falk1995,Krueger2001,Szucs2017}. The following example, which probably appears for the first time in \cite{Pollard1987} is by now  standard. 
\begin{example}\label{ex:MT} Consider the following argument. 

  \begin{itemize}
    \item[1] Either Harold is a US citizen or he is not;
%      \hfill($H_0 \vee H_1$)
    \item[2] If Harold is a US citizen,  than he is most probably not a member      of Congress;
      %\hfill ($\delta_1$)
    \item[3] Harold is a member of Congress; %\hfill($\delta_2$) 
%    \item[4] $P(DATA+\mid H_0)=$ low \hfill
    \item[$\therefore$] Harold is likely not a US citizen. \end{itemize}
Whilst assumptions 1-3 are all true,  the conclusion is absurd, since  being a US citizen is a necessary condition to sit in the US Congress. And yet it would be arrived at through (a kind of) modus tollens. \end{example}

Those who use it take the argument in Example \ref{ex:MT} to be conclusive in showing the fallacious nature of NHST. Note however that the criticism is conclusive only if one assumes that classical logic is the logic against which a pattern of inference can be evaluated. However, as remarked above, classical logic is hardly adequate to capture the ``most probably'' and the ``likely'' which appear in 2 and in
{the conclusion}. Non-monotonic consequence relations, on the contrary can express them, as detailed in Section \ref{sec:MT}. And indeed, if one looks at it from the non-monotonic-logic point of view, a natural conclusion of premisses 1-3 in Example \ref{ex:MT} is that \emph{Harold is not a typical US citizen}.

Finally a remark on terminology. Statisticians and methodologists go to great lengths to distinguish significance tests from the associated decision of either ``retaining'' or ``rejecting'' the null hypothesis, a procedure made prominent by J. Neyman and E. Pearson. In this latter approach one fixes, before running the relevant experiment, a given threshold for rejection, usually referred to as ``critical region'' and denoted by $\alpha$. If, conditional on $H_0$, the observations fall within this region, then $H_0$ is rejected. However, the noticeable mathematical commonalities between the Fisher and the Neyman-Pearson takes on the problem, lead to the mishmash of the two methodologies under the heading NHST, see \cite{Chow1996} for a comprehensive analysis. In what follows we can be rather casual about this internal divide within the classical statistical tradition,  because none of our results below is affected by committing to either of the competing views.

\subsection{Strong Inference}\label{sec:SI}
The method of \emph{strong inference} was put forward in 1964 \emph{Science} editorial by mathematician and biophysicist John Platt \cite{Platt1964}. Ever since it has been argued, especially in the life sciences, to provide a methodological guidance to hypothesis testing for working scientists \cite{Kinraide2003} and indeed a sound generalisation of NHST \cite{Bookstein2014}.

Platt offers the following schematic representation of strong inference in \cite{Platt1964}:
\begin{quote}
\begin{enumerate}
    \item[1)] Devising alternative hypotheses; 
    \item[2)] Devising a crucial experiment (or several of them), with alternative possible outcomes, each of which will, as nearly as possible, exclude one or more of the hypotheses; 
    \item[3)] Carrying out the experiment so as to get a clean result;
    \item[1')] Recycling the procedure, making subhypotheses or sequential hypotheses to refine the possibilities that remain; and so on. 
    \end{enumerate} 
  \end{quote}
  According   to the author, the distinctive trait of {strong inference} lies in the fact that it requires  scientists to produce as many competing hypotheses as possible in the given context. This gives rise to a rich tree of alternatives. As strong inference proceeds, the tree is pruned by  (the logical consequences of) observations, i.e. outcomes of  ``crucial experiments". The process runs until, in the best possible scenario, a single hypothesis survives.

  Strong inference appears to have attracted very little interest from the logical community. However,  we suggested in \cite{Baldi2023} that, owing to the conceptual analogies with the  {Ulam-R\'{e}nyi} game, introduced by  Alfr{\'e}d R{\'e}nyi \cite{Renyi1961problem}  and Stanislaw Ulam \cite{ulam1991adventures},  strong inference can give us useful cues about the desirable properties of a consequence relation formalising data-driven inference.

\subsection{Non-monotonic consequence relations}\label{sec:nm} We will be using several symbols for related but distinct consequence relations. As usual, $\models$ denotes classical (propositional) consequence, and $\equiv$ classical equivalence. Then we use $\nm$ to denote an arbitrary non-classical consequence relation.

Given the revisable nature of scientific inference, our logical framework is closely related to the family of non-monotonic logics. Those originated in the 1980s, mainly within the artificial intelligence community, for knowledge representation and reasoning purposes, see \cite{Marquis2020} for a recent and broad appraisal with a comprehensive bibliography. After a couple of decades since the first proposals, the community of non-monotonic logicians converged on three important logical systems: \emph{cumulative} System C, the \emph{preferential} System P, and the \emph{rational} System R. Table \ref{tab:KLM} presents the latter.

\begin{table}[h!]
  \centering
  \begin{tabular}{cc}
    \infer[\text{(REF)}]{\f\q\f}{}  &\infer[\text{(RWE)}]{\f\q\xi} {\f \q \p & \p\models\xi} \\[2.0em]
    \infer[\text{(LLE)}]{\p\q\xi}{\f\q \xi & \f\models\p &\p\models\f} & 
                                                                         \infer[\text{(CMO)}] {\f\land\xi \q \p}{\f\q \p & \f\q\xi } \\[2.0em]
    \infer[ \text{(AND)}]{\f\q \p\land\xi} {\f\q \p & \f\q\xi}  & \infer[\text{(OR)}]{\f\lor                                                           \p\q \xi}{\f\q \xi& \p \q\xi}                                         \\[2.0em]
\multicolumn{2}{c}{\infer[\text{(RMO)}]{\f\land\xi\nm\p}{\f\nm\p & \f\not\nm\neg\p}}\\[2.0em]
   \end{tabular}
\caption{System R}
\label{tab:KLM}
\end{table}

System C and System P are two subsets of System R. %\footnote{\paolo{Mi confondeva un po' qui a prima lettura l'uso di former e latter (pensavo avesse a che fare con System R).Ho modificato, anche se è un po' meno elegante}} 
{System C} %The former 
is defined by (REF), (LLE), (RWE), and (CMO), whereas {System P} % the latter 
also satisfies (AND) and (OR). %\e{Note that (AND) is a derived rule in System C}. 
In the interest of readability,  we do not introduce distinct symbols for the three consequence relations characterizing those systems. %System C, System P, and System R.

\subsection{Plan of the paper} Section \ref{sec:quantitative} sets out the intended semantics for consequence relations based on ``degrees of rejection''. Its formalisation, denoted by $\rj$, will constitute a blueprint for defining consequence relations which capture specific aspects of this. Section \ref{sec:prj} investigates $\prj$ and $\arj$ which are based on probabilistic rejection and are closely related to maximum likelihood and NHST, respectively.
% is closely related to NHST. %, and is showed to satisfy all the properties of System P with the exception of {(OR)}. 
%This, as we will argue, owes to the probabilistic nature of semantics underpinning $\prj$. 
 Then, building closer ties with USI games, we introduce and investigate $\urj$ in Section \ref{sec:rrj}. This is shown 
 %to be a rational consequence relation in the sense of Table \ref{tab:KLM}, which in addition 
 satisfy (\umon),  a form  of constrained monotonicity which, to the best of our knowledge, has not been investigated before. Theorem \ref{thm:iminimal}, the main result of this paper, establishes a form of completeness for $\urj$. Section \ref{sec:conclusion} summarises our contributions and lists future works on this topic.

\section{The blueprint RJ-consequence}
\label{sec:quantitative}

The key contribution of this paper is a proposal for the formalisation of consequence relations capturing a central feature of scientific inference: rejecting certain hypotheses in the light of data. Doing this requires inevitable abstraction so,  to pin down the intended semantics of the consequence relations of interest, we first strip down our problem of many of the details encountered in scientific practice. This, we submit, leaves us with three central features of data-driven inference:  (i) There is a distinction between data and hypotheses, but it pertains to the attitude scientists have towards the statements representing them, rather than the nature of the statements themselves; (ii) experiments (the data-generating processes) can be repeated, and  may lead to non-unique outcomes; (iii) When making data-driven inferences, scientists can rely on knowledge that is not subject to questioning by the experiment at hand. Since Definition \ref{def:degrej} below captures the interaction of those three features, let us first motivate their desirability in the light of data-driven inference. While doing so, we will also introduce some terminology and notation.

To tackle the subtleties involved in  (i) we work with a language composed of two (finite) sets of propositional variables $\H = \{H_1,\dots, H_n \}$ and $\D = \{{d_1,\dots,d_l }\}$, standing for  \textit{hypotheses} and \textit{data}, respectively. We do not assume that the sets are disjoint, and sometimes we require them not to be, as in Proposition \ref{prop:rjaddrules} below. We denote by $\fm_{\H}$  and  $\fm_\D$ the set of propositional formulas generated by closing  $\H$  and $\D$, respectively, under the usual  connectives $\land,\lor,\neg$.  The set of all formulas is denoted by $ \fm $, i.e.  $Fm= \fm_{\D}\cup \fm_{\H}$. We denote elements of $\fm$ by lowercase Greek letters. 

The key element in our construction is  \textit{the degree to which data $\delta$ $\in \fm_{\D}$ rejects a hypothesis $\phi \in \fm_{\H}$}.   As will be clear in Section \ref{sec:prj} and in Section \ref{sec:rrj}, detailing the specific properties of degrees of rejection will lead to distinct consequence relations. This is why we refer to the Definition \ref{def:degrej} below as a \textit{blueprint} consequence relation.

To motivate (ii) recall that one key feature in the complicated process that leads to data-driven scientific knowledge  is the ``replicability'' of the tests or experiments which engender it. The relevance of this for the intended semantics of the consequence relations we are about to construct is that the same hypothesis may be confronted multiple times by the same data. A textbook situation in which this happens is when sampling with replacement from an urn. But of course, concrete cases of scientific inference will not be so easy to describe (mathematically). Moreover, scientists may make mistakes in the experimental setup, in the data analysis, or at any other point of the procedure. We accommodate those heterogeneous cases centring Definition \ref{def:degrej} on  the degree of rejection that a \textit{multiset} of data $\Delta$ from $\fm_{\D}$ puts forward against hypothesis $\phi\in \fm_{\H}$. 

We denote by $\multid$ the set of multisets built over the formulas in $\fm_\D$. Henceforth, abusing notation we will use curly brackets for denoting both a multiset and a set of elements\footnote{Recall that the set $\mathcal{M}_S$ of multisets built over a set $S$ can be more formally identified with the set $\mathbb{N}^S$ of functions from $S$ to $\mathbb{N}$, where $f\in \mathbb{N}^S$ stands for the multiset where each $s\in S$ occurs $f(s) = n$ times.}.  

 Finally, to motivate (iii), note that in scientific inference data is not \textit{certain}. This is one reason why the replicability of experimental setups is of critical importance. However, since we are putting forward a logical framework, we must provide our ideal and logically omniscient scientists with knowledge about the problem at hand which is \textit{not} revisable as a consequence of the experiment. To capture this we will assume that data-driven inference is performed under the assumption that the hypotheses in $\H = \{H_1,\dots, H_n \}$ are mutually exclusive and exhaustive. We capture this through the {following} classical formula $T$
 
 \[ {T : = \bigvee_{H \in \H} H \ \land \  
\bigwedge_{H,H' \in \H, \ H \neq H' }H \ra \neg H' .}\]

We are now ready to provide the blueprint definition of the degree $r$ to which data reject a hypothesis. Section \ref{sec:prj} and \ref{sec:rrj} will investigate two special cases which arise by specifying distinct ways of computing $r$.
 %, as well the information that the hypotheses whose degree of rejection is $\infty$ are false. 

\begin{definition}\label{def:degrej}
%\paolo{Let $\H\subseteq \fm_\H$}. 
We say that $r\colon \multid \times \H \to  \mathbb{R}_+ \cup \{\infty\}$ is a \emph{degree of rejection} if, for any $\c,\d\in\fm_\D$, $\Delta\in \multid$ and $H\in\H$, the following hold:
\begin{enumerate}
\item If $T,\c\models \d$,  then $r_\c(H)\geq r_\d(H)$.  
%\item %\paolo{If $\d \in \fm_H$, then} 
 \item %$\displaystyle{
     If  $T, H \models \d$ then $r_{\delta}(H)= 0$.
     \item  If $T, \delta \models \neg H$, then $r_\d(H)>0$ .
%   \footnote{\esther{Quindi se $T, H_1 \models \neg \d$ e $T, H_2 \models \neg \d$, potrei avere $0<r_{\delta}(H_1)<r_{\delta}(H_2)<\infty$ con rejection degrees differenti.}}
%\item If $H\models\neg \d$ then $r_{\delta}(H)\geq r_\delta(H')$ for each $H'\in\H$
   %\end{cases}} 
    %,\text{and}$ %1       &\, \text{ if }  H \models\neg \d, \\
 %   \item[] 
  \item If $r_\Delta {(H)}\not= \infty$, then $r_\Delta(H)=
  \sum_{\delta\in\Delta} r_\delta(H) $.
  \end{enumerate}
  \end{definition}
  When no confusion is likely to arise, we  simply speak of $r_\Delta$ as \lq\lq the degree of rejection" and denote it by $r$. 

Proposition \ref{prop:rl} below shows that the definition is well-posed. In fact the remainder of this paper provides examples of functions which satisfy Definition \ref{def:degrej} and explores how they provide the semantics for distinct consequence relations.

Part (1) of the definition states that, modulo $T$, logically stronger data provide a higher degree of rejection. If we do not insist on limiting our inferences to satisfiable data, this implies that observing contradictions provides the maximal degree of rejection for any hypothesis in $\fm_{\H}$. This is at odds with the scientific practice where we may observe a contradiction between the data and a hypothesis, but we typically do not observe contradictory data. As a consequence, we will tacitly assume that the elements of $\fm_{\D}$ are satisfiable even if the formalism does not require us to do so. 

Conditions (2) and (3) link degrees of rejection to classical logic. The former states that no positive degree of rejection can be provided when one observes a logical consequence, modulo $T$, of a given hypothesis. (3) states that a degree of rejection cannot be zero whenever the data, modulo $T$, entail the negation of the hypothesis. This is where Definition \ref{def:degrej} marks a genuine departure from the traditional idea that the relation between observations and hypotheses is governed entirely by classical deduction, and in particular through \textit{modus tollens}. Whilst it may happen, in special cases, that   $r_\d(H)=1$ for data and hypotheses such that  $\delta \models \neg H$, this need not be true in general and $r$ can take any positive value up to $\infty$. This departure from classical logic is motivated, as noted in the introductory section of this work, by the fact that data obtained in any experimental setup may translate into incomplete, scarce, noisy, or otherwise imperfect evidence. Condition (3)  ensures that the  semantics of data-driven inference accommodates this central feature of scientific inference at root.

Finally, condition (4) states that the degree to which a hypothesis is rejected by data is computed as the sum of the degrees of rejection obtained trough each single piece of data in the multiset  $\Delta$. As a justification for aggregating with the sum, as opposed to any other function, at present we can offer its simplicity and the interesting consequences it leads to. Note however that additive aggregations are used across many (distinct) formalisms in uncertain reasoning, including  the sums of ``uncertainties'' in Adams's probabilistic logic \cite{Adams1998}, and the sums of ``risks'' in Giles's game-theoretic foundation of many-valued logic \cite{Giles1982}.

The next step towards the intended semantics of our blueprint consequence relation revolves around the set 
$\maxh_\Delta$ of hypotheses in $\H$ which are \emph{least rejected} by the data in $\Delta$:  
\begin{equation}\label{eq:argmin}
\maxh_{\Delta} = \argmin_{H\in \fin{\Delta}} r_\Delta(H)  .
\end{equation}
where $\fin{\Delta}$ is the set of hypotheses with a finite degree of rejection. 
%\mathbb{R}_+finite. 
The rationale behind least-rejection is that the elements of $\maxh_{\Delta}$ are  those hypotheses which ``did best against data'' in $\Delta$.

\begin{remark}\label{rem:empty}
	 $\maxh_{\Delta}$ is empty only when there is no hypothesis with a finite rejection degree, i.e. if  {$r_\Delta(H) =\infty$} for each $H\in \H$.%, for some $\delta\in \Delta$.
\end{remark}

We are now ready to define the blueprint consequence $\rj$ as classical model-preservation under least-rejection. 
%REPLY: we removed the verbal explanation, which we realised would not be useful here.

%In words, 
%\paolo{ data entail a conclusion under the $\rj$  consequence, if and only if the formula in the conclusion holds under all the hypothesis that are least rejected by the premises expressing the data.}
%$\rj$ takes data as premisses and returns, for conclusion, a formula expressing the hypotheses which have been least rejected by the data.

\begin{definition}[RJ-consequence]\label{def:Rj}
Let $\Delta$ be a multiset of formulas in $\fm_\D$ and $\f\in \fm_\H$.  We say that $\f$ is an \emph{RJ-consequence of $\Delta$}, written $\Delta\rj\f$, if $T, H\models \f$, for each $H\in \maxh_\Delta$.
\end{definition}

The instantiations of the blueprint consequence to be investigated in this paper will make explicit the nature of the ordering among rejected hypothesis which makes the minimality featuring in \eqref{eq:argmin} precisely defined. But readers who are familiar with the preferential variety of non-monotonic logic will realise immediately from Definition \ref{def:degrej} that our blueprint belongs to that family.

As an immediate consequence of Definition \ref{def:Rj} and Remark \ref{rem:empty} we have that $\rj$ is explosive in the sense that if there are no least-rejected hypotheses, then all hypotheses are rejected by data.

\begin{lemma}\label{prop:rj-explosive}
	Suppose $\maxh_{\Delta} = \emptyset$ . Then $\Delta\rj\f$ for any {$\f\in\fm_\H$}.
\end{lemma}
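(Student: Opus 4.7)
The plan is to unpack Definition \ref{def:Rj} and observe that the condition defining $\Delta \rj \f$ is a universal statement quantified over the set $\maxh_\Delta$. Specifically, $\Delta \rj \f$ holds precisely when $T, H \models \f$ for every $H \in \maxh_\Delta$. Since this is a universal quantification, its truth reduces to whether any counterexample exists.

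First, I would recall, via Remark \ref{rem:empty}, the precise circumstance under which $\maxh_\Delta = \emptyset$: namely, that $r_\Delta(H) = \infty$ for every $H \in \H$, so that $\fin{\Delta}$ is empty and the $\argmin$ in \eqref{eq:argmin} is taken over the empty set. This is the hypothesis of the lemma, and nothing further needs to be extracted from it.

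Next, I would simply observe that if $\maxh_\Delta = \emptyset$ then the condition ``$T, H \models \f$ for each $H \in \maxh_\Delta$'' is vacuously satisfied, since there is no $H \in \maxh_\Delta$ that could serve as a counterexample. By Definition \ref{def:Rj} this gives $\Delta \rj \f$, and since $\f \in \fm_\H$ was arbitrary, the conclusion follows.

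There is no real obstacle here: the statement is a direct consequence of the definition of $\rj$ combined with the convention that universal quantification over an empty domain is vacuously true. The only thing worth flagging in the write-up is the interpretive point that this ``explosive'' behaviour matches the intuitive reading of the definition, namely that when no hypothesis survives with finite rejection degree, the data should be regarded as refuting \emph{every} hypothesis, so any $\f \in \fm_\H$ qualifies as an RJ-consequence.
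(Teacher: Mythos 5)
Your proposal is correct and matches the paper's treatment: the paper states the lemma as an immediate consequence of Definition \ref{def:Rj} and Remark \ref{rem:empty}, offering no further proof, and the intended justification is exactly the vacuous satisfaction of the universally quantified condition over the empty set $\maxh_\Delta$ that you spell out.
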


%Our first results compare  $\rj$ with \textit{cumulative consequence relations}, aka System C. Specifically, 
Proposition \ref{lemma:rj-soundness} shows that $\rj$ satisfies  (multiset versions of) the rules (RWE), (LLE), and (AND), whereas  Proposition \ref{genmon} shows that $\rj$ fails, desirably, unconstrained monotonicity. Finally, Proposition \ref{prop:rjaddrules} shows that $\rj$ satisfies the rules of  rational consequence relation with the important exception of (OR), which will be  discussed later.

\begin{table}[h!]
\centering
\begin{tabular}{ccc}
%\infer[\text{(REF)}]{\f\rj\f}{} & 
\infer[\text{(RWE)}]{\Delta \nm \p} {\Delta \nm \f & &\f\models\p} & \\[2.0em]
    \infer[\text{(LLE)}]{\Delta,\d\nm\f}{\Delta,\c\nm \f & &\models\c\leftrightarrow\d }  & 
     \infer[\text{(AND)}]{\Delta \nm   \f \land \p } {\Delta\nm  \f & & \Delta \nm  \p}   \\[2.0em] 
% \infer[\text{(AND)}]{\Delta \rj   \f \land \p } {\Delta\rj  \f & & \Delta \rj  \p}   \\[2.0em]
 
%\infer[(cMon)^*]{\Delta,\f\rj_\a \p}{ \Delta_1\rj_{\a_1} \p & \Delta_2 \rj_{\a_2} \xi } & 
%\infer[(cMon)^*]{\Delta,\f\rj_\a \p}{ \Delta\rj_\a \p & T_\Delta, \f\not\models\neg \p } & 
%\infer{excl}{H_i \rj \neg H_j }{}
%\infer[{(RW)^*}]{\f\rj_\a\xi} {\Delta \rj_\a \p & \p,T_\Delta \models\xi}
% \\[2.0em]
%\infer[(cMon)^g]{\Delta,\f\rj_{\a_1\odot\a_2} \p}{\Delta\rj_{\a_1}\f & \Delta\rj_{\a_2} \p  } 
%\infer[(\land l )]{\Delta,\f\land\xi\rj \p } {\Delta,\f,\xi \rj\p & &\neg\f,\neg \xi \models \bot}
%\infer[(\lor)]{\Delta,\f\lor\xi \rj    \p } {\Delta, \f\rj  \p & \Delta,\xi \rj  \p}  &
%\infer[(\neg)] {\Delta \rj_{1-\alpha}  \neg H_i } {\Delta\rj_{\alpha} H_i}  \\[2.0em]
%\infer[(\subseteq)_{I\subseteq I'}]{\Delta\rj_{I'}\f}{\Delta\urj \f} & %\infer[mon]{\Delta}{}
%\\[2.0em]& T_\Delta, \f\not\models\neg \p 
\end{tabular}
\caption{System $RJ$}\label{tab:mr}
\end{table}

%
%\begin{table}[h!]
%\centering
%\begin{tabular}{ccc}
%\infer[\text{(REF)}]{\f\q\f}{}  &\infer[\text{(RWE)}]{\f\q\xi} {\f \q \p & \p\models\xi} \\[2.0em]
%    \infer[\text{(LLE)}]{\p\q\xi}{\f\q \xi & \f\models\p &\p\models\f} & 
%                                                                         \infer[\text{(CMO)}] {\f\land\xi \q \p}{\f\q \p & \f\q\xi } \\[2.0em]
%
%  \infer[ 	\text{(AND)}]{\f\q \p\land\xi} {\f\q \p & \f\q\xi}  & %\infer[(OR)]{\f\lor                                                               \p\q \xi}                                                               {\f\q \xi                                            & \p \q\xi}
%  \\[2.0em]
%%\multicolumn{2}{c}{\infer[\text{(rMon)}]{\f\land\xi\nm\p}{\f\nm\p & \f\not\nm\neg\p}}\\[2.0em]
%\end{tabular}
%\caption{System  $Rj$}\label{tab:mr}
%\end{table}
 \begin{restatable}{proposition}{Firstprop}\label{lemma:rj-soundness}
RJ-consequence satisfies the rules in Table \ref{tab:mr}.
\end{restatable}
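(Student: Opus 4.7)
The plan is to verify the three rules by unpacking the definition of $\rj$ in terms of least-rejected hypotheses, and observing that (RWE) and (AND) are inherited directly from classical $\models$, while (LLE) reduces to a symmetry property of the degree-of-rejection function.

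\textbf{(RWE) and (AND).} I would first dispose of these two by pure unfolding. Suppose $\Delta \rj \f$, i.e., $T, H \models \f$ for every $H \in \maxh_\Delta$. For (RWE), if $\f \models \p$, then by transitivity of $\models$ we get $T, H \models \p$ for every such $H$, so $\Delta \rj \p$. For (AND), if additionally $\Delta \rj \p$, then $T,H\models \f$ and $T,H\models \p$ for every $H\in\maxh_\Delta$; hence $T,H\models \f\wedge\p$, giving $\Delta \rj \f\wedge\p$. In the degenerate case $\maxh_\Delta=\emptyset$, Lemma~\ref{prop:rj-explosive} yields the conclusion vacuously.

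\textbf{(LLE).} The key step is to show $\maxh_{\Delta,\c}=\maxh_{\Delta,\d}$ whenever $\models \c\leftrightarrow \d$, because then for any $H\in\H$ and any $\f\in\fm_\H$ the statements $\Delta,\c\rj \f$ and $\Delta,\d\rj \f$ are literally the same. From $\models \c\leftrightarrow\d$ one has both $T,\c\models\d$ and $T,\d\models\c$, so applying clause (1) of Definition~\ref{def:degrej} twice yields $r_\c(H)\geq r_\d(H)$ and $r_\d(H)\geq r_\c(H)$, whence $r_\c(H)=r_\d(H)$ for every $H\in\H$. Now I would argue that this pointwise equality lifts to the aggregated degree: if $r_{\Delta,\c}(H)\neq \infty$, clause (4) gives
\[
r_{\Delta,\c}(H)=\sum_{\delta'\in\Delta}r_{\delta'}(H)+r_\c(H)=\sum_{\delta'\in\Delta}r_{\delta'}(H)+r_\d(H)=r_{\Delta,\d}(H),
\]
and by the symmetric argument the two are simultaneously infinite or simultaneously equal finite values. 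Consequently $\fin{\Delta,\c}=\fin{\Delta,\d}$ and the two $\argmin$ sets coincide, yielding $\maxh_{\Delta,\c}=\maxh_{\Delta,\d}$ and hence (LLE).

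\textbf{Main obstacle.} The only delicate point I anticipate is the interaction of clause (4) with the value $\infty$: the clause only prescribes additivity when $r_\Delta(H)$ is finite, so one has to argue separately that infinite rejection degrees on $\c$ and $\d$ match between the two multisets. However, since $r_\c(H)=r_\d(H)$ is a pointwise equality on individual formulas and $\maxh$ only selects hypotheses in $\fin{\cdot}$, this case is handled by the observation that a finite aggregate requires finite contributions from every singleton in the multiset, so substituting equal-valued singletons preserves the finiteness status. Once this is spelled out, the proof reduces to the bookkeeping sketched above.
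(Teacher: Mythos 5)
Your proposal is correct and follows essentially the same route as the paper's own proof: (RWE) and (AND) by unfolding Definition~\ref{def:Rj} and using transitivity/conjunction-introduction of $\models$, and (LLE) by showing $r_\c(H)=r_\d(H)$ for all $H$ and concluding $\maxh_{\Delta,\c}=\maxh_{\Delta,\d}$. If anything you are slightly more careful than the paper, which cites clause (2) of Definition~\ref{def:degrej} for (LLE) where the relevant clause is (1) as you use it, and which does not spell out the aggregation/infinity bookkeeping you flag.
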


See the Appendix for the proof. 

%, with $\tau_\Delta(H_i)\in I$. Hence, $T_\Delta, H\models\f\land\p$ for any $H\in \maxh_{\Delta}$, that is, $\Delta\rj\f\land\p$.

% and let $H \in \maxh_{\Delta}$. First, note that, since $\f$ occurs both on the right and the left hand side of $\rj$, we can only have $\f\in\fm_\H$

%By the premise $\Delta\rj\f$, we further have $H\vdash \f$. But this means that $\f\not\vdash\neg H$. Hence $r_\f(H)=0$ and thus $r_{\Delta,\f}(H) = r_\Delta$. But this means that $H\in \maxh_{\Delta,\f}$, hence by the premise $\Delta,\f\rj\p$, we finally have $H\vdash\p$. Hence, we have shown that for any $H\in \maxh_{\Delta}$, we have $H\vdash \p$, that is, $\Delta\rj\p$.

It is well-known \cite{Makinson2005,Makinson2012,Hawthorne2007} that (non-monotonic) logical systems satisfying (AND) provide   \textit{qualitative} representations of reasoning under uncertainty. This may appear to be at odds with the ``numerical'' semantics provided by degrees of rejection. However this is not sufficient to make $\rj$ quantitative. In fact the magnitude of  degrees of rejection are immaterial in  \eqref{eq:argmin}, where only comparisons of such degrees are relevant.

Next, we ensure that $\rj$ does not satisfy  unconstrained monotonicity (MON).  Since our consequence relations are defined on multisets of premisses, two distinct formulations of unconstrained monotonicity must be taken into account: 
\[\infer[\text{(AMON)}]{\c\land\d\nm \f}{\c\nm\f} \qquad \infer[\text{(MMON),}]{\Delta,\Delta'\nm \f}{\Delta\nm\f}\] where, as usual, $\nm$ stands for an arbitrary consequence relation. 

 \begin{restatable}{proposition}{Secondprop}\label{genmon}
	Neither \emph{(AMON)} nor \emph{(MMON)} hold for $\rj$.
\end{restatable}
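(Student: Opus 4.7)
The plan is to produce a single minimal example that refutes both monotonicity rules at once by engineering a data-induced flip in the least-rejected hypothesis. I would work with $\H = \{H_1, H_2\}$, so $T$ simply asserts that $H_1$ and $H_2$ are mutually exclusive and exhaustive, and choose two data formulas $\gamma, \delta \in \fm_\D$ built over atoms disjoint from the hypothesis atoms, in such a way that $\gamma$, $\delta$, and $\gamma \land \delta$ are each satisfiable in conjunction with either $H_1$ or $H_2$. This guarantees that conditions~(2) and~(3) of Definition~\ref{def:degrej} place no constraint on the values $r$ takes on these formulas, so I may freely prescribe
\[
r_\gamma(H_1) = 0,\ r_\gamma(H_2) = 2,\ r_\delta(H_1) = 3,\ r_\delta(H_2) = 0,\ r_{\gamma\land\delta}(H_1) = 3,\ r_{\gamma\land\delta}(H_2) = 2.
\]
These values are compatible with axiom~(1) because $r_{\gamma\land\delta}(H_i) = \max\{r_\gamma(H_i), r_\delta(H_i)\}$, and they are trivially compatible with~(4) on singleton multisets.

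For (MMON), take $\Delta = \{\gamma\}$, $\Delta' = \{\delta\}$, and $\phi = H_1$: since $r_\gamma(H_1) < r_\gamma(H_2)$, we have $\maxh_\Delta = \{H_1\}$, so $\Delta \rj H_1$; but axiom~(4) forces $r_{\Delta\cup\Delta'}(H_1) = 3$ and $r_{\Delta\cup\Delta'}(H_2) = 2$, so $\maxh_{\Delta\cup\Delta'} = \{H_2\}$, and since $T, H_2 \not\models H_1$ we conclude $\Delta, \Delta' \not\rj H_1$. For (AMON) the same $\phi = H_1$ works with the single-formula premise $\gamma$: we already have $\gamma \rj H_1$, while the prescribed value of $r_{\gamma\land\delta}$ yields $\maxh_{\gamma\land\delta} = \{H_2\}$, whence $\gamma \land \delta \not\rj H_1$.

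The only delicate point is ensuring that the prescribed numerical values extend coherently to a globally defined rejection function on all of $\fm_\D$ satisfying all four clauses of Definition~\ref{def:degrej}. I would resolve this by grounding $r$ in a likelihood-style recipe of the form $r_\alpha(H) = -\log P_H(\alpha)$ for suitable conditional distributions $P_{H_1}, P_{H_2}$ over $\fm_\D$ — exactly the kind of instance validated by Proposition~\ref{prop:rl} and studied in Section~\ref{sec:prj}. There one simply tunes $P_{H_i}(\gamma)$ and $P_{H_i}(\delta)$ to hit the target values, and all four axioms hold automatically.
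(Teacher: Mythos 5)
Your proposal is correct and follows essentially the same route as the paper's proof: a two-hypothesis counterexample in which prescribed rejection values make $\maxh$ flip from $\{H_1\}$ to $\{H_2\}$ (or to a tie) once $\delta$ is conjoined or added to the multiset. If anything you are more careful than the paper, which simply asserts that the required values "may" occur without checking that a rejection function realising them exists; your likelihood-style grounding closes that small gap.
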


See the Appendix for the proof.

%cautious monotonicity
In preparation for the last proposition of this Section, which pins down the conditions under which reflexivity, rational monotonicity and cut are satisfied by $\rj$, recall that we work with distinct languages for data and hypotheses. Since Proposition \ref{lemma:rj-soundness} involves only rules which do not feature individual formulas on both sides of $\rj$, the proof carries through even if the two languages are disjoint. This possibility, however, must be ruled out for  $\rj$ to satisfy the rules of the next Proposition. Moreover we must set to infinity the rejection degree provided by contradictory data against any hypothesis. Recall also that we denote by  $T$ the formula expressing that hypotheses form a partition.

\begin{table}[h!]
	\centering
	\begin{tabular}{ccc}
\infer[\text{(REF)}]{\f\nm\f}{} & \infer[\text{(RMO)}]{\Delta,\f\nm \p}{ \Delta\nm \p & \Delta\not\nm\neg\f}
 \\[2.0em] \infer[\text{(CUT)}]{\Delta \nm \p}{ \Delta,\f\nm \p & \Delta\nm\f} &
\infer[\text{(CMO)}]{\Delta,\f\nm \p}{ \Delta\nm \p & \Delta\nm\f}
\end{tabular}
\caption{}\label{tab:rjaddrules}

\end{table}

 \begin{restatable}{proposition}{Thirdprop}\label{prop:rjaddrules}
Suppose $\fm_\D\cap\fm_\H \neq \emptyset$. Assume further that if $T\models\neg\delta$ then $r_{\delta}(H) = \infty$ for each $H\in\H$.
Then $\rj$ satisfies the rules in Table \ref{tab:rjaddrules}.
\end{restatable}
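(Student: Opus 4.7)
The plan is to prove all four rules by a single strategy: understand how $\maxh$ changes when we extend the multiset of premises by a formula $\varphi$, and exploit the constraint $T$ together with the additivity clause (4) of Definition~\ref{def:degrej}. The enabling preliminary, which requires $\fm_\D\cap\fm_\H\neq\emptyset$ (so that $\varphi$ may legitimately appear on both sides of $\rj$ in all four rules), is a semantic lemma: for every $H\in\H$ and every $\varphi\in\fm_\H$, the theory $T\cup\{H\}$ is complete over $\fm_\H$, because $T$ forces the variables in $\H$ to form a partition and singles out a unique $H_i$. Combined with the symmetry $T,H\models\neg\varphi\iff T,\varphi\models\neg H$ (both expressing the unsatisfiability of $T\land H\land\varphi$), clauses (2) and (3) of Definition~\ref{def:degrej} then yield the crucial equivalence $r_\varphi(H)=0\iff T,H\models\varphi$ for $\varphi\in\fm_\D\cap\fm_\H$.

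For (REF), I would split on whether $T\land\varphi$ is satisfiable. If $T\models\neg\varphi$, the extra assumption of the proposition forces $r_\varphi(H)=\infty$ for every $H$, hence $\maxh_{\{\varphi\}}=\emptyset$ and Lemma~\ref{prop:rj-explosive} yields $\varphi\rj\varphi$. Otherwise any model of $T\land\varphi$ singles out an $H^{\ast}$ with $T,H^{\ast}\models\varphi$, whence $r_\varphi(H^{\ast})=0$ by clause (2); so the minimum of $r_\varphi$ is $0$, every minimizer $H$ satisfies $r_\varphi(H)=0$, and the equivalence above gives $T,H\models\varphi$, which is precisely what $\varphi\rj\varphi$ demands.

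For (CUT), (CMO) and (RMO) I would read clause (4) additively in $\mathbb{R}_+\cup\{\infty\}$, so that $r_{\Delta\cup\{\varphi\}}(H)=r_\Delta(H)+r_\varphi(H)$ in all cases. Assuming $\Delta\rj\varphi$, every $H\in\maxh_\Delta$ satisfies $T,H\models\varphi$ and hence $r_\varphi(H)=0$, so $r_{\Delta,\varphi}(H)=r_\Delta(H)$; since the quantity added to $r_\Delta(H')$ for any other $H'$ is non-negative, the minimizers cannot migrate out of $\maxh_\Delta$, and a short inequality chase yields $\maxh_\Delta=\maxh_{\Delta,\varphi}$. Both (CUT) and (CMO) follow at once by transferring the consequence across this equality. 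For (RMO) the premise $\Delta\not\rj\neg\varphi$ supplies some $H_0\in\maxh_\Delta$ with $T,H_0\not\models\neg\varphi$, whence $T,H_0\models\varphi$ by completeness and $r_\varphi(H_0)=0$; the same additivity argument delivers $\maxh_{\Delta,\varphi}\subseteq\maxh_\Delta$, and then $\Delta\rj\psi$ transfers to $\Delta,\varphi\rj\psi$. Whenever $\maxh_\Delta=\emptyset$ the conclusion of each rule is recovered vacuously from Lemma~\ref{prop:rj-explosive}, after checking that $\maxh_{\Delta,\varphi}$ remains empty as well.

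The hard part, I expect, will be the careful bookkeeping around the value $\infty$: clause (4) asserts additivity only conditional on finiteness, so I must independently argue $r_{\Delta\cup\{\varphi\}}(H)=\infty$ whenever either $r_\Delta(H)$ or $r_\varphi(H)$ is $\infty$, and use the extra assumption ``$T\models\neg\delta$ implies $r_\delta(H)=\infty$'' to rule out pathologies created by contradictory data among the premises. Once this extension of (4) to $\mathbb{R}_+\cup\{\infty\}$ is secured, each of the four rules reduces to an elementary comparison of the sums $r_\Delta(H)+r_\varphi(H)$ across $H\in\H$.
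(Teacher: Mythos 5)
Your proposal is correct and follows essentially the same route as the paper's proof: the dichotomy that $T\wedge H$ decides every $\varphi\in\fm_\D\cap\fm_\H$ (giving $r_\varphi(H)=0$ iff $T,H\models\varphi$), additivity to show that adjoining a formula not rejected by the minimizers leaves $\maxh$ stable, Lemma~\ref{prop:rj-explosive} for the degenerate cases, and the designated $H_0\in\maxh_\Delta$ witnessing $\Delta\not\rj\neg\varphi$ for (RMO). The only difference is organizational — you extract a single two-sided stability lemma $\maxh_\Delta=\maxh_{\Delta,\varphi}$ covering (CUT) and (CMO) at once, where the paper argues each inclusion separately per rule — and your explicit flag about extending clause (4) across the value $\infty$ is a point the paper's own proof passes over silently.
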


See the Appendix for the proof.

For $\rj$  to be a rational consequence relation it  should also satisfy (OR). It does not, as detailed in Section \ref{sec:invalid-rules-for-prj}, where we also explain why this is indeed desirable.

As a concluding remark on our blueprint consequence relation, note that the \textit{supraclassicality} of  $\rj$ is an immediate corollary of Proposition \ref{prop:rjaddrules} by concatenating the arguments for (REF) and (RWE).

\begin{corollary}
Let $\f,\p\in\fm_\D\cap\fm_H$. If $\f\models\p$ then $\f\rj\p$.	
\end{corollary}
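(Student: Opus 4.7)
The plan is to chain reflexivity with right-weakening, exactly as the concluding remark before the corollary suggests. The hypothesis $\f,\p\in\fm_\D\cap\fm_\H$ is tailored precisely so that both formulas can legitimately appear on both sides of $\rj$: $\f$ needs to sit in $\fm_\D$ to appear as a singleton multiset of premisses, while $\p$ (and also $\f$, in the first step) needs to sit in $\fm_\H$ to appear as the conclusion of an RJ-consequence. Since the corollary assumes $\fm_\D\cap\fm_\H$ is nonempty and contains both $\f$ and $\p$, the hypotheses of Proposition \ref{prop:rjaddrules} are in place.

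First I would apply (REF) from Proposition \ref{prop:rjaddrules} to the formula $\f$, obtaining $\f\rj\f$. Then, using the classical entailment $\f\models\p$ given in the statement, I would apply the rule (RWE) from Proposition \ref{lemma:rj-soundness} (with $\Delta=\{\f\}$) to pass from $\f\rj\f$ to $\f\rj\p$. This is the desired conclusion.

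There is no real obstacle: the argument is a one-line concatenation of two rules already established. The only thing worth checking is the side condition on degrees of rejection assumed in Proposition \ref{prop:rjaddrules} (namely, that $r_{\delta}(H)=\infty$ whenever $T\models\neg\delta$); but since this is the standing assumption under which (REF) is derived, it propagates to the corollary without further work.
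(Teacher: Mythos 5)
Your proof is correct and matches the paper's own argument exactly: the paper derives this corollary precisely by concatenating (REF) from Proposition \ref{prop:rjaddrules} with (RWE) from Proposition \ref{lemma:rj-soundness}, using the hypothesis $\f,\p\in\fm_\D\cap\fm_\H$ to license $\f$ on both sides of $\rj$. Your observation that the standing assumptions of Proposition \ref{prop:rjaddrules} (nonempty intersection of the languages and infinite rejection for data contradicting $T$) carry over to the corollary is also the right reading of the text.
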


\section{Data-driven probabilistic rejection}
\label{sec:prj}
This section investigates two ways of defining  degrees of rejection through data-driven probabilities. The first is  discussed in Section \ref{sec:likelihood} and captures the core idea of maximum likelihood inference. The second is discussed in  Section \ref{sec:p-value} and is closely related to significance inference and NHST. Each gives rise to a distinct consequence relation. We argue that both capture interesting aspects of statistical inference.

Recall that a \textit{probability function} on $\fm$ is a map
	\makebox{$P:\fm \rightarrow [0,1]$} which satisfies normalisation and additivity, i.e.
	\begin{enumerate}
		\item If $\models \phi$ then $P(\phi)=1$ and
		\item  If $\models \neg (\phi \wedge \theta)$ then $P(\phi\vee\phi)=P(\phi)+P(\theta)$.
	\end{enumerate}

Throughout this section we will work with  \textit{conditional} probability functions of $\fm$, i.e. $P:\fm \times \fm^+ \rightarrow [0,1]$ defined by
\begin{equation}\label{eq:conditional}
P(\d\mid H)=\frac{P(\d\wedge H)}{P(H)},
\end{equation}
where $\fm^+$ denotes the set of elements of $\fm$ whose probability is strictly positive. Formally, this restriction  guarantees we do not divide by 0. Conceptually, it prevents bizarre applications of statistical hypothesis testing where the hypotheses being tested are known to be false. 

Functions defined as in \eqref{eq:conditional} above are  normalised on tautologies and finitely additive. That is, they satisfy:
\begin{equation}\label{eq:additivity-conditional}
	\text{If } \models \neg (\d\wedge \c) \text{ then } P(\neg\d\lor \neg \c\mid H) = P(\d\mid H) + P(\c \mid H).
	\end{equation}
%P(\neg\d\mid H) + P(\neg\c \mid H).
To express the condition to the effect that {$\delta$ and $\gamma$} 
%$\delta_1$ and $\delta_2$ 
are incompatible, we will also write $\d,\c \models \bot$, where $\bot$ is any classical contradiction. Finally  the following holds, and will be used extensively below:
\begin{equation}\label{eq:conditional-negation}
	P(\neg \delta \mid H)= 1-P(\delta \mid H). 
\end{equation}

\subsection{\textit{l}RJ-consequence}\label{sec:likelihood}
To illustrate the idea of  likelihood-based rejection, consider $\H = \{H_1,H_2 \}$ and let \begin{equation}\label{eq:s}
	r^l_\delta(H_i) = P(\neg  \delta|H_i)
\end{equation} for  data $\delta\in \fm_{\D}$ and $i\in\{1,2\}$. In this situation $H_1$  belongs to the set of least rejected hypotheses $\maxh_\delta$ if and only if  $r_\delta(H_1) \leq r_\delta(H_2)$. By \eqref{eq:conditional-negation} then  $H_1 \in \maxh_\delta$ if and only if $1 - P(\delta|H_1) \leq 1- P(\delta|H_2)$, i.e. 
	\begin{equation*}
		H_1 \in \maxh_\delta 	\text{ if and only if } P(\delta| H_1) \geq P( \delta|H_2).
	\end{equation*}
So, when $r$ is computed as in \eqref{eq:s}, the set of least rejected hypotheses under $\delta$  pins down the set of hypotheses which maximise  likelihood. Hence we call $r^{l}$ the \textit{maximal likelihood rejection function}.

As above we assume the additive aggregation of the degrees of rejection over multisets of data:
$$r^l_\Delta(H ) = \sum_{\delta\in\Delta} r^l_\delta(H).$$

\begin{proposition}\label{prop:rl}
	$r^{l}_{\delta}(H)$ is a degree of rejection.
\end{proposition}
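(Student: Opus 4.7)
The plan is to verify the four clauses of Definition \ref{def:degrej} directly from the definition $r^l_\delta(H)=P(\neg\delta\mid H)$, using standard monotonicity and additivity properties of the conditional probability function. Throughout I assume $P(T)=1$ (so that classical consequences modulo $T$ translate to probabilistic inequalities), which is the natural reading once the hypotheses in $\H$ are declared exclusive and exhaustive; and $P(H)>0$, which is guaranteed by the convention $H\in\fm^+$ made just before \eqref{eq:conditional}.

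For clause (1), suppose $T,\gamma\models\delta$. Then $P(T\land\gamma\land H)\leq P(T\land\delta\land H)$ by monotonicity, and since $P(T)=1$ this gives $P(\gamma\mid H)\leq P(\delta\mid H)$. Using \eqref{eq:conditional-negation} we get $P(\neg\gamma\mid H)\geq P(\neg\delta\mid H)$, i.e.\ $r^l_\gamma(H)\geq r^l_\delta(H)$. For clause (2), if $T,H\models\delta$ then $P(\delta\mid H)=1$ (as $H$ together with the probability-1 event $T$ entails $\delta$), and so $r^l_\delta(H)=1-1=0$. For clause (3), if $T,\delta\models\neg H$ then $\delta\land H$ is inconsistent modulo $T$, so $P(\delta\mid H)=0$ and hence $r^l_\delta(H)=1>0$.

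Clause (4) is essentially by fiat: the additive aggregation $r^l_\Delta(H)=\sum_{\delta\in\Delta}r^l_\delta(H)$ was the chosen definition in the display preceding the proposition. It remains only to observe that since each summand lies in $[0,1]$ and $\Delta$ is a finite multiset, the sum is always finite, so the premise ``$r^l_\Delta(H)\neq\infty$'' is vacuously true and the equation holds unconditionally.

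The whole argument is routine; I do not expect any real obstacle. The only point worth flagging is the interpretation of ``modulo $T$'' on the probabilistic side; once one fixes the convention $P(T)=1$, clauses (1)--(3) reduce to three one-line applications of monotonicity, of $P(\top\mid H)=1$, and of the incompatibility rule $P(\delta\mid H)=0$ when $\delta\land H\models\bot$, respectively.
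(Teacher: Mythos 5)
Your proposal is correct and follows essentially the same route as the paper's own proof: a direct verification of each clause of Definition \ref{def:degrej} using \eqref{eq:conditional-negation} together with the monotonicity and additivity of conditional probability. The only differences are that you are more careful than the paper on two points it silently glosses over --- making the ``modulo $T$'' reading precise via the convention $P(T)=1$, and explicitly noting that clause (4) holds by definition with the finiteness premise vacuous --- both of which are welcome refinements rather than deviations.
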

\begin{proof}
We must show that $r^{l}$  satisfies parts (1-3) of Definition \ref{def:degrej}. For part (1), recall that  $\gamma\models\delta$ entails $P(\gamma|H)\leq P(\delta|H)$ for all conditional probability functions defined as in \eqref{eq:conditional}. By \eqref{eq:conditional-negation}, {we get $r^l_\delta(H) = P(\neg \delta|H) \leq P(\neg \gamma|H)=r^l_\gamma(H)$, as required.} As for (2) observe that  if $H\models \delta$ then $P(\delta|H) = 1$. By \eqref{eq:conditional-negation} we have $r^l_\delta(H)= P(\neg \delta|H) = 1- P(\delta|H) = 0$, as required. A similar argument delivers (3). For  if $H\models\neg \delta$, we have $r^l_\delta(H)= P(\neg \delta|H) = 1- P(\delta|H) = 1 $, which is greater than 0.
\end{proof}

We are now ready to define $\prj$, which  arises from  $\rj$ by imposing $r^{l}$ as the  degree of rejection. Recall that  $\Delta\rj\phi$ if $\f$ is a classical consequence of $H$ for each minimally rejected hypothesis $H\in \maxh_\Delta$. 

\newpage

\begin{definition}\label{def:prj}
  We say that \emph{$\phi$ is a \prjt-consequence of $\Delta$}, written \mbox{$\Delta\prj\phi$}, if
  \begin{itemize}
  \item[i)]$\Delta\rj\phi$, and
   \item[ii)] $r$ is computed according to \eqref{eq:s}, i.e. $r=\rp.$
  \end{itemize}
\end{definition}

%By inspecting the Definition,  

%Our first result shows that the inferential counterpart of this is that conjunction on the left, i.e.holds for $\prj$. 
 As a consequence of Proposition \ref{prop:rl}, $\prj\supset \rj$. The key difference between $\prj$ and the blueprint $\rj$ lies in the fact that the former satisfies both \textit{conjunction on the left} and its \emph{converse}, namely: 
\[ \infer[\text{(AND)}_l]{\Delta,\d\land\c\nm \f} {\Delta,\d,\c \nm\f & \neg\c,\neg \d \models \bot }\]
%& }
and 
\[ \infer[\text{(AND)}_l^{con}]{\Delta,\d,\c\nm \f} {\Delta,\d\land \c \nm\f & \neg\c,\neg \d \models \bot} \]

while this is not the case in general for $\rj$, as we will show in the next section.

Before proving that $\text{(AND)}_l$ holds for $\prj$, let us argue in favour of its desirability. By inspecting the likelihood-based rejection function $r^l$ \eqref{eq:s} it is apparent that the semantics of $\prj$ builds on ``the probability of negations''. It is therefore the additivity  of conditional probability functions \eqref{eq:additivity-conditional}  which  delivers the validity of this rule.

\begin{proposition}\label{lemma:prj-soundness}
In addition to all the rules in Table \ref{tab:mr} \prjt-consequence relations satisfy also $\emph{(AND)}_l$ and $(\text{AND})_l^{con}$.
\end{proposition}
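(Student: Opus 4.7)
The plan is to observe first that $\prj$ inherits (RWE), (LLE), and (AND) directly from Proposition \ref{lemma:rj-soundness}, since any $\prj$-consequence is an $\rj$-consequence with $r$ specified as $r^l$. Hence the substantive content of the proposition is the validity of $(\text{AND})_l$ and its converse $(\text{AND})_l^{con}$.

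My strategy for both of these is to show a single semantic equality: under the side condition $\neg\delta,\neg\gamma\models\bot$, the multisets $\Delta\cup\{\delta\land\gamma\}$ and $\Delta\cup\{\delta,\gamma\}$ induce the same rejection degree on every hypothesis. The starting point is the elementary fact that $\neg(\delta\land\gamma)\equiv\neg\delta\lor\neg\gamma$. Since the side condition $\neg\delta,\neg\gamma\models\bot$ says exactly that $\neg\delta$ and $\neg\gamma$ are mutually incompatible, I can apply the additivity property \eqref{eq:additivity-conditional} of conditional probability functions to obtain
\[
P(\neg(\delta\land\gamma)\mid H) \;=\; P(\neg\delta\lor\neg\gamma\mid H)\;=\;P(\neg\delta\mid H)+P(\neg\gamma\mid H),
\]
for every $H\in\H$. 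By the definition of the likelihood rejection function $r^l$ in \eqref{eq:s}, this reads $r^l_{\delta\land\gamma}(H)=r^l_\delta(H)+r^l_\gamma(H)$.

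Combining this additivity with clause (4) of Definition \ref{def:degrej} gives, for every $H\in\H$,
\[
r^l_{\Delta\cup\{\delta\land\gamma\}}(H) \;=\; r^l_\Delta(H) + r^l_{\delta\land\gamma}(H) \;=\; r^l_\Delta(H)+r^l_\delta(H)+r^l_\gamma(H) \;=\; r^l_{\Delta\cup\{\delta,\gamma\}}(H).
\]
Consequently the two multisets determine the same set $\fin{\cdot}$ of finitely-rejected hypotheses and the same argmin in \eqref{eq:argmin}, so $\maxh_{\Delta\cup\{\delta\land\gamma\}}=\maxh_{\Delta\cup\{\delta,\gamma\}}$. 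Applying Definition \ref{def:prj} immediately yields $\Delta,\delta\land\gamma\prj\phi \iff \Delta,\delta,\gamma\prj\phi$, which establishes both $(\text{AND})_l$ and $(\text{AND})_l^{con}$ simultaneously.

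There is no real obstacle here; the only delicate point is making sure the incompatibility hypothesis is applied to $\neg\delta$ and $\neg\gamma$ (not to $\delta$ and $\gamma$), so that additivity is invoked for the right pair of formulas and one correctly recovers $\neg(\delta\land\gamma)$ via De Morgan. Once that bookkeeping is right, the argument reduces to a one-line probabilistic additivity computation plus the additive aggregation clause of Definition \ref{def:degrej}.
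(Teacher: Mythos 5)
Your proposal is correct and follows essentially the same route as the paper's proof: both establish $r^l_{\delta\land\gamma}(H)=r^l_\delta(H)+r^l_\gamma(H)$ via De Morgan and the additivity of conditional probability applied to the incompatible formulas $\neg\delta$ and $\neg\gamma$, then conclude that the two multisets induce identical rejection degrees and hence the same set of least-rejected hypotheses, giving both rules at once. No substantive difference.
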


\begin{proof}
%	For the first, in the light of Proposition \ref{prop:andandconv}, we only need to show that $r^l_{\c\land\d}(H)\leq r^l_{\c}(H) + r^l_{\d}(H)$ for any $H\in \H$. 
%This follows immediately from the additivity of conditional probability, since:
%\[r^l_{\c\land\d}(H) = P(\neg\c\lor \neg \d|H) = P(\neg \c|H) + P(\neg\d|H) - P(\neg \c \land \neg \d |H) \leq r^l_{\c}(H) + r^l_{\d} (H)\] 
%
%For the rule $\text{(AND)}_l^{con}$, it suffices to recall that, in case $\neg\c,\neg\d \models \bot$, we get $P(\neg \c \land \neg \d |H)=0$. Hence by the above equation we will obtain 
%$r^l_{\c\land\d}(H) = r^l_{\c}(H) + r^l_{\c} (H)$. In virtue of Proposition \ref{prop:andandconv} this entails that $\text{(AND)}_l^{con}$ holds. 

Suppose $\Delta,\d,\c \nm\f$ and $\neg\d,\neg \c \models \bot$. 
By \eqref{eq:s}, we have $r^l_{\d\land\c}(H) = P(\neg(\d \wedge \c) \mid H)$. The right hand side, by the elementary properties of conditional probability functions equals $P(\neg \d \lor \neg \c \mid H)$. By the second hypothesis of (AND)$_l$  we know that $\neg\d$ and $\neg \c$ are incompatible. Hence, by additivity  $r^l_{\d\land\c}(H) =P(\neg \d\mid H) +  P( \neg \c \mid H)$ which equals $r^l_{\d}(H) + r^l_{\c}(H)$. This entails that for each $H$, we have $r^l_{\Delta,\c,\d}(H) = r^l_{\Delta,\c\land\d}(H)$, hence $\maxh_{\Delta,\c,\d} = \maxh_{\Delta,\c\land\d} $ and thus $\Delta,\c,\d\rj \f$  if and only if $\Delta,\c\land\d\rj \f$. This finally shows that
 $r^l$ satisfies both $(\text{AND})_l$ and $(\text{AND})_l^{con}$.
\end{proof}
%

%Inspection of the proof shows that also the converse $\text{(AND)}_l^{con}$ of $\text{(AND)}_l$ holds for $\prj$:
%\[ \infer[\text{(AND)}_l^{con}]{ \Delta,\d,\c\nm \f} {\Delta,\d\land\c\nm \f &\neg\d,\neg \c \models \bot}. \]

\subsection{(OR) is not valid for $\prj$}\label{sec:invalid-rules-for-prj}
As a consequence of Proposition \ref{prop:rl}, if a rule is not valid for $\prj$, then it is not valid for $\rj$ either. And, as anticipated, a noticeable failure for the blueprint consequence relation is the rule known in the non-monotonic logic literature as \textit{disjunction in the premisses} (OR).  Hence both  $\prj$ and its predecessor $\rj$, fall short of being preferential consequence relations in the sense recalled in Section \ref{sec:nm}. 

Before showing this, let us make explicit that this \emph{is} in fact desirable in light of the intended semantics of the consequence relation(s). Recall that $\prj$ is  likelihood based: the degree to which an observation $\delta$ rejects a hypothesis $H$ is computed as $P(\neg \d \mid H)$. If we were to insist on satisfying (OR) we would need to guarantee that $P(\neg (\delta_i \lor\delta_j)\mid H) = P(\neg \delta_i\land \neg \delta_j\mid H)$  can always be computed in terms of $P(\neg \delta_i \mid H)$ and $P(\neg \delta_j\mid H)$. But it is well known that is cannot be done in general, for (conditional) probability functions are not compositional with respect to conjunction -- see Section 1.1 of \cite{Hosni2023a} for a general discussion.  %\cite{Hawthorne2007,Makinson2012}. 

We show that $\text{(OR)}$ fails for \prjt-consequence by arguing that the  weaker rule (XOR), introduced in   \cite{Hawthorne2007}, fails too:
\[\infer[(\text{XOR})] {\Sigma,\d\lor \c \nm \f.}{\Sigma,\d\nm \f & \Sigma,\c\nm \f & \d,\c\models \bot }  \]

%\e{\[\infer[(\text{XOR})] {\Sigma,\f\lor \p \nm \xi.}{\Sigma,\f\nm \xi & \Sigma,\p\nm \xi & \f,\p\models \bot }  \]}

%\[\infer[(\text{OR})] {\Delta,\psi \lor \c \nm \f.}{\Delta,\psi\nm \f & \Delta,\c\nm \f}  \]

\begin{lemma}\label{failsxor}
\emph{(XOR)} does not hold for $\prj$.
\end{lemma}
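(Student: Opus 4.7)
The plan is to exhibit an explicit counterexample with three hypotheses. First I would compute how $r^l$ behaves on a disjunction of mutually exclusive data: when $\delta,\gamma\models\bot$, additivity of conditional probability gives $r^l_{\delta\vee\gamma}(H) = P(\neg(\delta\vee\gamma)\mid H) = 1 - P(\delta\mid H) - P(\gamma\mid H)$, while $r^l_\delta(H) + r^l_\gamma(H) = 2 - P(\delta\mid H) - P(\gamma\mid H)$. Hence these two quantities differ by the constant $1$ and therefore share the same argmin over $H\in\H$. This already tells me that $\maxh_{\{\delta\vee\gamma\}}$ coincides with the set of hypotheses minimising the sum $r^l_\delta + r^l_\gamma$, and the failure of (XOR) should arise from the usual gap between the minimiser of this sum and the minimisers of the individual rejections.

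I would then fix $\H = \{H_1,H_2,H_3\}$ with the partition axiom $T$ defined in Section \ref{sec:quantitative}, pick two formulas $\delta,\gamma\in\fm_\D$ with $\delta,\gamma\models\bot$, and choose a conditional probability function with $P(\delta\mid H_1) = 0.95$, $P(\gamma\mid H_1) = 0.01$, $P(\delta\mid H_2) = 0.01$, $P(\gamma\mid H_2) = 0.95$, and $P(\delta\mid H_3) = P(\gamma\mid H_3) = 0.5$. These values are jointly admissible since incompatibility of $\delta,\gamma$ only requires $P(\delta\mid H_i) + P(\gamma\mid H_i) \leq 1$ for each $i$. The rejection degrees are then $r^l_\delta(H_1,H_2,H_3) = (0.05,\, 0.99,\, 0.5)$ and $r^l_\gamma(H_1,H_2,H_3) = (0.99,\, 0.05,\, 0.5)$, so $\maxh_{\{\delta\}} = \{H_1\}$ and $\maxh_{\{\gamma\}} = \{H_2\}$. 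On the other hand, $r^l_\delta(H) + r^l_\gamma(H)$ equals $1.04$ for $H_1$ and $H_2$ but $1$ for $H_3$, so by the identity above $\maxh_{\{\delta\vee\gamma\}} = \{H_3\}$.

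Finally, I would take $\Sigma = \emptyset$ and $\f := \neg H_3 \in \fm_\H$. Since $T$ forces mutual exclusion of the $H_i$'s, both $T, H_1 \models \neg H_3$ and $T, H_2 \models \neg H_3$, so Definition \ref{def:Rj} yields $\{\delta\}\prj\f$ and $\{\gamma\}\prj\f$. However $T\wedge H_3$ is clearly satisfiable, hence $T, H_3 \not\models \neg H_3$ and therefore $\{\delta\vee\gamma\}\not\prj\f$, contradicting (XOR). The only arithmetic obstacle is tight but real: incompatibility forces $r^l_\delta(H) + r^l_\gamma(H) \geq 1$ for every $H$, so I need one hypothesis to sit strictly closer to this lower bound than the others while being strictly worse than each of them on $r^l_\delta$ and $r^l_\gamma$ individually, which is exactly what the asymmetric choice above achieves.
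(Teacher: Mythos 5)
Your proposal is correct and takes essentially the same route as the paper: an explicit numerical counterexample over $\H=\{H_1,H_2,H_3\}$ with $\Sigma=\emptyset$ and incompatible $\delta,\gamma$, arranged so that $\maxh_{\{\delta\}}$ and $\maxh_{\{\gamma\}}$ both entail the conclusion while $\maxh_{\{\delta\vee\gamma\}}$ is a third hypothesis that does not. The constant-offset identity $r^l_\delta(H)+r^l_\gamma(H)=r^l_{\delta\vee\gamma}(H)+1$ is a nice structural explanation of why such counterexamples exist, but the argument is the same in kind as the paper's.
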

\begin{proof}
To construct the required counterexample, let $\H = \{H_1,H_2,H_3\}$ and suppose  $\c,\d\models \bot$, i.e. $\c$ and $\d$ are logically incompatible data. Consider the following probability assignments: 
\begin{align*}
P(\neg \d\land\c \mid H_1) = 0.5  & \quad P(\neg \d\land\c \mid H_2 ) = 0.3 &   P(\neg \d\land\c \mid H_3 ) = 0.7 \\
P(\d\land\neg \c \mid H_1 ) = 0.4 & \quad  P(\d\land\neg \c \mid H_2) = 0.5 &  P( \d\land\neg\c \mid H_3 ) = 0.1\\
 P(\neg\d\land\neg\c \mid H_1 ) = 0.1 & \quad P(\neg\d\land\neg\c \mid H_2) = 0.2  &  P(\neg \d\land\neg\c \mid H_3 ) = 0.2.
\end{align*}

Since $\c,\d\models \bot$ we have 
\[P(\d\land \c \mid H_1 ) = P(\d\land\c \mid H_2) = P(\d\land\c\mid H_3) =0. \] 

Now, by simple computations, we obtain 
\begin{align*}
	r^l_{\delta}(H_1) &= P(\neg\delta \mid H_1) = 0.6, \\
	r^l_{\delta}(H_2) &= P(\neg\delta \mid H_2) = 0.5, \\
	r^l_{\delta}(H_3) &= P(\neg \delta \mid H_3) = 0.9,
\end{align*}
hence $\maxh_{{\d}} = \{H_2\}$ and \[{{\d}\nm H_2\lor H_3}.\]
Moreover,
\begin{align*}
	r^l_{\c}(H_1) &= P(\neg{\c}\mid H_1) = 0.5,	\\ r^l_{\c}(H_2) &=P(\neg{\c}\mid H_2) = 0.7,\\ r^l_{\c}(H_3) &=P(\neg {\c}\mid H_3)= 0.3,
\end{align*}
hence $\maxh_{{\c}} = \{H_3\}$ and \[{{\c}\nm H_2\lor H_3}.\]
We have thus shown that the premises of XOR are satisfied. 
On the other hand $r^l_{{\d}\lor{\c}}(H_1)= P(\neg {\d}\land \neg {\c}\mid H_1)= 0.1$, 
$r^l_{\d\lor\c}(H_2)= P(\neg \d\land \neg \c\mid H_2)= 0.2$, and $r^l_{{\d}\lor{\c}}(H_3)= P(\neg {\d}\land \neg {\c}\mid H_3)= 0.2$, so that $\maxh_{{\d}\lor {\c}} = \{H_1\}$, and finally \[{{\d}\lor{\c}\not\nm H_2\lor H_3}\]
that is, the conclusion of the rule (XOR) is not satisfied.
\end{proof}

\subsection{$\alpha$-RJ consequence}\label{sec:p-value}

Recall that likelihood is the probability of \textit{observed} data conditional on a (statistical null) hypothesis. It is a calculated, rather than estimated, quantity.  A long-standing controversy in the methodology of statistics revolves around whether likelihood is all that one should take into account when making data-driven inferences, see \cite{Royall1997} for an articulated account.

As recalled in the introductory Section, the practice known as Null Hypothesis Significance Testing (NHST) combines several aspects of statistical methodology and gives a prominent role to the p-value which is the calculated  probability of obtaining \textit{hypothetical} data, namely data at least as improbable as those observed  conditional on the null hypothesis. Clearly the p-value is not entirely data-driven, as it features data which in fact one has \textit{not} seen -- see \cite{Wasserstein2016} for an extensive discussion on this and related point. 

For our present purposes it is enough that we capture the qualitative comparison of the conditional probabilities involved, and in particular the fact that when computing p-values, we consider the cumulative probability function defined over all events which are less probable than those observed conditional on $H_0$.

To capture this new feature, we add to the language of $\rj$ and $\prj$ an operator $t_H$ which associates to each hypothesis $H$, the formula $t_H(\delta)$. The operator is  characterised as follows:
\begin{equation}\label{eq:t}
	\delta \models t_H(\delta), \qquad  \infer{t_H(\c)\models t_H(\d)}{\c\models \d}, \qquad t_H(t_H(\d))\models t_H(\d).
\end{equation}

From the logical point of view, $t_H$ is just  a closure operator over the set of models of the formula $\delta\in\fm_{\D}$. The rules which characterise it express its extensiveness, monotonicity and idempotence, respectively.
 
To illustrate, $\d\models t_H(\d)$ states that the set of models of  $\d$, which translate probabilistically as the event that $\delta$ occurs, is a subset of the set of models of $t_H(\d)$, the event that data at least as improbable than $\delta$ occur. The other rules  read similarly.

The consequence relation to be introduced in Definition \ref{def:arj} below departs in another way from $\prj$. Recall that NHST combines ideas by Fisher about significance inference with ideas by Neyman and Pearson about statistical testing, albeit against the will of all parties involved. Whilst the Fisher approach insists on p-values providing a degree of inconsistency between the data and the null hypothesis, the Neyman-Pearson one is oriented towards the (binary) decision whether to reject or not the null hypothesis. To do so, a threshold $\alpha$ is fixed at design time which  identifies the so-called \textit{rejection region}: the null hypothesis is rejected if the observations conditional on the null hypothesis  falls within this region.  The key feature of the consequence relation $\arj$ is that of combining the qualitative nature of the binary decision with the graded interpretation of the p-value. 

Two preliminary remarks before we get to the definitions. First,  we represent the p-value associated to hypothesis $H$ by the quantity $P( t_H(\delta) \mid  H)$. Building on this,  the rejection degree for a hypothesis $H\in \H$ is set to 1 minus the p-value, \textit{provided} the latter is below a fixed threshold $\a$.  Second, in Definition \ref{def:degarj} below, and in the reminder of this Section, we will assume that hypotheses in $\mathcal{H}=\{H_1,\ldots, H_n\}$ are mutually exclusive and jointly exhaustive. 
%Finally, and without loss of generality, we will assume that {$H_n$} is classically equivalent to the negation of all the other hypotheses.  

 %Indeed, for reasons that will be apparent, we will be interested in taking ``less extreme events''. }
%\newpage
\begin{definition}[$r^\alpha$]\label{def:degarj}
%Fix $\a\in(0,1)$ and suppose $\delta\in\fm_\D$. 
%Then	 \[  
%r^\a_{\delta} (H_n )= \begin{cases}
%	1       &\, \text{ if } T, H_n \models\neg \d, \\
%	0  & \, \text{ otherwise. }
%\end{cases} 
%\] 
%If $H_i\not = H_n$ then 
Fix $\a_i\in[0,1)$ for each $H_i\in \H$ and suppose $\delta\in\fm_\D$. Then
\begin{equation*}
   r^{\a}_{\delta}(H_i) = 
  \begin{cases}
  1-P( t_{H_i}(\delta) \mid  T\land H_i)  \qquad  \text{ if } P( t_{H_i}(\delta) \mid  T\land H_i)\leq \a_i  \\
  0  \qquad\qquad\qquad\qquad\qquad \quad \; \text{ otherwise.}
\end{cases}
\end{equation*}

Finally,
\[r^\a_{\Delta}(H) = \sum_{\delta\in\Delta} r^\a_\delta(H),\]
for any $H\in \H, \Delta\in \multid$.
 \end{definition}

%  
%Note the asymmetry between the rejection of $H_n$ and that of other hypotheses. Since $H_n$ plays the role of a \emph{catchall} hypothesis it is not reasonable to assume that it can be rejected by data because it is 
%{not} directly under scrutiny. So if one rejects $H_n$ it can only be on  logical grounds.

%\paolo{This use of a catchall hypothesis that is not under test matches scientific practice, which often departs from the ideal condition of having an exhaustive set of hypotheses, all explicitly stated and testable.} 
%\footnote{\paolo{Ho provato a riformulare rendendo chiaro che può essere una situazione non ideale, ma siamo comunque capaci di catturarla. Non so se va bene...non metterei tanto la rejection logica, perché  lo vedo come un caso "degenere" che non riflette direttamente il punto qui. Altrimenti per me andrebbe bene  togliere}}

%\comment{This matches  the practice of NHST, when testing for rejection \emph{only} a null hypothesis, without explicit consideration of the alternative(s).}{Vero ma non lo direi così perché è spesso considerato un difetto metodologico. Sarei per portare un esempio di rejection logica, oppure lasciare perdere e tenerci questa risposta nel caso in cui ci viene chiesto chiarimento}

\begin{lemma}\label{genincl}
$r^\alpha$ is a degree of rejection.
\end{lemma}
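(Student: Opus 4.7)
The plan is to check each of the four clauses of Definition \ref{def:degrej} in turn, exploiting the three closure-operator axioms \eqref{eq:t} together with the standard monotonicity and normalisation properties of the conditional probability $P(\cdot \mid T \land H_i)$.

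First I would dispatch clause (4), which is trivial: the definition of $r^\alpha_\Delta(H)$ is literally set to $\sum_{\delta \in \Delta} r^\alpha_\delta(H)$, so there is nothing to verify. Next I would handle clause (2). If $T, H_i \models \delta$, then $P(\delta \mid T \land H_i) = 1$; by extensiveness $\delta \models t_{H_i}(\delta)$, so $P(t_{H_i}(\delta) \mid T \land H_i) = 1$, which strictly exceeds $\alpha_i \in [0,1)$. Hence the definition assigns $r^\alpha_\delta(H_i) = 0$, as required.

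For clause (1), suppose $T, \gamma \models \delta$. Working modulo $T$, the event $\gamma$ is contained in $\delta$, so $P(\gamma \mid T \land H_i) \leq P(\delta \mid T \land H_i)$. Applying the monotonicity rule from \eqref{eq:t}, we obtain $t_{H_i}(\gamma) \models t_{H_i}(\delta)$ modulo $T$, whence $P(t_{H_i}(\gamma) \mid T \land H_i) \leq P(t_{H_i}(\delta) \mid T \land H_i)$. I then split on whether the p-value for $\delta$ falls in the rejection region: if $P(t_{H_i}(\delta) \mid T \land H_i) > \alpha_i$ then $r^\alpha_\delta(H_i) = 0 \leq r^\alpha_\gamma(H_i)$; otherwise $P(t_{H_i}(\gamma) \mid T \land H_i) \leq \alpha_i$ as well, and subtracting both sides from $1$ yields $r^\alpha_\gamma(H_i) \geq r^\alpha_\delta(H_i)$.

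The main obstacle is clause (3). Assume $T, \delta \models \neg H_i$, so that $P(\delta \mid T \land H_i) = 0$. To conclude $r^\alpha_\delta(H_i) > 0$ I need $P(t_{H_i}(\delta) \mid T \land H_i) \leq \alpha_i$, and this is not guaranteed by the three bare closure axioms alone. The argument must invoke the intended p-value semantics of $t_{H_i}(\delta)$, namely that $t_{H_i}(\delta)$ denotes the set of outcomes at most as probable as $\delta$ under $H_i$: when $\delta$ has conditional probability $0$, the set of outcomes no more probable than $\delta$ also has conditional probability $0$. Under this reading $P(t_{H_i}(\delta) \mid T \land H_i) = 0 \leq \alpha_i$, so $r^\alpha_\delta(H_i) = 1 - 0 > 0$, using $\alpha_i < 1$. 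I would make this implicit semantic stipulation explicit in the proof (or add it as a supplementary property of $t_H$ alongside \eqref{eq:t}), since without it clause (3) cannot be secured from the abstract closure axioms alone.
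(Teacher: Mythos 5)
Your verification is correct and, for condition (1), follows essentially the same route as the paper: push the entailment $\gamma\models\delta$ through the monotonicity axiom for $t_{H_i}$ to obtain $P(t_{H_i}(\gamma)\mid T\land H_i)\leq P(t_{H_i}(\delta)\mid T\land H_i)$, then case-split on whether the p-value clears the threshold $\alpha_i$; your split on $\delta$'s p-value is in fact slightly cleaner and covers the boundary cases the paper leaves implicit. The substantive difference is in coverage. The paper verifies only condition (1) and dismisses the remaining conditions as ``established similarly''; you check all four, and --- crucially --- you are right that condition (3) is \emph{not} similar. From $T,\delta\models\neg H_i$ one gets $P(\delta\mid T\land H_i)=0$, but the three closure axioms in \eqref{eq:t} alone do not bound $P(t_{H_i}(\delta)\mid T\land H_i)$ from above: the instantiation $t_{H_i}(\delta)=\top$ for every $\delta$ satisfies extensiveness, monotonicity and idempotence, yet forces $P(t_{H_i}(\delta)\mid T\land H_i)=1>\alpha_i$ and hence $r^{\alpha}_{\delta}(H_i)=0$, violating condition (3). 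Securing that condition really does require the intended p-value reading of $t_{H_i}$ --- that the cumulative event of outcomes no more probable than a null-probability $\delta$ itself has null conditional probability --- added as a supplementary property alongside \eqref{eq:t}, exactly as you propose. So your proof is not only correct but makes explicit an assumption the paper's own (abbreviated) proof glosses over; the one improvement would be to state that supplementary axiom formally rather than as a semantic gloss.
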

  
\begin{proof}
We need to check that the rejection degree of Definition	\ref{def:degarj} satisfies Definition \ref{def:degrej}.

For condition (1), in suppose $\c\models \d$, for $\c,\d\in\fm_\D$. Since $\c\models\d$,  we have that $t_{H_i}(\c)\models t_{H_i}(\d)$, hence by the monotonicity of probability functions, $P( t_{H_i}(\c) \mid H_i ) \leq   P( t_{H_i}(\d) \mid H)$. From this it follows immediately that 
if $P( t_{H_i}(\c) \mid H_i )\geq \a_i$, then $P( t_{H_i}(\d) \mid H)\geq \a_i$, and $1- P( t_{H_i}(\c) \mid H_i )\geq 1 - P( t_H(\d) \mid H_i)$, hence $r^\a_\c(H) \geq r^\a_\d(H)$, as required. 

The remaining conditions are established similarly, hence we omit checking them explicitly. 
% 
%As to condition (2) it is immediate to see that if $H\models \c$
%, then $P(t_H(\d)|H) = 1$, hence $r^\a_\d(H)=0$ for any $\a\in (0,1)$. \paolo{Similarly, for condition (3) if $H\models \neg d$, then $P( t(\d)|H) = 0 < \a$, hence $r^\a_\d(H)=1>0$.  
%Finally, condition (4) is obvious by definition.}
\end{proof}
We now define  $\arj$ as follows. 
{
\begin{definition}[$\arj$]\label{def:arj}
  We say that \emph{$\phi$ is  \arjt-consequence of $\Delta$}, written \mbox{$\Delta\arj\phi$}, if
  \begin{itemize}
  \item[i)]$\Delta\rj\phi$, and
   \item[ii)] $r$ is computed according to Definition \ref{def:degarj}, i.e. $r= r^\alpha.$
  \end{itemize}
\end{definition}
}

The next example shows that  $\arj$ formalises the key inferential step in a stylised example of a NHST procedure.

\begin{example}
Consider the null hypothesis $H_0$ and recall that we are under the blanket assumption to the effect that hypotheses form a partition. Hence we can write $\H = \{H_0, \neg H_0 \}$ instead of $\H= \{H_0,H_1\}$.
We fix $\a_0\in(0,1)$  and let $\a_1 = 0$. For simplicity, we drop the index of the former, denoting $\a_0$ by $\a$.
Definition \ref{def:degarj} gives us $r^\a_\delta(H_0) =1-P(t(\delta)|H_0) $ whenever $ P(t(\delta)|H_0)\leq \a$, and $r^\a_\delta(H_0)=0$ otherwise. On the other hand $r_{\delta}(\neg H_0) = 1$ only if $P(t_{\neg H_0}(\delta)|\neg H_0) = 0$, and  $r_{\delta}(\neg H_0) = 0$ otherwise. This accounts for the fact that in NHST one does not consider $\neg H_0$ to be under scrutiny for possible rejection. Thus $\neg H_0$ is rejected only in case of data practically impossible under $\neg H_0$ (e.g. in case of logical contradictions), while  the focus is wholly on whether the null hypothesis $H_0$ is rejected.
%And for $\neg H_0$,  we have
%$$r^\a_{\delta} (\neg H_0 ) = \begin{cases}
%	1       &\, \text{ if } \neg H_0 \models\neg \d, \\
%	0  & \, \text{ otherwise. }
%\end{cases}	
%	$$
Hence, whenever $P(t_{\neg H_0}(\delta)|\neg H_0) \neq 0$, we have that $\maxh_\delta = \{ \neg H_0\}$ if and only if $r^\a_{\delta}(H_0)\neq 0$, i.e. $P(t(\delta)|H_0) \leq \alpha$. This means in turn that
$$ \delta \arj \neg H_0 \text{ if and only if } P(t(\delta)|H_0) \leq \alpha,$$
i.e. \textit{the null hypothesis is rejected exactly when the p-value is below the fixed significance level }(equivalently falls within the \textit{rejection region}).
 \end{example}

Note that the above example also illustrates the asymmetry between \emph{rejecting} the null hypothesis, i.e. $\delta\arj \neg H_0 $  and \textit{retaining} it. Indeed, if $P(t(\delta)|H_0) >\alpha$, we  have $r^\a_\delta (H_0) = r^\a_\delta(\neg H_0) = 0$, hence $\maxh = \{H_0,\neg H_0\}$. So by Definition \ref{def:arj} we have both $\delta\not\arj  H_0$  and  $ \delta\not\arj  \neg H_0.$

% Now $H_1$ belongs to the set of least rejected hypotheses $\maxh_\delta$ if  $r_\delta(H_1) \leq r_\delta(H_0)$, i.e. if
%$1 - P(t(\delta)|H_1) \leq 1- P(t(\delta)|H_0)$, hence 
%\begin{equation}\label{eq:lr}
%    P(t(\delta)| H_1) \geq P(t(\delta)|H_0).
%  \end{equation}
%

The last proposition in this Section compares  $\arj$ and   $\prj$, by showing that neither  $\text{(AND)}_l$ nor $\text{(AND)}_l^{con}$, are satisfied by $\arj$ (recall that by Proposition \ref{lemma:prj-soundness} they are both satisfied by $\prj$).

%Recall that the semantics of $\arj$ assigns a special role to $H_n$, the catchall hypothesis. This difference is responsible for the fact that there is a rule which is satisfied by $\arj$ and not by $\prj$. 
%\begin{proposition}\label{negn}
%Let $\H = \{H_1,\dots,H_n \}$. The consequence relation $\arj$ satisfies the rule:
%\[\infer[(NEG_n)]{\delta \not \nm  \neg H_n}{ \delta\not\models\neg H_n},\]
%but $\prj$ does not.
%\end{proposition}
%\begin{proof}
%For the first part, suppose the premise of $(NEG_n)$ holds. Then we have  $r^\a_\delta(H_n)=0$, by Definition \ref{def:arj}.  But this implies that $H_n\in \maxh_{\delta}$. Then, since clearly $H_n\not \models \neg H_n$ we get $\d\not\nm \neg H_n$, as required.
%
%To see that  $(NEG_n)$  does not hold for $\prj$  take $r=r^l$, $\H = \{H_1,H_2\}$ with $ H_1\models \neg H_2$, and  $\d\in\fm_\D$ such that $\d\not\models\neg H_2$ and $r_\d(H_1)\leq r_\d(H_2)$. We thus  have $\maxh_\d = \{H_1\}$ and $H_1\models \neg H_2$, so that $\d\nm \neg H_2$, as required.
%\end{proof}
%
%Whilst the above may suggest that there is an inclusion relation between $\arj$ and $\prj$, this is not the case since while
 \begin{restatable}{proposition}{Fourthprop} Both $(\text{AND})_l$ and $(\text{AND})_l^{con}$ are invalid under $\arj$.
\end{restatable}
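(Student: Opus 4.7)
The approach is to construct a single counterexample that refutes both rules simultaneously. The proof of Proposition \ref{lemma:prj-soundness} showed that $(\text{AND})_l$ holds for $\prj$ because $r^l$ satisfies the identity $r^l_{\d\land\c}(H) = r^l_\d(H) + r^l_\c(H)$ whenever $\neg\d,\neg\c\models\bot$, which rests on finite additivity of $P(\cdot\mid H)$ applied to $P(\neg\d\lor\neg\c\mid H)$. No such identity holds for $r^\alpha$: the quantity $P(t_H(\d\land\c)\mid T\land H)$ is determined directly by the closure operator $t_H$ applied to $\d\land\c$, and the axioms in \eqref{eq:t} leave it essentially free relative to $P(t_H(\d)\mid T\land H)$ and $P(t_H(\c)\mid T\land H)$. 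The plan is to exploit this slack to produce a scenario in which $r^\alpha_{\d,\c}$ and $r^\alpha_{\d\land\c}$ rank two hypotheses oppositely.

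Let $\fm_\D$ be generated by two propositional variables $p,q$, and set $\d := p\lor q$ and $\c := p\lor\neg q$. Then $\neg\d\land\neg\c\models\bot$ (the side condition of both rules) while $\d\land\c\equiv p$. Take $\H = \{H_1, H_2\}$ with thresholds $\alpha_1 = \alpha_2 = 0.9$. The crux is to prescribe a conditional probability function $P$ and closure operators $t_{H_1}, t_{H_2}$ so that
\[P(t_{H_1}(\d)\mid T\land H_1) = P(t_{H_1}(\c)\mid T\land H_1) = 0.9, \qquad P(t_{H_1}(p)\mid T\land H_1) = 0.2,\]
\[P(t_{H_2}(\d)\mid T\land H_2) = P(t_{H_2}(\c)\mid T\land H_2) = P(t_{H_2}(p)\mid T\land H_2) = 0.7.\]
These values satisfy the axioms of $t_H$: since $p\models\d$ and $p\models\c$, monotonicity only requires $P(t_{H_i}(p)\mid T\land H_i) \le \min\{P(t_{H_i}(\d)\mid T\land H_i), P(t_{H_i}(\c)\mid T\land H_i)\}$, which reads $0.2\le 0.9$ and $0.7\le 0.7$.

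By Definition \ref{def:degarj} the rejection degrees are then $r^\alpha_\d(H_1) = r^\alpha_\c(H_1) = 0.1$, $r^\alpha_p(H_1) = 0.8$, and $r^\alpha_\d(H_2) = r^\alpha_\c(H_2) = r^\alpha_p(H_2) = 0.3$. Summing over the multiset $\{\d,\c\}$ gives $r^\alpha_{\d,\c}(H_1) = 0.2 < 0.6 = r^\alpha_{\d,\c}(H_2)$, hence $\maxh_{\d,\c} = \{H_1\}$ and $\d,\c \arj H_1$. Meanwhile $r^\alpha_p(H_1) = 0.8 > 0.3 = r^\alpha_p(H_2)$ gives $\maxh_{\d\land\c} = \{H_2\}$, so $\d\land\c \arj H_2$ but $\d\land\c \not\arj H_1$ (since $T\land H_2\models\neg H_1$ by mutual exclusivity). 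Thus $\f := H_1$ refutes $(\text{AND})_l$ and $\f := H_2$ refutes $(\text{AND})_l^{con}$.

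The main obstacle I anticipate is the realizability of the prescribed p-values. The key observation is that the axioms on $t_H$ impose only one-sided monotone bounds on the values $P(t_{H_i}(\delta)\mid T\land H_i)$ in terms of $P(\delta \mid T\land H_i)$; by choosing the marginals $P(p \mid T\land H_i)$ below the prescribed p-values of $t_{H_i}(p)$, a probability function on $\fm_\D$ realizing the target numbers is readily produced, and the closure operators $t_{H_i}$ can be defined on an appropriate extension of the language without any further conflict with the axioms in \eqref{eq:t}.
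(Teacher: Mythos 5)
Your overall strategy is sound and genuinely different from the paper's: the paper keeps $t_{H_i}$ as the identity and makes the counterexample turn on the threshold $\alpha_i$ (the individual p-values of $\c$ and $\d$ under $H_2$ sit \emph{above} $\alpha_2$ and so contribute rejection degree $0$, while the conjunction's p-value drops \emph{below} it), whereas you keep all p-values below the threshold and try to exploit the slack in the closure operators. However, your prescribed values for $H_2$ are not realizable, so the counterexample as written does not exist. The axioms in \eqref{eq:t} do not leave $P(t_H(\d\land\c)\mid T\land H)$ free: since $\neg\d,\neg\c\models\bot$ gives $\d\lor\c\equiv\top$, extensiveness forces $\models t_H(\d)\lor t_H(\c)$, and monotonicity forces $t_H(\d\land\c)\models t_H(\d)$ and $t_H(\d\land\c)\models t_H(\c)$; finite additivity then yields
\[
P(t_H(\d)\mid T\land H)+P(t_H(\c)\mid T\land H)\;=\;1+P(t_H(\d)\land t_H(\c)\mid T\land H)\;\geq\;1+P(t_H(\d\land\c)\mid T\land H).
\]
Your $H_2$ assignment has $0.7+0.7=1.4<1.7=1+0.7$, violating this necessary constraint; equivalently, whenever no thresholding intervenes one always has $r^\a_\d(H)+r^\a_\c(H)\leq r^\a_{\d\land\c}(H)$, and your $H_2$ numbers give $0.3+0.3=0.6\not\leq 0.3$. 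The anticipated ``obstacle'' you address (keeping the marginals $P(p\mid T\land H_i)$ below the closure values) is not where the problem lies; the binding constraint couples the three closure values to one another.

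The good news is that the one-sided inequality above is the \emph{only} obstruction, so your construction is repairable without changing its spirit: keep the $H_1$ data as is ($0.9,0.9,0.2$, which satisfies $1.8\geq 1.2$), and for $H_2$ take, say, $P(t_{H_2}(\d)\mid T\land H_2)=P(t_{H_2}(\c)\mid T\land H_2)=0.7$ and $P(t_{H_2}(p)\mid T\land H_2)=0.4$ (so $1.4\geq 1.4$; realizable with $t_{H_2}$ the identity and $P(p\mid T\land H_2)=0.4$, $P(\neg p\land q\mid T\land H_2)=P(\neg p\land\neg q\mid T\land H_2)=0.3$). Then $r^\a_{\d,\c}(H_1)=0.2<0.6=r^\a_{\d,\c}(H_2)$ while $r^\a_{\d\land\c}(H_1)=0.8>0.6=r^\a_{\d\land\c}(H_2)$, so $\maxh_{\{\d,\c\}}=\{H_1\}$ and $\maxh_{\{\d\land\c\}}=\{H_2\}$, which refutes both rules as you intended. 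You would still owe the reader an explicit closure operator $t_{H_1}$ realizing $P(t_{H_1}(\d)\mid T\land H_1)=P(t_{H_1}(\c)\mid T\land H_1)=0.9$ with $P(t_{H_1}(p)\mid T\land H_1)=0.2$ (this requires enlarging the data language beyond $\{p,q\}$ so that the complements of $[\d]$ and $[\c]$ can be split), or you could sidestep the closure operators entirely and adopt the paper's threshold mechanism, which works with $t_{H_i}$ the identity.
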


See the Appendix for the proof.

%As a consequence of Proposition \ref{prop:rl}, this  implies that our blueprint consequence relation $\rj$ fails  $(AND)_l$ too.
\section{\urjt-consequence}

\label{sec:rrj}
Our next and final instantiation of the blueprint consequence relation $\rj$ arises by taking the more general approach to data-driven inference offered by the method of \textit{Strong Inference}, briefly recalled in Section \ref{sec:SI} above. 

Strong Inference stands in close analogy with the Ulam-Rényi game, which we spell out as the \textit{Ulam-Rényi game for Strong Inference} -- USI, for short. 

\subsection{Ulam-Rényi game for Strong Inference}	
A USI game is played by \emph{Scientists} (\textbf{S}) against \emph{Nature} (\textbf{N}). \textbf{N} ``thinks'' of a number, called the \emph{secret} which is the index of the unique true hypothesis in $\H=\{H_1, \ldots, H_n\}$.  \textbf{S} must figure out what the secret is, which we denote by $H_{s^*}$, and aims to do so as quickly as possible. The only move available to \textbf{S} is to ask Nature binary questions, i.e. questions that can be answered with either ``Yes'' or ``No''. In this latter case we say that \textit{the hypothesis has been {rejected}.}

%
%\begin{itemize}
%\item[] Two Players: \emph{Scientists} (\textbf{S}) and \emph{Nature} (\textbf{N})
%\item[]
%\item[] \textbf{N} ``thinks'' of a number, called the \emph{secret} which is the index of the unique true hypothesis in $\H=\{H_1, \ldots, H_n\}$
 
% \item[]
%\item[] \textbf{S} must figure out what the secret is, which we denote by $H_{s^*}$, and aims to do so as quickly as possible
% \item[]
%\item[]Yes-No questions
% \item[]
% \item[] \textbf{N} can lie  \textit{at most} $0 \leq m  \le \infty$ times
% \item[]
%\item[] $H$ is \emph{temporarily rejected} if  \textbf{N} answers ``No" to a question about $H$
 
% \item[]
%%\item[] $H$ is  \emph{rejected} if it has been {temporarily rejected} at least $m+1$ times.
%\end{itemize}
%

Ulam-R\'{e}nyi games are related to logic as follows. In the form just recalled above, they  provide a sound and complete semantics for classical logic. If however, Nature is allowed to \textit{lie} $m$ times ($m \in \mathbb{N}$), then USI game provides a sound and complete semantics for the $(m+2)$-valued \luka\;logic \cite{mundici1992logic,mundici1993ulam}.
	
Building on this, we will assume that \textbf{N} can lie  \textit{at most} {$0 \leq m  < \infty$} times. We say that a hypothesis $H$ is \emph{temporarily rejected} if  \textbf{N} answers ``No" to a question about $H$ and $m>0$. A hypothesis is   
  \emph{rejected} if it has been {temporarily rejected} at least $m+1$ times. %\footnote{\paolo{Ho controllato le sostituzioni e segnalato dove ho fatto modifiche. Ho solo un problema:usando  temporarily rejected mi pare ci sia conflitto con l'uso successivo di "least rejected hypothesis" che abbiamo spesso e a rigore dovrebbe ora essere "least temporarily rejected". Non potrebbe generare confusione?}}
%\comment{}{controllare che tutte le sostituzioni di "definitively rejected" con "rejected" e di "rejected con "temporarily rejected" siano andate a buon fine}

	Rather than a deceptive or mischievous view of Nature, this feature of USI games captures the fact that data are often gappy, ambiguous, or otherwise imperfect. Hence lies are interpreted here as mistakes made by scientists either in the design of experiments or in the analysis of the data generated by them. The fact that there is a bound to the number of mistakes captures the single most distinctive aspect of scientific reasoning: it will eventually self-correct. Indeed, the aim of the USI game is to  reject all candidates except for the secret, which  can be temporarily rejected, but not rejected.

The remainder of this work explores the consequences of taking the USI game as the intended semantics for our  \urjt-consequence whose properties are pinned down in the main result of this paper, Theorem \ref{thm:iminimal} below.

The formal setup is as follows. Each question-and-answer in a USI game is represented by a formula $\f$ in $\fm$, which we interpret as a positive answer to the question ``Does $\f$ hold?''.  We assume this is the only kind of data relevant to \urjt-consequence. So $\Delta$ is now a multiset from $\fm=\fm_\D=\fm_\H$. This means that data is  expressed in the same language of the hypotheses and we no longer distinguish $\fm_\D$ from $\fm_\H$. As described {in Section \ref{sec:quantitative}} 
%in the preparation for  blueprint consequence relation, 
scientists playing the USI game can rely on non-revisable assumptions, and in particular the fact that exactly one of the hypothesis is true (and Nature cannot lie about this). As usual we capture this with the formula $T$.

\begin{example}\label{ex:ulam} 
	Suppose that $\H = \{H_1,H_2, H_3, H_4\}$. Recall that  Scientists can only ask questions of the form  ``Does the secret belong to $D \subseteq \H$?". If, say,   $D=\{H_1\}$  then  $\mathbf{S}$ asks the question ``Is the secret $H_1$?". Suppose  Nature responds ``No". This means that the only hypothesis that has been temporarily rejected is $H_1$. The information provided by this first round of the USI game can be  expressed by the formula {$H_2 \lor H_3 \lor H_4$} and under the assumption that $T$ holds, this is logically equivalent to $\neg H_1$.  
	In general, if Nature's answer is ``Yes" and $I_{D}$ is the set of indexes of the hypotheses in $D$, then this information is expressed by the formula {$\bigvee_{i \in I_{D}} H_i$} which, modulo $T$, is logically equivalent to $\bigwedge_{i \in [n] \setminus I_D}\neg H_i$  ($n=|\H|$). If Nature's answer to the question ``Does the secret belong to $D \subseteq \H$?" is ``No", then this information is expressed by the two logically equivalent formulas {$\bigvee_{i \in [n] \setminus I_D}H_i$} and $\bigwedge_{i \in I_{D}} \neg H_i$. Table \ref{tab:form} illustrates sample questions-and-answers relative to $\H = \{H_1,H_2, H_3, H_4\}$.

\begin{table}[h!]
\begin{tabular}{l|c|cl}
Question & Answer & \multicolumn{2}{l}{{ Equivalent Formalizations (given $T$)}} \\ \hline
   $D=\{H_1, H_2\}$      &   Yes   &         ${H_1 \lor H_2}$          &      $\neg H_3 \land \neg H_4$            \\
      $D=\{H_4\}$     &    Yes    &        $H_4$          &     $\neg H_1 \land \neg H_2 \land \neg H_3$             \\
 $D=\{H_1, H_3, H_4\}$ &  No      &        $H_2$            &     $\neg H_1 \land \neg H_3 \land \neg H_4$          \\  
 $D=\{H_2\}$  &   No     &       {$ H_1 \lor  H_3 \lor H_4$}           &          $\neg H_2$      

\end{tabular}
\caption{Formalization of data in a USI game}
\label{tab:form}

\end{table}

\end{example}

 The \textit{degree to which a hypothesis is temporarily rejected by a single question-and-answer $\delta$} in a USI game after $k$ questions-and-answers, denoted $\rfu{}_{\d}$, is arrived at as follows. Before any question is answered, $\rfu{}_{\emptyset}(H)$ equals 0 for each $H\in \H$.  Then as data in the form of questions-and-answers arrive, let
	\begin{equation}\label{eq:rr}
{\rfu{}_{\d}(H)}= 
\begin{cases}
	\infty &\text{ if } T\models \neg \delta,\\
    1 &\text{ if } T, H \models\neg \d, \\
    0 &\text{ otherwise.}
\end{cases}
\end{equation}

As usual the aggregation of degrees of rejection yielded by multisets of data is additive, i.e. the sum of the degree of rejection of the formula occurring in the multiset. %So for  $H\in \H$ let 
%This is guaranteed to be finite if  $\rfu{}_\Delta(H)$ is not larger than $m$, the fixed number of lies in the USI game. 

There are however two exceptional cases where $\rfu{}_\Delta(H)$ is taken to be infinite, and different from the sum. The first arises when  
\( \sum_{\delta \in \Delta} \rfu{}_\delta(H_{s^*})>m \).
This accounts for the situation in which the secret has been rejected, i.e. temporarily rejected more times than the fixed number $m$ of available lies.
 Since \textbf{N} knows that the secret is true, this is a contradiction. (Recall that the blueprint consequence relation is explosive, see Lemma \ref{prop:rj-explosive}.)
The second case in which $\ru_\Delta(H_i)$ is infinite arises when hypotheses distinct from the secret are {rejected}, i.e. \( \sum_{\delta \in \Delta} \rfu{}_\delta(H_i)> m\). 

Summing up, we let:

\begin{equation}\label{infiniteru}
 \rfu{}_\Delta(H_i) = \infty
\begin{cases}
   \text { if  } H_i\in \H \text{ and }
 \sum_{\delta \in \Delta} \rfu{}_\delta(H_{s^*})>m\\
 \text { or } H_i\neq H_{s^*} \text{ and }
\sum_{\delta \in \Delta} \rfu{}_\delta(H_i){>}m
%\sum_{\delta \in \Delta} \ru_\delta(H) & \text{if  } H_i\in\H \text{ and } 
\end{cases}
\end{equation}
while, in the remaining cases:
\begin{equation}\label{eq:degrejulam}
 \rfu{}_\Delta(H_i)= \sum_{\delta \in \Delta} \rfu{}_\delta(H_i).
\end{equation}

Note that %before the last stage of the game, 
$H_{s^*}$ might not even belong to $\maxh_{\Delta}$ for arbitrary $\Delta$. We are only guaranteed that $\rfu{}_\Delta(H_{s^*}) \leq m$, i.e. it cannot be temporarily rejected more than  $m$ times.

%\comment{Da aggiungere}{Lemma: $\rfu{k}$ is a degree of rejection.}

\begin{lemma}\label{genincl}
{ $\rfu{}$ is a degree of rejection.}
\end{lemma}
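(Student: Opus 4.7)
My plan is to verify the four clauses of Definition \ref{def:degrej} in turn against the piecewise definition of $\ru$ given by \eqref{eq:rr}, \eqref{infiniteru}, and \eqref{eq:degrejulam}. The key observation underlying the argument is that, since $T$ asserts that $\H$ forms a partition, each $H\in\H$ is satisfiable together with $T$, that is, $T,H\not\models\bot$; all the case analyses to follow will rely on this in order to rule out degenerate branches.

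For clause (1), suppose $T,\gamma\models\delta$. I would argue by cases on $\ru_\delta(H)$. If it equals $0$, the inequality is trivial. If it equals $1$, then $T,H\models\neg\delta$, and the classical contrapositive $T,\neg\delta\models\neg\gamma$ gives $T,H\models\neg\gamma$, so $\ru_\gamma(H)\geq 1$. If it equals $\infty$, then $T\models\neg\delta$, whence combining with $T,\gamma\models\delta$ yields $T\models\neg\gamma$, so $\ru_\gamma(H)=\infty$. For clause (2), from $T,H\models\delta$ and the satisfiability of $T\land H$ I rule out both $T\models\neg\delta$ (which would give $T,H\models\bot$) and $T,H\models\neg\delta$ (same), so the ``otherwise'' branch of \eqref{eq:rr} applies and $\ru_\delta(H)=0$. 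For clause (3), either $T\models\neg\delta$ and $\ru_\delta(H)=\infty>0$, or the contrapositive of $T,\delta\models\neg H$ gives $T,H\models\neg\delta$, landing in the middle branch of \eqref{eq:rr} with value $1>0$.

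Clause (4) is essentially built into the definition: by \eqref{infiniteru} and \eqref{eq:degrejulam}, the value $\ru_\Delta(H)$ equals $\sum_{\delta\in\Delta}\ru_\delta(H)$ precisely in the ``remaining cases'' where neither exceptional trigger of \eqref{infiniteru} fires. I only need to note that in this regime the individual summands are finite: if some $\ru_\delta(H)=\infty$ then $T\models\neg\delta$, which depends only on $\delta$ and not on $H$, so also $\ru_\delta(H_{s^*})=\infty$ and the first exceptional branch of \eqref{infiniteru} would force $\ru_\Delta(H)=\infty$, contradicting the hypothesis.

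The main potential pitfall I anticipate is bookkeeping around the two distinct sources of the value $\infty$ (the per-datum infinity from \eqref{eq:rr} triggered by $T\models\neg\delta$, versus the aggregate overflow infinity from \eqref{infiniteru} triggered by the secret or a non-secret hypothesis being rejected more than $m$ times). Keeping these separate while checking that the contrapositive manoeuvres modulo $T$ really yield the \emph{strict} inequality in clause (3) rather than a merely non-strict one is the only delicate point; the rest is a routine classical case analysis.
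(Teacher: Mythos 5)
Your proof is correct and follows essentially the same route as the paper: condition (1) is handled by the same contrapositive argument modulo $T$, and conditions (2)--(4), which the paper dismisses as immediate consequences of the definition, are the routine case checks you carry out explicitly (your observation that $T\land H$ is satisfiable for each $H\in\H$, and your bookkeeping of the two sources of $\infty$ in clause (4), correctly fill in those omitted details).
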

  
\begin{proof}
We need to check that the rejection degree defined in \eqref{infiniteru} and \eqref{eq:degrejulam} satisfies Definition \ref{def:degrej}. 

For condition (1),  suppose $\c\models \d$, for $\c,\d\in\fm$. Since $\c\models\d$,  we have that $\neg \d\models \neg \c$, hence if $T\models \neg \d$, we have $T\models \neg \c$ and, if $T,H\models \neg \d$ then $T,H\models\neg \c$. This, by equation \eqref{eq:rr} entails that $\rfu{}_\c(H) \geq \rfu{}_\d(H)$.

The remaining conditions are immediate consequences of the definition, hence we omit checking them explicitly. 
% 
%As to condition (2) it is immediate to see that if $H\models \c$
%, then $P(t_H(\d)|H) = 1$, hence $r^\a_\d(H)=0$ for any $\a\in (0,1)$. \paolo{Similarly, for condition (3) if $H\models \neg d$, then $P( t(\d)|H) = 0 < \a$, hence $r^\a_\d(H)=1>0$.  
%Finally, condition (4) is obvious by definition.}
\end{proof}

\begin{example}\label{ex:ulam1}
Continuing with the setting of Example \ref{ex:ulam}, let $\H = \{H_1,H_2, H_3, H_4\}$. Suppose now that the number of lies Nature can use is $m=2$. Then, the following is a possible sequence of plays of the USI game:
\begin{enumerate}
\item Question: ``Is the secret in $D_{1}=\{H_1, H_2\}$?" \\
Answer: \; ``No".
\item Question: ``Is the secret in $D_{2}=\{H_4\}$?" \\
Answer: \; ``Yes".
\item Question: ``Is the secret in $D_{3}=\{H_1, H_2, H_3\}$?" \\
Answer: \; ``Yes".
\item Question: ``Is the secret in $D_{4}=\{H_1, H_2, H_4\}$?" \\
Answer: \; ``No".
\item Question: ``Is the secret in $D_{5}=\{H_4\}$?" \\
Answer: \; ``Yes".
\item Question: ``Is the secret in $D_{6}=\{H_4\}$?" \\
Answer: \; ``No".
\end{enumerate}

Denote with $\delta_i$ the formalization of the $i$-th question-and-answer, i.e.\\
\medskip
\begin{minipage}{\columnwidth/2}
 \begin{itemize}
 \item[]
 \item[] $\delta_1=H_3 \lor H_4$;
  \item[] $\delta_2=H_4$;
 \item[]$\delta_3=H_1\lor H_2 \lor H_3$;
\end{itemize}
\end{minipage}\hfill % maximize the horizontal separation
\begin{minipage}{\columnwidth/2}
 \begin{itemize}
  \item[]
 \item[]  $\delta_4=H_3$;
  \item[] $\delta_5=H_4$;
 \item[] $\delta_6=H_1 \lor H_2 \lor H_3$.
\end{itemize}
 \end{minipage}

Thus, the six moves in the USI game are captured formally by the multiset
\[ \Delta=\{H_3 \lor H_4, H_4, H_1 \lor H_2 \lor H_3, H_3,H_4,H_1 \lor H_2 \lor H_3\}.
\]
%Also for the rest of the paper, to enhance readability, the multisets representing the data will be indicated with single curly brackets.
Let us denote with $\Delta_i$ the multiset consisting of the formalizations of the first $i$ questions-and-answers according to the sequence just introduced, i.e. %\[\Delta_i=\{ \delta_j\, |\, 1\leq j \leq i\},\] and 
in our specific example:
\begin{itemize}
\item[] $\Delta_1=\{H_3 \lor H_4\}$;
\item[] $\Delta_2=\{H_3 \lor H_4, H_4\}$;
\item[] $\Delta_3=\{H_3 \lor H_4, H_4, H_1 \lor H_2 \lor H_3\}$;
\item[] $\Delta_4=\{H_3 \lor H_4, H_4, H_1 \lor H_2 \lor H_3, H_3\}$;
\item[] $\Delta_5=\{H_3 \lor H_4, H_4, H_1 \lor H_2 \lor H_3, H_3,H_4\}$;
\item[] $\Delta_6=\{H_3 \lor H_4, H_4, H_1 \lor H_2 \lor H_3, H_3,H_4,H_1 \lor H_2 \lor H_3\}$.
\end{itemize}
%\paolo{Non possiamo fare a meno di alcuni indici? Mi sembra un po' pesante da leggere l'esempio. Ad esempio, non mi è chiaro perché ci serve un indice distinto per numero di mosse, visto che è dato implicitamente dalla dimensione del multiset, e poi dopo ne facciamo comunque a meno.}
Therefore, after each question-and-answer we can compute the degree of rejection for each hypotheses in $\H$ according to \eqref{infiniteru} and \eqref{eq:degrejulam}. See Tables \ref{tab:exulam1} and \ref{tab:exulam2} for the computations of the degrees of rejection relative to this example. 
\begin{table*}[ht]
\begin{minipage}{\columnwidth/2}
    \centering
    \begin{tabular}{ccccc}
                       & \multicolumn{4}{c}{Hypotheses}         \\ \cline{2-5} 
\multicolumn{1}{c|}{}  & $H_1$ & $H_2$ & $H_3$ & \multicolumn{1}{c|}{$H_4$} \\ \hline
\multicolumn{1}{|c|}{$\rfu{}_{\delta_1}$} &  1  &  1  & 0   & \multicolumn{1}{c|}{0}   \\[5pt]
\multicolumn{1}{|c|}{$\rfu{}_{\delta_2}$} & 1   &  1  &  1  & \multicolumn{1}{c|}{0}   \\[5pt]
\multicolumn{1}{|c|}{$\rfu{}_{\delta_3}$} &  0  &  0  &  0  & \multicolumn{1}{c|}{1}   \\[5pt]
\multicolumn{1}{|c|}{$\rfu{}_{\delta_4}$} &  1  &  1  &  0  & \multicolumn{1}{c|}{1}   \\[5pt]
\multicolumn{1}{|c|}{$\rfu{}_{\delta_5}$} &  1  &   1 & 1   & \multicolumn{1}{c|}{0}   \\[5pt]
\multicolumn{1}{|c|}{$\rfu{}_{\delta_6}$} &   0 & 0   &  0  & \multicolumn{1}{c|}{1}   \\[5pt]\hline
\end{tabular}
\caption{Degrees of rejection computed on the single questions-and-answers $\delta_i$}
\label{tab:exulam1}
    \end{minipage}\hfill % maximize the horizontal separation
\begin{minipage}{\columnwidth/2}
    \centering
\begin{tabular}{ccccc}
                       & \multicolumn{4}{c}{Hypotheses}         \\ \cline{2-5} 
\multicolumn{1}{c|}{}  & $H_1$ & $H_2$ & $H_3$ & \multicolumn{1}{c|}{$H_4$} \\ \hline
\multicolumn{1}{|c|}{$\rfu{}_{\Delta_1}$} &  1  &  1  & 0   & \multicolumn{1}{c|}{0}   \\[5pt]
\multicolumn{1}{|c|}{$\rfu{}_{\Delta_2}$} & 2   &  2  &  1  & \multicolumn{1}{c|}{0}   \\[5pt]
\multicolumn{1}{|c|}{$\rfu{}_{\Delta_3}$} &  2  &  2  &  1  & \multicolumn{1}{c|}{1}   \\[5pt]
\multicolumn{1}{|c|}{$\rfu{}_{\Delta_4}$} &  $\infty$  &  $\infty$  &  1  & \multicolumn{1}{c|}{2}   \\[5pt]
\multicolumn{1}{|c|}{$\rfu{}_{\Delta_5}$} &  $\infty$  &   $\infty$ & 2   & \multicolumn{1}{c|}{2}   \\[5pt]
\multicolumn{1}{|c|}{$\rfu{}_{\Delta_6}$} &   $\infty$ & $\infty$   &  2  & \multicolumn{1}{c|}{$\infty$}   \\[5pt]\hline
\end{tabular}
\caption{Degrees of rejection computed on the sequence of questions-and-answers $\Delta_i$ }
\label{tab:exulam2}
 \end{minipage}
\end{table*}
Since $\rfu{}_{\Delta_6}(H_i)=\infty$ for $i=1, 2, 4$ and $\rfu{}_{\Delta_6}(H_3)=2$, i.e. all the hypotheses except for one have been rejected, it follows that $H_3$ is the secret. 
\end{example}

This example motivates our next Definition.

\begin{definition}[\urjt-consequence]\label{def:urj}
Let $\Delta$ be a multiset in $\fm_\H$,  $\f\in \fm_\H$, and $\rfu{}_{\Delta}(\f)$ be defined as in \eqref{infiniteru} and \eqref{eq:degrejulam}.
%, with \paolo{$r_\Delta(H_{s^*}) \leq m$ and $r_\Delta(H_i) \leq m+1$ for each $H_i \neq H_{s^*}$}. 
Then define
$$\Delta \urj \f\,  \text{ if }  \,T, H \models\f,\; \text{for every}\; H \in \maxh_{\Delta}. $$
\end{definition}
%The preconditions guarantee that $H_{s^*}$ cannot be rejected more than $m$ times and that the other hypotheses are not rejected more than $m+1$ times. Recall that $m$ is the maximum number of lies in the USI game.

Note that the fact we restrict data to answers in a USI game, effectively means that \urjt-consequence relates hypotheses.

A continuation of the Example \ref{ex:ulam1} illustrates Definition \ref{def:urj}. 

\begin{example}
Recall that $\H = \{H_1,H_2, H_3, H_4\}$, Nature can lie at most $m=2$ times, and the available data is formalized by the multiset $\Delta=\{H_3 \lor H_4, H_4, H_1 \lor H_2 \lor H_3, H_3,H_4,H_1 \lor H_2 \lor H_3\}$. For each $\Delta_i$ each  $\maxh_{\Delta_i}$ is computed as follows:\\
\begin{minipage}{\columnwidth/2}
 \begin{itemize}
 \item[] $\maxh_{\Delta_1}=\{H_3, H_4\}$;
  \item[] $\maxh_{\Delta_2}=\{H_4\}$;
 \item[]$\maxh_{\Delta_3}=\{H_3, H_4\}$;
\end{itemize}
\end{minipage}\hfill % maximize the horizontal separation
\begin{minipage}{\columnwidth/2}
 \begin{itemize}
 \item[]
 \item[]  $\maxh_{\Delta_4}=\{H_3\}$;
  \item[] $\maxh_{\Delta_5}=\{H_3, H_4\}$;
 \item[] $\maxh_{\Delta_6}=\{H_3\}$.
 \item[]
\end{itemize}
 \end{minipage}
Note that for every $i$, $j$ s.t. $i \not = j$ we have $T, H_i \models \neg H_j$. Therefore, the following consequence relations holds
\begin{itemize}
\item[]$\Delta_1 \urj \neg H_1 \land \neg H_2$;
\item[]$\Delta_2 \urj \neg H_3$;
\item[]$\Delta_6 \urj \neg H_1 \land \neg H_2 \land \neg H_4$.
\end{itemize}
{On the other hand, we have 
\begin{itemize}
\item[]$\Delta_1 \not\urj  \neg H_3$;
\item[] $\Delta_1 \not\urj  \neg H_4 $;
\item[]$\Delta_2 \not\urj \neg H_4$; 
\item[]$\Delta_6 \not \urj \neg H_3$.
\end{itemize}
}
%\comment{Aggiungere}{uno o due non-esempi}
\end{example}
Our next lemma collects immediate consequences of Definition \ref{def:urj} which will be useful later on.
\begin{lemma}
Let $\H=\{H_1, \dots, H_n\}$ and let $m \geq 0$ be the number of lies allowed to Nature. The following hold:
\begin{enumerate}
\item Suppose that $\maxh_{\Delta}=\maxh_{\Delta'}$, then $\Delta \urj \f$ if and only if $\Delta' \urj \f$.
\item If $\Delta$ rejects $H_i$, then $\Delta \urj \neg H_i$.
\item If $\Delta$ rejects $H_i$, then $\Delta' \urj \neg H_i$ for every $\Delta' \supseteq \Delta$ 
\item $H_{s^*}$ is the secret if and only if for some $\Delta \in \fm$ consistent with $T$, we have $\maxh_{\Delta}=\{H_{s^*}\}$ and $\rfu{}_\Delta(H_i) = \infty$ for all $H_i \not = H_{s^*}$.
\end{enumerate}
\end{lemma}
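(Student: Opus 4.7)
The plan is to verify each of the four claims as direct consequences of Definition \ref{def:urj} together with the rejection function $\rfu{}$ defined in \eqref{infiniteru}--\eqref{eq:degrejulam}, relying throughout on the mutual-exclusivity of hypotheses enforced by $T$. For (1), the definition of $\Delta \urj \f$ depends on $\Delta$ only through $\maxh_\Delta$, so the equivalence is immediate. For (2), if $\Delta$ rejects $H_i$ then $\sum_{\delta\in\Delta}\rfu{}_\delta(H_i)>m$. If $H_i$ is not the secret, clause (2) of \eqref{infiniteru} yields $\rfu{}_\Delta(H_i)=\infty$, so $H_i\notin\maxh_\Delta$. If instead $H_i$ coincides with the secret, clause (1) sets $\rfu{}_\Delta(H_j)=\infty$ for every $j$, hence $\maxh_\Delta=\emptyset$ and Lemma \ref{prop:rj-explosive} gives $\Delta\urj\neg H_i$ vacuously. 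In the non-vacuous case every $H\in\maxh_\Delta$ is distinct from $H_i$, so $T,H\models\neg H_i$ by mutual exclusivity, delivering $\Delta\urj\neg H_i$.

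For (3), I would use the monotonicity of additive aggregation: if $\Delta\subseteq\Delta'$ as multisets then $\sum_{\delta\in\Delta'}\rfu{}_\delta(H_i)\geq\sum_{\delta\in\Delta}\rfu{}_\delta(H_i)>m$, so $\Delta'$ also rejects $H_i$, and (2) applies. The forward direction of (4) I would prove by exhibiting an explicit witness: take $\Delta$ consisting of $m+1$ copies of $\neg H_i$ for each $i\neq s^*$. Each $\neg H_i$ is consistent with $T$ (e.g.\ $T\wedge H_{s^*}$ is a model), and \eqref{eq:rr} gives $\rfu{}_{\neg H_i}(H_k)=1$ iff $k=i$. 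Hence for $i\neq s^*$ we obtain $\sum_{\delta\in\Delta}\rfu{}_\delta(H_i)=m+1>m$, triggering clause (2) of \eqref{infiniteru} and yielding $\rfu{}_\Delta(H_i)=\infty$, while $\sum_{\delta\in\Delta}\rfu{}_\delta(H_{s^*})=0$ so clause (1) does not apply and $\rfu{}_\Delta(H_{s^*})=0$. Therefore $\maxh_\Delta=\{H_{s^*}\}$, as required.

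The backward direction of (4) is where a real argument is needed, and this is the point I expect to be the main, if still modest, obstacle. I would argue by contraposition: suppose the actual secret is some $H_{sec}\neq H_{s^*}$. By hypothesis $\rfu{}_\Delta(H_{sec})=\infty$, and since clause (2) of \eqref{infiniteru} explicitly excludes the secret, this infinity must come from clause (1), i.e.\ $\sum_{\delta\in\Delta}\rfu{}_\delta(H_{sec})>m$. But once clause (1) is triggered it forces $\rfu{}_\Delta(H_j)=\infty$ \emph{uniformly} for every $H_j\in\H$, including $H_{s^*}$; this contradicts $H_{s^*}\in\maxh_\Delta$, which requires finite rejection. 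Hence $H_{sec}=H_{s^*}$. The key observation behind this step is the asymmetric role played by the secret in \eqref{infiniteru}: clause (1) is the only route to an infinite value for the secret, and it always propagates globally, which is precisely what enables a purely data-theoretic characterisation of the secret.
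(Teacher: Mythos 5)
Your proof is correct. The paper itself offers no proof of this lemma --- it is stated as a collection of ``immediate consequences'' of Definition \ref{def:urj} --- so there is no official argument to compare against; your verification simply supplies the routine checks. Parts (1)--(3) are handled exactly as one would expect (dependence on $\maxh_\Delta$ only, infiniteness forcing exclusion from $\maxh_\Delta$ plus the explosive case via Lemma \ref{prop:rj-explosive}, and monotonicity of the nonnegative sum), and your treatment of part (4) is the one place where any real content lives: the explicit witness of $m+1$ copies of $\neg H_i$ for each non-secret $H_i$ works as you compute, and the backward direction correctly isolates the asymmetry in \eqref{infiniteru} --- the secret can only acquire an infinite degree through the first clause, which then propagates to every hypothesis and would empty $\maxh_\Delta$, contradicting $\maxh_\Delta=\{H_{s^*}\}$.
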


To show that indeed $\urj$ arises from the blueprint $\rj$ consequence relation we need to formulate a slightly stronger version of the rules (RWE) and (LLE)  where the theory $T$ is mentioned explicitly. (Recall $T$ expresses that the hypotheses form a partition containing the unique \textit{secret}). The resulting amendments are presented in Table \ref{tab:ruleswiththeory}, which also contains the strengthening of $\text{(AND)}_l$ and $\text{(AND)}_l^{con}$.
Note that, with a slight abuse of notation, we do not change the name of the rules compared to Table \ref{tab:mr}.

\begin{table}[h!]
\begin{tabular}{cc}
 \infer[\text{(RWE)}]{\Delta \nm \p} {\Delta \nm \f & &T,\f\models\p} & 
    \infer[\text{(LLE)}]{\Delta,\d\nm\f}{\Delta,\c\nm \f & &T\models\c\leftrightarrow\d } \\[2.0em]
 \infer[\text{(AND)}_l]{\Delta,\c\land\d\nm \f } {\Delta,\c,\d \nm\f & T,\neg\c,\neg \d \models \bot } & \infer[\text{(AND)}_l^{con}]{\Delta,\c,\d \nm\f } { \Delta,\c\land\d\nm \f &  T,\neg\c,\neg \d \models \bot }  \\
\end{tabular}
\caption{Rules for mutually exclusive and exhaustive hypotheses}\label{tab:ruleswiththeory}
\end{table}

% \infer[\text{(AND)}]{\Delta \rj   \f \land \p } {\Delta\rj  \f & & \Delta \rj  \p}  

As an immediate consequence of Proposition \ref{genmon} above, both (AMON) and (MMON) fail for $\urj$, as expected. Moreover a  straightforward adaptation of the arguments provided by the proof of Lemma \ref{lemma:rj-soundness} and Lemma \ref{lemma:prj-soundness} will yield the following Lemma.

\begin{lemma}\label{lemma:urj-soundness}
$\urj$ satisfies the rules in Table \ref{tab:ruleswiththeory}.
\end{lemma}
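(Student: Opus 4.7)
The plan is to mimic, rule by rule, the pattern of Proposition \ref{lemma:rj-soundness} and Proposition \ref{lemma:prj-soundness}, with the observation that in the $\urjt$-setting the single-formula rejection degree $\rfu{}_{\delta}(H)$ is controlled entirely by classical entailments of the form $T,H\models \neg\delta$ and $T\models \neg\delta$, as recorded in \eqref{eq:rr}.

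For (RWE), if $\Delta\urj\f$ and $T,\f\models\p$, then for each $H\in\maxh_\Delta$ we have $T,H\models\f$ by Definition \ref{def:urj}; transitivity of classical consequence then gives $T,H\models\p$, hence $\Delta\urj\p$. For (LLE), if $T\models\c\leftrightarrow\d$ then $T,H\models\neg\c$ iff $T,H\models\neg\d$ for every $H\in\H$, and similarly $T\models\neg\c$ iff $T\models\neg\d$; so by \eqref{eq:rr} $\rfu{}_\c(H)=\rfu{}_\d(H)$ for every $H$. Additivity via \eqref{eq:degrejulam} then yields $\rfu{}_{\Delta,\c}(H)=\rfu{}_{\Delta,\d}(H)$, whence $\maxh_{\Delta,\c}=\maxh_{\Delta,\d}$ and the conclusion follows.

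The substantive work is in (AND)$_l$ and (AND)$_l^{con}$, where the common task is to establish
\[\rfu{}_{\c\land\d}(H)=\rfu{}_{\c}(H)+\rfu{}_{\d}(H)\]
for every $H\in\H$, under the hypothesis $T,\neg\c,\neg\d\models\bot$, equivalently $T\models\c\lor\d$. The key observation is that since each $H\in\H$ is a propositional variable and $T$ encodes that exactly one hypothesis is true, the theory $T\cup\{H\}$ is classically complete on $\fm_\H$: for every $\xi\in\fm_\H$ either $T,H\models\xi$ or $T,H\models\neg\xi$. A four-case analysis on the truth values of $\c$ and $\d$ modulo $T,H$ then shows that the case $T,H\models\neg\c\land\neg\d$ is excluded by the hypothesis together with the consistency of $T,H$, and in the remaining three cases both sides of the displayed identity take the same value in $\{0,1\}$. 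Summing over $\Delta$ via \eqref{eq:degrejulam} yields $\rfu{}_{\Delta,\c,\d}(H)=\rfu{}_{\Delta,\c\land\d}(H)$ for each $H$, hence $\maxh_{\Delta,\c,\d}=\maxh_{\Delta,\c\land\d}$, and both directions of the rule follow from Definition \ref{def:urj}.

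The main obstacle to isolate is the edge subcase $T\models\neg(\c\land\d)$, which escapes the four-case analysis above: here $\rfu{}_{\c\land\d}(H)=\infty$ for every $H$ by \eqref{eq:rr}, so $\maxh_{\Delta,\c\land\d}=\emptyset$, and Lemma \ref{prop:rj-explosive} delivers the conclusion of (AND)$_l$ at once, while for (AND)$_l^{con}$ this subcase is to be read under the standing satisfiability assumption on data flagged after Definition \ref{def:degrej}. Beyond this, the argument is a routine adaptation of the proofs of Lemma \ref{lemma:rj-soundness} and Lemma \ref{lemma:prj-soundness}, with the completeness-on-$\fm_\H$ observation playing the structural role that probabilistic additivity \eqref{eq:additivity-conditional} played in the latter.
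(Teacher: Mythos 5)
Your proof is correct and follows essentially the route the paper itself takes: the paper offers no detailed argument for this lemma, stating only that it is a straightforward adaptation of the proofs of Proposition \ref{lemma:rj-soundness} and Proposition \ref{lemma:prj-soundness}, and your rule-by-rule reconstruction --- with the four-case analysis of $\c,\d$ modulo the complete theory $T\cup\{H\}$ playing the role that probabilistic additivity \eqref{eq:additivity-conditional} plays for $\prj$ --- is precisely that adaptation carried out in full. Your isolation of the edge subcase $T\models\neg(\c\land\d)$, where the additivity identity for $\rfu{}$ breaks down and $(\text{AND})_l^{con}$ must be rescued by reading the tacit satisfiability-of-data convention as consistency with $T$, goes beyond what the paper records and is a genuine point in favour of your write-up.
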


In light of this we can observe that $\urj$ arises from $\rj$ by taking $r=\rfu{}$. To simplify notation in the remainder of this Section we will just write $r$ and speak of the \textit{degree of rejection}.

\subsection{Valid rules of inference for $\urj$}
Since its language allows for a formula to be both on the left and on the right of the consequence relation symbol,  we can ask whether $\urj$ satisfies (multiset versions of)  (CUT), (CMO), and (RMO) described  in Table  \ref{tab:urj}. Quite interestingly, $\urj$ also satisfies a form of constrained monotonicity which to the best of our knowledge has not been investigated before, and which we term (UMO), for USI games:

\[\infer[(\umon)]{\Delta,\f\nm\p}{\Delta\nm \p &  \{\Delta\setminus\{\delta\}\nm  \p\}_{\delta\in\Delta. }}    \]

Before proving its validity, we illustrate the  rationale for (\umon). The non-monotonic behaviour of $\urj$ is ultimately due to the fact that new questions-and-answers can change the set of least rejected hypotheses. Note however that a \emph{single} question-and-answer may alter the degree of rejection of each hypothesis by at most by one. Hence,  when the degree of rejection of any least rejected hypothesis, say $H$, is at least two units smaller than the others in $\H$, we can be sure that adding just one question-and-answer, say $\f$, will not change the status of $H$ as a least rejected hypothesis. This condition on the ``distance'' between degrees of rejection of least rejected hypotheses and the others is captured by the second premise of (\umon). If the degree of rejection of the least rejected hypotheses are smaller by at the least two units than the others in $\H$, then removing any occurrence of a formula from the multiset $\Delta$ will not change $\maxh_{\Delta}$,  and hence the formulas that the multiset entails. 

\begin{example}
Suppose
\[ H_1,H_1\, \urj\,H_1.\]
Here $\maxh_{\{H_1,H_1\}}=\{H_1\}$, {$r_{\{H_1,H_1\}}(H_1) =0 $} and $r_{\{H_1,H_1\}}(H_i) = 2$ for any $H_i\in \H$, with $i\neq 1$. This ensures that \[H_1,H_1,\f\, \urj\, H_1 \]
for any formula $\f$, since  we may have, at worst,
$r_{\{H_1,H_1,\f\}}(H_1) =1$, while $r_{\{H_1,H_1,\f\}}(H_i)\geq 2$ for any $i \neq 1$.  $H_1$ will therefore continue to be the only formula in $\maxh_{\{H_1,H_1,\f\}}$.
The above reasoning is thus captured by the following application of (\umon): %(where we omit the condition on the secret, for simplicity):
\[\infer{H_1,H_1,H_2 \, \urj\, H_1}{H_1,H_1\, \urj\, H_1 & H_1\, \urj\, H_1 } \]

For a non-example, note that 
\[\infer{H_1,H_2\, \urj\, H_1}{ H_1\, \urj\, H_1}\]
is not valid.  To have a correct instance of (\umon), we should also have $\urj H_1$, which clearly does not hold. In the above consequence  we have $H_1\in \maxh_{H_1}$, but the difference between  $r_{H_1}(H_2)=1$   and $r_{H_1}(H_1) = 0$ is not strictly greater  than 1. In the consequence we now have  $r_{\{H_1,H_2\}}(H_1) = r_{\{H_1,H_2\}}(H_2) =1$, and both $H_1$ and $H_2$ belong to $\maxh_{\{H_1,H_2\}}$, hence {$\displaystyle{H_1,H_2\not\urj H_1}$}.

\end{example}

%us finally recall the form of the cut rule
%\[\infer[\text{(CUT)}]{\Delta \rj \p}{ \Delta,\f\rj \p & \Delta\rj\f}\]

%All of the considered rules are shown to hold in the next lemma.

\begin{table}
\begin{tabular}{cc}
\infer[\text{(REF)}]{\f\rj\f}{} & \\[2.0em]
\infer[\text{(CMO)}] {\Delta,\f \urj \p}{\Delta\urj \p &  \Delta\urj\f }
& \infer[\text{(CUT)}]{\Delta \rj \p}{ \Delta,\f\rj \p & \Delta\rj\f} \\[2.0em]
\infer[(\rmon)]{\Delta,\f\nm\p}{\Delta\nm \p & \Delta\not\nm \neg \f}
&
\infer[(\umon)]{\Delta,\c\nm\p}{\Delta\nm \p &  \{\Delta\setminus\{\delta\}\nm  \p\}_{\delta\in\Delta }} \\[2.0em]
\end{tabular}
\caption{Rules satisfied by the System $\urj$}
\label{tab:urj}
\end{table}

 \begin{restatable}{proposition}{Fifthprop}\label{soundmr}
%\emph{(CUT)}, \emph{(\rmon)} and \emph{(\umon)} hold for 
$\urj$ satisfies the rules in Table \ref{tab:urj}.
\end{restatable}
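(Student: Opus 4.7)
The proof proceeds rule by rule in Table~\ref{tab:urj}, repeatedly exploiting that $r_\delta(H)$ takes values in $\{0,1,\infty\}$ and that, given $T$, each hypothesis $H$ determines a complete propositional world, so $r_\delta(H)=0$ if and only if $T, H \models \delta$. Reflexivity (REF) is then immediate: $\maxh_{\{\f\}}$ is precisely $\{H \in \H : T, H \models \f\}$, which is non-empty whenever $\f$ is satisfiable modulo $T$, and each such $H$ satisfies $T, H \models \f$ by construction.

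For (CMO) and (CUT) I would observe that $\Delta \urj \f$ forces $r_\f(H) = 0$ for every $H \in \maxh_\Delta$, so adding $\f$ leaves the degrees of these hypotheses unchanged while it can only raise (or stabilise) the degrees of all others. This yields $\maxh_{\Delta,\f} = \maxh_\Delta$ in the generic case, with a separate treatment of the corner case where adding $\f$ over-rejects the secret $H_{s^*}$: there both sides produce an empty $\maxh$ and the conclusion holds vacuously by Lemma~\ref{prop:rj-explosive}. Both (CMO) and (CUT) then reduce to applying $\Delta \urj \p$, respectively $\Delta,\f \urj \p$, on this common set of minimally-rejected hypotheses.

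For (RMO), the premise $\Delta \not\urj \neg\f$ supplies some $H^* \in \maxh_\Delta$ with $T, H^* \not\models \neg\f$; completeness of $T \wedge H^*$ upgrades this to $T, H^* \models \f$, and hence $r_\f(H^*) = 0$. A short integer-arithmetic comparison, using that each finite $r_\f$ is $0$ or $1$, forces every $H \in \maxh_{\Delta,\f}$ to already lie in $\maxh_\Delta$, after which $\Delta \urj \p$ delivers $T, H \models \p$.

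The main obstacle is (UMO). Its guiding observation is that, because $r_\delta(H) \in \{0,1,\infty\}$, a single question-and-answer perturbs every finite $r(H)$ by at most one. Fix $H \in \maxh_{\Delta,\f}$; if $H \in \maxh_\Delta$ we are done by $\Delta \urj \p$, so suppose not. The $\pm 1$ bound, combined with $H$ being minimal in $\Delta,\f$, forces the tight identities $r_\Delta(H) = \min_{H_0 \in \maxh_\Delta} r_\Delta(H_0) + 1$, $r_\f(H) = 0$, and $r_\f(H_0) = 1$ for every $H_0 \in \maxh_\Delta$. The plan is then to exhibit a single $\delta \in \Delta$ with $r_\delta(H) = 1$ and $r_\delta(H_0) = 0$ for every $H_0 \in \maxh_\Delta$; removing such a $\delta$ drops $r(H)$ exactly to $r_\Delta(H_0)$ without disturbing the $\maxh_\Delta$-degrees, so $H \in \maxh_{\Delta \setminus \{\delta\}}$ and the second-premise hypothesis $\Delta \setminus \{\delta\} \urj \p$ hands us $T, H \models \p$. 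The genuinely delicate step is the combinatorial existence of such a $\delta$, which I would handle by a counting argument on the multiset $\Delta$ driven by the identities above, together with careful bookkeeping of when finite degrees flip to $\infty$ as the secret $H_{s^*}$ approaches over-rejection, since such a flip can collapse $\maxh$ and render some conclusions vacuously true.
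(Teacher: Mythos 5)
Your handling of (REF), (CMO), (CUT) and (RMO) is essentially the paper's route: the paper simply refers back to the arguments for the blueprint relation in Proposition~\ref{prop:rjaddrules}, and your observation that $\Delta\urj\f$ forces $\maxh_{\Delta,\f}=\maxh_\Delta$ (all degrees finite) is exactly what those arguments use. One small inaccuracy: for (CUT) the corner case in which adding $\f$ over-rejects the secret does \emph{not} make both sides vacuous, since $\maxh_{\Delta,\f}$ may be empty while $\maxh_\Delta$ is not; the conclusion $\Delta\urj\p$ is then a genuine obligation. The paper glosses over this as well, so I will not press the point.

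The genuine gap is in (\umon), precisely at the step you flag and defer. Your argument needs a single $\delta\in\Delta$ with $r_\delta(H)=1$ and $r_\delta(H_0)=0$ simultaneously for \emph{every} $H_0\in\maxh_\Delta$, and such a $\delta$ need not exist. Take $\H=\{H_1,H_2,H_3\}$, $m=3$, and the multiset $\Delta=\{H_2,H_3\}$: then $r_\Delta(H_1)=2$, $r_\Delta(H_2)=r_\Delta(H_3)=1$, so $\maxh_\Delta=\{H_2,H_3\}$ and $\Delta\urj\neg H_1$; moreover $\Delta\setminus\{H_2\}=\{H_3\}$ and $\Delta\setminus\{H_3\}=\{H_2\}$ each yield $\urj\neg H_1$, so both premises of (\umon) hold with $\p=\neg H_1$. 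With $\f=H_1$ and $H=H_1$ all of your ``tight identities'' are satisfied, yet each element of $\Delta$ rejects $H_1$ together with exactly one of $H_2,H_3$, so no $\delta$ with the required uniform profile exists. Worse, $r_{\Delta,H_1}$ is identically $2$, hence $H_1\in\maxh_{\Delta,H_1}$ and $\Delta,H_1\not\urj\neg H_1$: this instance of (\umon) itself fails, so the existence claim you postpone is not merely hard but false. The paper's own proof circumvents it by extracting $\ol{\delta}$ relative to a single fixed $H'\in\maxh_\Delta$ (which does exist by counting, since $r_\Delta(H)=r_\Delta(H')+1$), but its next step, ``hence $H'\in\maxh_{\Delta\setminus\{\ol{\delta}\}}$'' followed by a claimed contradiction from $H'\models\p$, does not go through on the same example (with $\ol{\delta}=H_2$ one gets $\maxh_{\{H_3\}}=\{H_3\}$, and $H'\models\p$ was already known, so nothing is contradicted). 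In short: you have correctly isolated the crux, but the second premise of (\umon) does not force the least-rejected hypotheses to sit two units below the rest once $\maxh_\Delta$ is not a singleton, and no counting argument on $\Delta$ can supply the $\delta$ your plan requires.
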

%\comment{}{In appendice. ho messo CMO in questa proposizione perché era un po' fuorviante avere la tabella delle regole soddisfatte da URJ senza che CMO ci apparisse, visto che è centrale}

Thus $\urj$ is \lq\lq more monotonic" than  rational consequence relations. So it captures a rather minimal departure from classical deduction in which non monotonic behaviour arises as a consequence of lies in the USI game, which corresponds to a variety of errors that scientists can make in experimental research. This vicinity with classical deduction can be interpreted as data-driven reasoning being a good approximation of the  gold standard of mathematical proof.

\subsection{Disjunction in the premisses and Modus tollens hold for $\urj$}\label{sec:MT}
In addition to satisfying (UMO) and (RMO), $\urj$ goes beyond $\prj$ by satisfying also the $\text{(OR)}$ rule. In light of our comments on Lemma \ref{failsxor} this welcome feature of $\urj$ should not come as a surprise upon careful comparison of \eqref{eq:s} and \eqref{eq:rr}. Whilst the semantics governing $\prj$ is quantitative (i.e. the full range of probability functions plays a role in \eqref{eq:s}), $\urj$ is qualitative (i.e. only the extreme points of   $[0,1]$ play any role in \eqref{eq:rr}).

\begin{restatable}{lemma}{Firstlemma}\label{lemma:vee}
$\urj$ satisfies
 \[\infer[\emph{(OR)}]{\Delta,\c\lor\d\nm\p.}{\Delta,\c\nm\p & \Delta,\d\nm\p}\]
 \end{restatable}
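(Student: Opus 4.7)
The strategy is to establish the set-theoretic inclusion
\[
\maxh_{\Delta,\c\lor\d}\;\subseteq\;\maxh_{\Delta,\c}\;\cup\;\maxh_{\Delta,\d}.
\]
Granted this, the conclusion is immediate: any $H^{*}\in\maxh_{\Delta,\c\lor\d}$ lies in one of the two sets on the right, and the corresponding premise then delivers $T,H^{*}\models\p$, so $\Delta,\c\lor\d\urj\p$ by Definition~\ref{def:urj}. If $\maxh_{\Delta,\c\lor\d}$ happens to be empty, the conclusion is vacuous by Lemma~\ref{prop:rj-explosive} and there is nothing more to do.

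The engine of the inclusion is the pointwise identity for a \emph{single} question-and-answer,
\[
r^{u}_{\c\lor\d}(H)\;=\;\min\bigl(r^{u}_{\c}(H),\,r^{u}_{\d}(H)\bigr),
\]
which I would read straight off equation~\eqref{eq:rr} by noting that $T,H\models\neg(\c\lor\d)$ iff $T,H\models\neg\c$ \emph{and} $T,H\models\neg\d$, and similarly for the $T\models\neg(\c\lor\d)$ case that triggers the value $\infty$. Additive aggregation over $\Delta$ then lifts this identity, in the finite regime, to
\[
r^{u}_{\Delta,\c\lor\d}(H)\;=\;\min\bigl(r^{u}_{\Delta,\c}(H),\,r^{u}_{\Delta,\d}(H)\bigr).
\]
Picking $\gamma\in\{\c,\d\}$ that achieves this minimum at $H^{*}$, the pointwise inequality $r^{u}_{\Delta,\gamma}(H)\geq r^{u}_{\Delta,\c\lor\d}(H)$ valid for every $H$ forces
\[
r^{u}_{\Delta,\gamma}(H^{*})\;=\;r^{u}_{\Delta,\c\lor\d}(H^{*})\;\leq\;r^{u}_{\Delta,\c\lor\d}(H)\;\leq\;r^{u}_{\Delta,\gamma}(H)
\]
for every $H$, so $H^{*}\in\maxh_{\Delta,\gamma}$ as wanted.

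\textbf{Main obstacle.} The delicate point, and the step I expect to be the technical heart of the argument, is the bookkeeping forced by equation~\eqref{infiniteru}: the additive aggregation is overridden by $\infty$ whenever the secret has been temporarily rejected more than $m$ times (in which case \emph{every} $r^{u}_{\Delta,\cdot}$-value collapses to $\infty$), or whenever some non-secret hypothesis has individually exceeded the threshold. One has to check that as soon as $r^{u}_{\Delta,\c\lor\d}(H^{*})$ is finite, the $\gamma\in\{\c,\d\}$ realising the minimum at $H^{*}$ also keeps $r^{u}_{\Delta,\gamma}$ below the relevant thresholds both at $H^{*}$ and at $H_{s^{*}}$, so that the min-identity above continues to hold at this specific hypothesis. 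This is precisely where the qualitative (0/1-valued) nature of $r^{u}$ emphasised in~\eqref{eq:rr} carries the argument, and where the proof parts company with the quantitative $\prj$ of Section~\ref{sec:prj}: for $r^{l}$ the analogous identity fails because $P(\neg\c\land\neg\d\mid H)$ cannot be recovered from $P(\neg\c\mid H)$ and $P(\neg\d\mid H)$, which is exactly what sank (XOR) in Lemma~\ref{failsxor}.
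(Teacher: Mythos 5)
Your proposal is correct and rests on exactly the same key fact as the paper's proof, namely the single-formula identity $r^{u}_{\c\lor\d}(H)=\min\bigl(r^{u}_{\c}(H),r^{u}_{\d}(H)\bigr)$ read off from \eqref{eq:rr}, lifted additively to $\Delta$ and used to establish $\maxh_{\Delta,\c\lor\d}\subseteq\maxh_{\Delta,\c}\cup\maxh_{\Delta,\d}$; the only difference is that you argue directly by picking the disjunct realising the minimum at $H^{*}$, whereas the paper runs the same inequalities as a proof by contradiction with two witnesses $H'$ and $H''$. The $\infty$-override bookkeeping you flag as the main obstacle is left equally informal in the paper's own proof, so no gap relative to it.
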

See the Appendix for the proof.

The qualitative nature of the  degree of rejection underpinning  $\urj$ gives us also a version of \textit{modus tollens}. This conspicuous rule, which to some is the key pattern in scientific reasoning, typically fails for quantitative inference. A fact that has not gone unnoticed to some critics of NHST, as we recalled in Section \ref{sec:NHST} above. Hence the following Lemma, proven in the Appendix, is of  conceptual interest.

\begin{restatable}{lemma}{Secondlemma}\label{mt}
$\urj$ satisfies 
\[\infer[\emph{(MT)}]{\Delta \urj  \neg \varphi.}{\Delta, \varphi \urj \psi & \Delta \urj  \neg \psi}
 \]
\end{restatable}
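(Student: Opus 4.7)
The plan is to prove the contrapositive via the definition: assuming $\Delta, \varphi \urj \psi$ and $\Delta \urj \neg \psi$, show that $T, H \models \neg \varphi$ for every $H \in \maxh_\Delta$. The strategy exploits the fact that the $\urj$-rejection degree of a single formula takes only the values $0$, $1$ or $\infty$ by \eqref{eq:rr}, which is the crucial qualitative feature behind the failure of MT for $\prj$ but its validity here.

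First I would dispose of two trivial cases. If $\maxh_\Delta = \emptyset$, the explosiveness of RJ-consequence (Lemma \ref{prop:rj-explosive}, inherited by $\urj$) yields $\Delta \urj \neg \varphi$ directly. If $T \models \neg \varphi$, then $T, H \models \neg \varphi$ for every hypothesis $H$, and again $\Delta \urj \neg \varphi$ holds.

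For the main case, fix $H \in \maxh_\Delta$ and suppose for contradiction that $T, H \not\models \neg \varphi$ (and $T \not\models \neg \varphi$). Then \eqref{eq:rr} gives $r_\varphi(H) = 0$. The key technical step is to verify $H \in \maxh_{\Delta, \varphi}$. For $H$ itself, I would argue $r_{\Delta,\varphi}(H) = r_\Delta(H)$: since $H \in \maxh_\Delta$ we have $r_\Delta(H) < \infty$, so by \eqref{infiniteru} neither the ``secret rejected'' nor the ``$H \neq H_{s^*}$ rejected'' clause fires in $\Delta$, and adding $\varphi$ with contribution $0$ cannot newly trigger either clause. For any other $H' \in \H$, monotonicity of the sum (with $\infty$ absorbing) yields $r_{\Delta,\varphi}(H') \geq r_\Delta(H') \geq r_\Delta(H) = r_{\Delta,\varphi}(H)$; the case $r_\Delta(H') = \infty$ via the secret clause is ruled out by $H \in \maxh_\Delta$.

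Once $H \in \maxh_{\Delta,\varphi}$ is secured, the hypothesis $\Delta, \varphi \urj \psi$ delivers $T, H \models \psi$, while $\Delta \urj \neg \psi$ delivers $T, H \models \neg \psi$. Together these force $T \wedge H$ to be unsatisfiable, contradicting the shape of $T$ (which is consistent with each $H_i \in \H$ via the valuation making exactly $H_i$ true). Hence $T, H \models \neg \varphi$, completing the proof. The main obstacle I anticipate is the careful bookkeeping of the $\infty$ clauses in \eqref{infiniteru} when passing from $\Delta$ to $\Delta, \varphi$; everything else is a direct consequence of the qualitative, $\{0,1,\infty\}$-valued nature of $r_\varphi$.
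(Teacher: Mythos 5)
Your proof is correct and follows essentially the same route as the paper's: assume some $H\in\maxh_{\Delta}$ with $T,H\not\models\neg\varphi$, observe that $r_{\varphi}(H)=0$ so that $H$ survives into $\maxh_{\Delta,\varphi}$, and derive the contradiction $T,H\models\psi$ together with $T,H\models\neg\psi$. Your bookkeeping of the $\infty$-clauses is in fact more careful than the paper's one-line assertion that $H\in\maxh_{\Delta,\varphi}$; the only imprecision is your claim that adding $\varphi$ with contribution $0$ ``cannot newly trigger either clause'', since the first clause of \eqref{infiniteru} is governed by $r_{\varphi}(H_{s^*})$ rather than by $r_{\varphi}(H)$ --- the edge case where $\varphi$ gives the secret its $(m+1)$-st temporary rejection makes every degree infinite and empties $\maxh_{\Delta,\varphi}$ --- but this is a gap the paper's own proof shares and which is excluded only under a finiteness proviso of the kind imposed in Theorem \ref{thm:iminimal}.
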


%\paolo{{On the other hand, $\urj$ does not satisfy the rule \emph{$(\text{\emph{NEG}}_n)$}, which characterize $\arj$. This can be seen by  applying the same argument for failure of \emph{$(\text{\emph{NEG}}_n)$} in $\prjt$ within the proof of Proposition \ref{negn}.}}

We now turn to the main result of this paper.

\subsection{A completeness result for $\urj$}
We make our way now to establishing a completeness result for \urjt-consequence relations. In preparation for that, the next Lemma pins down a useful normal form for arbitrary consequences under $\urj$. We will show that whenever  $\urj$ relates a multiset of premises and a conclusion, we can find an equivalent set of consequence relations $\urj$, with ``simpler" premises and conclusion. This will be useful in establishing the argument by cases which intervenes in the proof of Theorem \ref{thm:iminimal}. 

\begin{lemma}\label{normalform}%\footnote{\hykel{Mi pare che così si più facile da leggere}}

Let $\Delta$ be a multiset of formulas in $\fm$ and $\p\in \fm $. The following are equivalent:
  \begin{enumerate}
  \item $\Delta\urj\p$;
\item  A set of consequence relations of the form $\neg H_1^{l_1}, \dots, \neg H_n^{l_n} \urj \neg H_j$ holds, with  multiplicity indices $l_1, \dots, l_n$ greater or equal than $0$ and $j\in \{1,\dots,n\}$.
\end{enumerate}
\end{lemma}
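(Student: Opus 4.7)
The plan is to normalise $\Delta \urj \p$ by rewriting both the premises and the conclusion into the canonical form indicated in (2), using only rules already shown sound for $\urj$ (in Lemma \ref{lemma:urj-soundness} and Proposition \ref{soundmr}). The two directions of the equivalence are then obtained by running the same normalisation forwards and backwards.

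Normalising the premises. Every $\delta \in \fm$ consistent with $T$ is, modulo $T$, logically equivalent to some $\bigwedge_{i \in J_\delta} \neg H_i$ with $J_\delta \subseteq \{1,\dots,n\}$ (the empty conjunction being $\top$, which occurs precisely when $T\models\delta$). Using the $T$-aware version of (LLE) in Table \ref{tab:ruleswiththeory}, I would replace each $\delta \in \Delta$ with its canonical form. To split a non-trivial conjunction $(\neg H_{i_1})\wedge\bigl(\bigwedge_{i\in J_\delta\setminus\{i_1\}}\neg H_i\bigr)$ into two separate premises I invoke (AND)$_l^{con}$: its side condition requires $T,\neg\neg H_{i_1},\neg\bigwedge_{i\in J_\delta\setminus\{i_1\}}\neg H_i\models\bot$, i.e.\ $T,H_{i_1},\bigvee_{i\in J_\delta\setminus\{i_1\}}H_i\models\bot$, which is immediate from the mutual-exclusivity clauses in $T$. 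Iterating splits $\bigwedge_{i\in J_\delta}\neg H_i$ into $\{\neg H_i\}_{i\in J_\delta}$. Any $\delta$ equivalent to $\top$ modulo $T$ contributes no $\neg H_i$'s; it can be added to or removed from the premises using (CUT) and (CMO) together with $\Delta\urj\top$ (which follows from (REF) and (RWE)). Running the opposite direction is symmetric, using (AND)$_l$ in place of (AND)$_l^{con}$ to merge. The net effect is that $\Delta$ and some multiset $(\neg H_1)^{l_1},\dots,(\neg H_n)^{l_n}$ with $l_i \geq 0$ are interderivable on the left of $\urj$.

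Normalising the conclusion. Similarly, $\p$ is equivalent modulo $T$ to $\bigwedge_{j\in J_\p}\neg H_j$. In one direction, from $\Delta\urj\p$ I apply (RWE), using $T,\p\models \neg H_j$ for each $j\in J_\p$, to obtain $\Delta\urj\neg H_j$ for every such $j$. In the other direction, from the family $\{\Delta\urj\neg H_j\}_{j\in J_\p}$ I apply (AND) to get $\Delta\urj\bigwedge_{j\in J_\p}\neg H_j$ and then (RWE) with $T,\bigwedge_{j\in J_\p}\neg H_j\models\p$ to recover $\Delta\urj\p$. The degenerate case $J_\p=\emptyset$ means $T\models\p$, in which case $\Delta\urj\p$ holds trivially and the associated set of simple consequences may be taken to be empty.

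Combining the two normalisations yields
\[
\Delta\urj\p \;\Longleftrightarrow\; (\neg H_1)^{l_1},\dots,(\neg H_n)^{l_n}\urj\neg H_j \text{ for each } j\in J_\p,
\]
which is exactly the content of (2). I expect the main delicate point to be the bookkeeping for the side conditions of (AND)$_l$ and (AND)$_l^{con}$ during the iterated splitting/merging of $\bigwedge_{i\in J_\delta}\neg H_i$, together with the handling of the edge cases $J_\delta=\emptyset$ (tautological premises) and $J_\p=\emptyset$ (tautological conclusion); both are dispatched by the fact that $\top$'s may be freely inserted or removed on the left via (CUT)/(CMO) and by a vacuous reading of an empty family of consequences.
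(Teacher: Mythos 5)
Your proposal is correct and follows essentially the same route as the paper: rewrite each premise and the conclusion, modulo $T$, as a conjunction of negated hypotheses, split and merge these conjunctions via $(\text{AND})_l$ and $(\text{AND})_l^{con}$ (whose side conditions hold by mutual exclusivity), and handle the conclusion with (RWE) and (AND). The only cosmetic difference is that the paper discards premises equivalent to $T$ via (\rmon) (using $\Delta_\land\setminus\{T\}\not\urj\neg T$) rather than via (CUT)/(CMO), but both devices work.
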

\begin{proof}
(1) $\Rightarrow$ (2). Assume $\Delta \urj \p$. We proceed as follows.
First, note that any formula $\f$ in $\fm_\H$ is logically equivalent, modulo $T$, 
to the disjunction of the hypotheses entailing it, i.e. to the formula
\[\f_\lor:= \bigvee_{ H \models \f}  H.\]
In turn, since {under our assumptions} $  H\not \models \f$ entails $  H\models\neg \f$, the formula $\f_\lor$ is equivalent to :

\[ \f_\land := \bigwedge_{H \models \neg\f} \neg H.\] 
In the limiting case where  $\{H\in \H\mid H \models \f\} = \H $,
 and consequently  $\{H\in \H\mid H \models \neg\f\} = \emptyset$, we let $\f_\lor \equiv \f_\land \equiv T$. We then let $\Delta_\land$ be the multiset obtained by replacing each $\d\in\Delta$ by $\d_\land$. Since all the formulas in $\Delta_\land$ and $\p$ { are logically equivalent (modulo $T$) to  $\Delta$ and $\p$, %\footnote{\hykel{ non capisco quello che segue del paragrafo fino a RWE}}
 we will then have that $\Delta\urj\p$ if and only if 
\[\Delta_\land \urj \p_\land. \]}  From this latter, we can derive the original $\Delta \urj \p$ by repeated backwards applications  of (LLE) to recover $\Delta$, and (RWE) to recover $\p$
%\footnote{Qui però ci serve una versione più forte delle regole  LLE e RWE, che usi la teoria $T_\Delta$}.

Now, if one of the formulas in $\Delta$, say  $\d_\land$, is equivalent to $ T$, it does not {temporarily}
 reject any noncontradictory formula, hence \[\Delta_\land \urj \p_\land \text{ if and only if } \Delta_\land\setminus\{T\} \urj \p_\land.\] 
From it we can derive $\Delta_\land \urj \p_\land$ by just applying $(\rmon)$, since \linebreak {${{\Delta_\land\setminus\{T\} \not \urj \neg T}}$}, i.e.
\[ \infer[(\rmon)]{\Delta_\land \urj \p_\land }{\Delta_\land\setminus\{T\} \urj \p_\land  & \Delta_\land\setminus\{T\} \not \urj \neg T  }\]
We may thus remove from $\Delta$ all the formulas that are logically equivalent to $T$.

Our next step is applying backwards the rules $\text{(AND)}_l$ and $\text{(AND)}$. Note that we can equivalently take the converse $\text{(AND)}_l^{con}$, since the conjuncts are of the form $\neg H_1 \land \neg H_2 \land \dots$, and their negations $H_1,H_2,\dots$ are all mutually exclusive modulo T, hence they satisfy the condition for applying $\text{(AND)}_l^{con}$. We thus reduce \mbox{$\Delta_\land\urj \p_\land$} to the set of consequences of the form:
\[\neg H_1^{l_1}, \dots, \neg H_n^{l_n} \urj \neg H_j\] 
as required.

(2) $\Rightarrow$ (1). The result follows by applying (forwards, this time) all the rules used in the previous step.
\end{proof}

The normal form  depends on the set of hypotheses $\H$, as the next example illustrates.
\begin{example}\label{ex:NF} Consider first  the normal form of $H_1 \lor H_2, H_1 \lor H_3{\urj} H_1$ assuming $\H = \{H_1,H_2, H_3\}$:

\begin{itemize}
 \item[] $\d_1 \coloneqq  H_1 \lor H_2$
 \item[] $\d_2 \coloneqq  H_1 \lor H_3$
 \item[] $\psi \coloneqq  H_1$
 \end{itemize}
 
 \begin{itemize}
 \item[] ${\d_1}_{\land} \coloneqq \neg H_3 $
 \item[] ${\d_2}_{\land} \coloneqq   \neg H_2$
 \item[] $\psi_{\land} \coloneqq  \neg H_2 \land \neg H_3$
 \end{itemize}
 
Thus, $\Delta_{\land}=\{\neg H_3, \neg H_2\}$ and $\maxh_{\Delta_{\land}}=\{H_1\}$.

Note that if we let  $\H = \{H_1,H_2, H_3, H_4\}$ we obtain the following:
 
 \begin{itemize}
 \item[] ${\delta'_1}_{\land} \coloneqq \neg H_3 \land \neg H_4 $
 \item[] ${\delta'_2}_{\land} \coloneqq   \neg H_2 \land \neg H_4$
 \item[] $\psi'_{\land} \coloneqq  \neg H_2 \land \neg H_3 \land \neg H_4$
 \end{itemize}

Hence $\Delta_{\land}'=\{\neg H_3, \neg H_2, \neg H_4\}$ and $\maxh_{\Delta'_{\land}}=\{H_1\}$.

It is easy to see that, in case $\H = \{H_1,H_2, H_3, H_4\}$ we have $\Delta'_{\land}\not\urj H_1$. 
\end{example}

Now to our main result.

\begin{restatable}{theorem}{Firsttheorem}\label{thm:iminimal}
Suppose $\Delta\urj\p$ and $r_\Delta(H)$ is finite for each $H\in\H$. Then there is a derivation of it using the rules in Table \ref{tab:ruleswiththeory} and Table \ref{tab:urj}.
\end{restatable}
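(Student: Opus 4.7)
My approach is to combine Lemma \ref{normalform} with a strong induction on multiset size. By that lemma, any $\urj$-consequence $\Delta \urj \psi$ is inter-derivable, using the structural rules (LLE), (RWE), $(\text{AND})_l$, $(\text{AND})_l^{con}$, and (RMO), with a finite collection of normal-form consequences of the shape $\neg H_1^{l_1}, \dots, \neg H_n^{l_n} \urj \neg H_j$. Under the theorem's assumption that $r_\Delta(H)$ is finite for every $H \in \H$, each such normal-form consequence is valid exactly when $l_j > l := \min_k l_k$ (and every $l_k \leq m$, so the semantics is the plain additive sum of one-step rejections). It therefore suffices to derive each valid normal-form consequence using the rules in Table \ref{tab:urj}.

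I would prove derivability of $\neg H_1^{l_1}, \dots, \neg H_n^{l_n} \urj \neg H_j$ (with $l_j > l$) by strong induction on $N := \sum_k l_k$. The base case $N = 1$ forces $\neg H_j \urj \neg H_j$, which is (REF). For the inductive step I split on whether $l = 0$ or $l \geq 1$. If $l = 0$, there is some $k_0 \neq j$ with $l_{k_0} = 0$, i.e., $\neg H_{k_0}$ is absent from the multiset; starting from (REF) I would first obtain $l_j$ copies of $\neg H_j$ on the left by $l_j - 1$ applications of (CMO), and then, for each $i \neq j$ with $l_i \geq 1$, iteratively append the $l_i$ copies of $\neg H_i$ via (RMO). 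The side condition $\Delta' \not\urj H_i$ of (RMO) holds throughout because $H_{k_0}$ remains in $\maxh_{\Delta'}$ with rejection degree $0$ and $T, H_{k_0} \not\models H_i$. If instead $l \geq 1$, I would pick any $k \neq j$ with $l_k = l$ (which exists, since $l_j > l$ excludes $j$ from the argmin) and apply (\umon) with $\phi = \neg H_k$. The two kinds of premise multisets---one formula shorter, or two formulas shorter---have strictly smaller total multiplicity and so are derivable by the induction hypothesis, provided they are valid. A short case analysis on the index $i$ of the second formula removed shows that the perturbed minimum drops by at most two (this happens exactly when $i = k$ and $l_k \geq 2$, yielding new minimum $l - 2$), and the hypothesis $l_j \geq l + 1$ keeps $l_j$ strictly above every such perturbed minimum.

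The main obstacle I anticipate is the validity check for the $(\umon)$ premises in the $l \geq 1$ case, which forces the specific choice $k \neq j$ with $l_k = l$. Picking $k = j$ would be fatal: removing two copies of $\neg H_j$ from $\Delta$ would reduce the multiplicity of $j$ to $l_j - 2$, which in the tight subcase $l_j = l + 1$ falls strictly below the perturbed minimum and invalidates the corresponding premise. The clean split between the $l = 0$ case---where the argmin has a representative outside the multiset and (RMO) suffices---and the $l \geq 1$ case---where every hypothesis is represented in $\Delta$ and $(\umon)$ is the only rule powerful enough to make progress---is the organising principle of the induction.
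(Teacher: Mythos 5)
Your proposal is correct and follows the same skeleton as the paper's proof: reduce to normal forms $\neg H_1^{l_1},\dots,\neg H_n^{l_n}\urj\neg H_j$ via Lemma \ref{normalform}, then induct on the total multiplicity, splitting on whether the minimum $l=\min_k l_k$ is zero (the paper's case (1) versus case (2)). The one substantive difference is the inductive step for $l\geq 1$: the paper distinguishes four further subcases according to whether $H_j$ is (uniquely) maximally rejected, using (\rmon) in three of them and (\umon) in only one, whereas you apply (\umon) uniformly after deleting one occurrence of a least-rejected $\neg H_k$ with $k\neq j$; your validity check for the (\umon) premises (the perturbed minimum drops by at most $2$, while $l_j\geq l+1$ and at worst $l_j-1\geq l$ stay strictly above it) is sound, so this gives a genuinely more uniform argument, and your observation that $k=j$ would fail in the tight case $l_j=l+1$ correctly identifies the only delicate point. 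Likewise your forward construction in the $l=0$ case via (CMO) and (\rmon) from (REF) replaces the paper's backward (\rmon) reduction to the induction hypothesis; both work, since a hypothesis $H_{k_0}$ with multiplicity $0$ witnesses the (\rmon) side condition throughout.
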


The proof is in the Appendix. Here we sketch the idea. 
Lemma \ref{normalform} guarantees the existence of a set of
\[\neg H_1^{l_1}, \dots, \neg H_n^{l_n} \urj \neg H_j,\] 
%one for every $H$ s.t. $H \models \psi$, 
where all the multiplicity indices $l_1, \dots, l_n$  are greater or equal than $0$ and $j\in \{1,\dots,n\}$.
Pick any such consequences and denote its left-hand side by $\Delta'$.
The proof then proceeds reasoning by induction on the number of formulas occurring in $\Delta'$. We need to show how to derive $\Delta' \urj \neg H_j$ by a rule in either Table \ref{tab:ruleswiththeory} and  Table \ref{tab:urj}, using valid  premises with a number of formulas smaller than $\Delta'$.
Example \ref{ex:thm} below illustrates some of the relevant subcases, involving  (\umon) and (\rmon).

%\remove{Recall that the above proofs only focused on consequences of the form $\Delta \urj \p$ with $r_\Delta(H)$ finite for each $H\in \H$. This means that we are focusing on the $\Delta$ corresponding to those part of the USI game which are still fully non-monotonic, i.e. when none of the hypothesis have been rejected yet. There is in principle no major problem in extending the proof to the case where for \emph{some} of the hypotheses $H$, we have $r_\Delta(H) = \infty$, but this would burden an already technical proof with further case distinctions. On the other hand the case where $r_\Delta(H) = \infty$ for \emph{all} of the hypothesis, i.e. when $r_\Delta(H_{s^*}) \geq m+1$ is not covered by our proof, but has little interest, since the  multiset $\Delta$ would be an illegal one in the corresponding USI game.}

\begin{example}\label{ex:thm}
We illustrate by way of examples how to apply the rules (\umon) and (\rmon) in the corresponding sub-case of the proof of Theorem \ref{thm:iminimal}. In each case below, we set $\H=\{H_1, H_2, H_3, H_4\}$. 
\begin{itemize}
\item[(1)] Suppose that $\Delta'=\{\neg H_4^3\}$ where 3 is the multiplicity index relative to $\neg H_4$. Thus, 
$\maxh_{\Delta'}=\H\setminus\{H_4\}=\{H_1, H_2, H_3\}$, and$\neg H_4^3 \urj \neg H_4$ holds, while $H_i \models \neg H_4$, for every $i=1, 2, 3$.  
If we remove $\neg H_4$ from $\Delta'$, then 
$\Delta'\setminus \{\neg H_4\}=\{\neg H_4^{2}\}$ and 
$\maxh_{\Delta'}=\maxh_{\Delta'\setminus \{\neg H_4\}}=\{H_1, H_2, H_3\}$. Therefore, $\Delta'\setminus \{\neg H_4\} \urj \neg H_4$. In addition, for every $i=1, 2, 3$ we have $H_i \not \models H_4$ and $\Delta'\setminus \{\neg H_4\} \not\urj H_4$.

 In conclusion $\neg H_4^3 \urj \neg H_4$ can be obtained by applying (\rmon) to $\neg H_4^2 \urj \neg H_4$ as first premise and $\neg H_4^2 {\not\urj} H_4$ as second premise, i.e. 

\[ \infer[{(\rmon)}]{\neg H_4^3 \urj \neg H_4} {\neg H_4^2 \urj \neg H_4  & \neg H_4^2 \not\urj H_4  }.\]

Suppose that $\Delta'=\{\neg H_1^2, \neg H_4^3\}$. Thus, 
$\maxh_{\Delta'}=\{H_2, H_3\}$. Since $H_2\models \neg H_1$ and $H_3\models \neg H_1$, then
$\neg H_1^2, \neg H_4^3 \urj \neg H_1$ holds. If we remove $\neg H_4$ from $\Delta'$, then 
$\Delta'\setminus \{\neg H_4\}=\{\neg H_1^2, \neg H_4^2\}$ and 
$\maxh_{\Delta'}=\maxh_{\Delta'\setminus \{\neg H_4\}}=\{H_2, H_3\}$. Therefore, $\Delta'\setminus \{\neg H_4\} \urj \neg H_4$. In addition, for every $i=2, 3$ $H_i \not\models H_1$ and $\Delta'\setminus \{\neg H_4\} \not\urj H_1$.

In conclusion $\neg H_1^2, \neg H_4^3 \urj \neg H_1$ can be obtained by applying (\rmon) to $\neg H_1^2, \neg H_4^2 \urj \neg H_1$ as first premise and\\ $\neg H_1^2, \neg H_4^2\not\urj H_4$ as second premise, i.e. 

\[ \infer[{(\rmon)}]{\neg H_1^2, \neg H_4^3 \urj \neg H_1} {\neg H_1^2, \neg H_4^2 \urj \neg H_1  & \neg H_1^2, \neg H_4^2 \not\urj H_4 }.\]

\item[(2a)] Suppose that $\Delta'=\{\neg H_1^2, \neg H_2^1, \neg H_3^3, \neg H_4^3\}$.
Thus, $\maxh_{\Delta'}=\{H_2\}$. Since $H_2\models \neg H_4$, 
then $\neg H_1^2, \neg H_2^1, \neg H_3^3, \neg H_4^3 \urj \neg H_4$ holds.
If we remove $\neg H_3$ from $\Delta'$, then 
$\Delta'\setminus \{\neg H_3\}=\{\neg H_1^2, \neg H_2^1, \neg H_3^2, \neg H_4^3\}$ and 
$\maxh_{\Delta'}=\maxh_{\Delta'\setminus \{\neg H_3\}}=\{H_2\}$. 
Therefore, $\Delta'\setminus \{\neg H_3\} \urj \neg H_4$. In addition, $H_2\not \models H_3$ and $\Delta'\setminus \{\neg H_3\} {\not\urj} H_3$. 

In conclusion $\neg H_1^2, \neg H_2^1, \neg H_3^3, \neg H_4^3 \urj \neg H_4$ can be obtained by applying (\rmon) to 
$\neg H_1^2, \neg H_2^1, \neg H_3^2, \neg H_4^3 \urj \neg H_4$ as first premise and\\ $\neg H_1^2, \neg H_2^1, \neg H_3^2, \neg H_4^3 {\not\urj}  H_3$ as second premise, i.e. 

\[ \infer[{(\rmon)}]{\neg H_1^2, \neg H_2^1, \neg H_3^3, \neg H_4^3 \urj \neg H_4} {\neg H_1^2, \neg H_2^1, \neg H_3^2, \neg H_4^3 \urj \neg H_4 & \neg H_1^2, \neg H_2^1, \neg H_3^2, \neg H_4^3 \not\urj H_3}.\]

\item[(2a')] Suppose that $\Delta'=\{\neg H_1^2, \neg H_2^1, \neg H_3^2, \neg H_4^3\}$.
Thus, $\maxh_{\Delta'}=\{H_2\}$. Since $H_2\models \neg H_4$, 
then $\neg H_1^2, \neg H_2^1, \neg H_3^2, \neg H_4^3 \urj \neg H_4$ holds.

If we remove $\neg H_2$ from $\Delta'$, then 
$\Delta'\setminus \{\neg H_2\}=\{\neg H_1^2, \neg H_3^2, \neg H_4^3\}$ and 
$\maxh_{\Delta'}=\maxh_{\Delta'\setminus \{\neg H_3\}}=\{H_2\}$. 
Therefore, $\Delta'\setminus \{\neg H_2\} \urj \neg H_4$. 

If we remove $\neg H_2$ and $\neg H_1$ from $\Delta'$, then 
$\Delta'\setminus \{\neg H_2, \neg H_1\}=\{\neg H_1, \neg H_3^2, \neg H_4^3\}$ and 
$\maxh_{\Delta'}=\maxh_{\Delta'\setminus \{\neg H_2, H_1\}}=\{H_2\}$. 
Therefore, $\Delta'\setminus \{\neg H_2, H_1\} \urj \neg H_4$. 

If we remove $\neg H_2^2$ from $\Delta'$, then 
$\Delta'\setminus \{\neg H_2^2\}=\{\neg H_1^2, \neg H_3^2, \neg H_4^3\}$ and 
$\maxh_{\Delta'}=\maxh_{\Delta'\setminus \{\neg H_2^2\}}=\{H_2\}$. 
Therefore, $\Delta'\setminus \{\neg H_2^2\} \urj \neg H_4$. 

If we remove $\neg H_2$ and $\neg H_3$ from $\Delta'$, then 
$\Delta'\setminus \{\neg H_2, \neg H_3\}=\{\neg H_1^2, \neg H_3, \neg H_4^3\}$ and 
$\maxh_{\Delta'}=\maxh_{\Delta'\setminus \{\neg H_2, H_3\}}=\{H_2\}$. 
Therefore, $\Delta'\setminus \{\neg H_2, H_3\} \urj \neg H_4$.

If we remove $\neg H_2$ and $\neg H_4$ from $\Delta'$, then 
$\Delta'\setminus \{\neg H_2, \neg H_4\}=\{\neg H_1^2, \neg H_3^2, \neg H_4^2\}$ and 
$\maxh_{\Delta'}=\maxh_{\Delta'\setminus \{\neg H_2, H_4\}}=\{H_2\}$. 
Therefore, $\Delta'\setminus \{\neg H_2, H_4\} \urj \neg H_4$. 

In conclusion $\neg H_1^2, \neg H_2^1, \neg H_3^2, \neg H_4^3 \urj \neg H_4$ can be obtained by applying (\umon) to 
$\neg H_1^2, \neg H_3^2, \neg H_4^3 \urj \neg H_4$ as first premise and $\{\{\neg H_1^2, \neg H_2^1, \neg H_3^2, \neg H_4^3\}\setminus \{\neg H_2, \neg H\} \urj \neg H_4\}_{\neg H \in \Delta'}$, i.e.

\[ \infer[{(\umon)}]{\neg H_1^2, \neg H_2^1, \neg H_3^2, \neg H_4^3 \urj \neg H_4} {\neg H_1^2, \neg H_3^2, \neg H_4^3 \urj \neg H_4 & \{\{\neg H_1^2, \neg H_3^2, \neg H_4^3\}\setminus \{ \neg H\} \urj \neg H_4\}_{\neg H \in \Delta'}}.\]

\item[(2b)] Suppose that $\Delta'=\{\neg H_1^2, \neg H_2^1, \neg H_3^2, \neg H_4^3\}$. Thus, 
$\maxh_{\Delta'}=\{H_2\}$. Since $H_2\models \neg H_1$, then
$\neg H_1^2, \neg H_2^1, \neg H_3^2, \neg H_4^3 \urj \neg H_1$ holds. If we remove $\neg H_4$ from $\Delta'$, then 
$\Delta'\setminus \{\neg H_4\}=\{\neg H_1^2, \neg H_2^1, \neg H_3^2, \neg H_4^2\}$ and 
$\maxh_{\Delta'}=\maxh_{\Delta'\setminus \{\neg H_4\}}=\{H_2\}$. Therefore, $\Delta'\setminus \{\neg H_4\} \urj \neg H_4$. In addition, $H_2 \not\models H_1$ and $\Delta'\setminus \{\neg H_4\} \not\urj H_1$. 

In conclusion $\neg H_1^2, \neg H_2^1, \neg H_3^2, \neg H_4^3 \urj \neg H_1$ can be obtained by applying (\rmon) to $\neg H_1^2, \neg H_2^1, \neg H_3^2, \neg H_4^2 \urj \neg H_1$ as first premise and \\$\neg H_1^2, \neg H_2^1, \neg H_3^2, \neg H_4^2\not\urj H_4$ as second premise, i.e. 

\[ \infer[{(\rmon)}]{\neg H_1^2, \neg H_2^1, \neg H_3^2, \neg H_4^3 \urj \neg H_1} {\neg H_1^2, \neg H_2^1, \neg H_3^2, \neg H_4^2 \urj \neg H_1& \neg H_1^2, \neg H_2^1, \neg H_3^2, \neg H_4^2\not\urj H_4}.\]

\end{itemize}
\end{example}

\begin{example} We further illustrate Theorem \ref{thm:iminimal} by showing how to find a derivation for the normal forms identified in Example \ref{ex:NF}. To this end,  let $\H = \{H_1,H_2, H_3\}$ and $\neg H_2, \neg H_3 \urj \neg H_2 \land \neg H_3$. Thus,
  \begin{small}
\[
\infer[\text{(AND)}]{\neg H_2, \neg H_3 \urj \neg H_2 \land \neg H_3} {\infer[(\rmon)]{\neg H_2, \neg H_3 \urj \neg H_2}{\neg H_2 \urj \neg H_2 & \neg H_2 \not \urj H_3} &\infer[(\rmon)]{\neg H_2, \neg H_3 \urj \neg H_3}{\neg H_3 \urj \neg H_3 & \neg H_3 \not \urj H_2}},
\]
\end{small}
is the required derivation.
\end{example}

\section{Conclusion and further work}\label{sec:conclusion}

We have introduced a family of consequence relations with the following intended semantics: \emph{$\phi$ follows from $\Delta$ if $\phi$ is a classical consequence of every hypothesis which is least rejected in $\Delta$.
} The formalisation of this leads to our blueprint consequence relation of Definition \ref{def:Rj}. This in turn produces three distinct, but related consequence relations: $\prj$, $\arj$ and $\urj$.  The first, $\prj$, is based on a  rejection function which pins down maximal likelihood hypotheses. The rejection function underpinning $\arj$ is based on a logical rendering of the  p-value.  Finally, the rejection function defining $\urj$ arises by counting of how many times a hypothesis is {temporarily} rejected in a USI game.

We showed that $\prj$, $\arj$, and $\urj$  are variations on the theme of {rational} consequence relations, as is their precursor: the blueprint consequence relation $\rj$.  Table \ref{tab:summary} provides a comparison (we omit from the table those rules which are satisfied by all of them, i.e. REF, LLE, RWE, AND,  CUT, {RMO},CMO).  %CMO,REF
\begin{table}[h!]
  \centering
 
  \begin{tabular}{l|cccccc}
     &   $\text{AND}_l$ & $\text{AND}_l^{con}$ & $\text{OR}$ &  $\umon$ \\   
\hline
%$\rj$   & & &  & &   \\
$\prj$   & \checkmark&\checkmark& &    &  \\   
$\arj$ & &  &  & &\\
$\urj$  & \checkmark & \checkmark & \checkmark & \checkmark   \\   
  \end{tabular}

%  \begin{tabular}{l|cccccc}
%     &   $\text{AND}_l$ & $\text{OR}$ & $\text{NEG}_n$ & $\rmon$ & $\umon$ \\   
%\hline
%$\rj$   & & &  & & &  \\
%$\prj$   & \checkmark&&  & & &  \\   
%$\arj$ & \checkmark& & \checkmark& \checkmark& &\\
%$\urj$    & \checkmark& \checkmark & &\checkmark  &\checkmark \\   
%  \end{tabular}
  \caption{A comparison of relevant properties satisfied by rejection-based consequence relations.}
  \label{tab:summary}
\end{table}

The main result of the paper, Theorem \ref{thm:iminimal}, identifies the rules of inference with respect to which $\urj$ is complete.

While laying down the intended semantics of our consequence relations, we detailed why we considered the rejection degrees desirable for maximum likelihood, null hypothesis significance tests, and strong inference, respectively. To us this lends support to the  blueprint consequence relation being a promising starting point for a logical investigation into the general properties of data-driven inference, independently of one's own preferred view on the nature of probability. 

Given the enormous variety and complexity of scientific inference, what we have put forward here is just a preliminary set of results. We hope that this is enough to encourage more logicians to take up the challenge. Not only will this help revamping the decidedly outdated picture scientists have of logic, but it may also contribute to bringing some of the most heated debates on statistical significance and, more generally, on the methodology of data-driven inference on more logical and less ideological grounds.

\subsection{Related and future work}\label{sec:related}
In a series of works culminating in \cite{Kyburg2006}, Henry Kyburg and Choh Man Teng investigate the problem of putting statistical inference on a non-monotonic logical footing. They do so by taking as a starting point  the defeasible nature of rejecting $H_0$ in NHST.
To the best of our knowledge they are among the very first ones {to do} this from a formal-logical point of view.  It is therefore appropriate to describe briefly how the present work departs from the pioneering contribution of Kyburg and  Teng's.

Conceptually,   \cite{Kyburg2006}  focusses on the fact that in NHST and, more generally in statistical inference, one usually justifies conclusions (about the population) based on the representativeness of the sample. So its aim, as far as the foundations of statistics is concerned, is to contribute {to} the long-standing \textit{reference class problem}. Our motivating questions is broader, as we pointed out in the introductory section. Specifically, whilst we acknowledge the paramount importance of statistical data in scientific inference,  we do not assume that it is the only kind of data relevant to pin down the logical properties of scientific inference. This is why we encompass the more general problem of \textit{strong inference}, which we capture through USI games. Our second, and more formal, point of departure from Kyburg and Teng is the specific non-monotonic framework.  Given their research question, Default Logic \cite{Reiter1981}  is a most natural setup, which they then expand to investigate the role of probabilistic evidence in providing a semantics for the justification of non-normal defaults.  More precisely, the authors insist that the justification for the application of a default rule encodes the lack of evidence to the effect that the statistical sample used in the inference is atypical or biassed.  In line with the  broader question asked, our results are formulated in terms of non-monotonic consequence relations of the preferential kind. As is well known \cite{Makinson2005} the two frameworks are related, but distinct.  Similarly, and more importantly, we do not commit to any specific view on the meaning of probability, which is interpreted evidentially by Kyburg and Teng. Since their uncertainty representation belongs to the wider class of imprecise probability, we are at  present  unable to carry out a direct comparison with their results. This, however, we set out to do in future work.

One problem which arises in the context of rejection-based scientific inference, is handling conflicting evidence. Kyburg and Teng tackle it at the level of the measure of uncertainty, which  for them is evidential probability as just recalled. However, it is of technical and conceptual interest to focus on the fact that rejection-based inference can {be} seen as a form of paraconsistent inference. Recall that according to Fisher \cite{Fisher1935a}:
\begin{quote}
[t]he interest of statistical tests for scientific workers depends entirely from their use in rejecting hypotheses which are thereby judged to be incompatible with the observations.
\end{quote}
As captured by USI games, if the observations are classically inconsistent with a set of hypotheses, at least one of them must be rejected. But on more fine-graned analysis, this kind of ``incompatibility'' will come in degrees. Whilst the consequence relation $\arj$ tackles this by mimicking the calculated p-value, it is interesting to address the ``degree of incompatibility'' interpretation of p-values  through a paraconsistent consequence relation, for which a vast landscape of logics are available \cite{Carnielli2002,Carnielli2016}. As a first line of work in this direction, it appears promising to distinguish \emph{evidence in favour} from \emph{evidence against} a given hypothesis. In this way a formal link with First-Degree Entailment \cite{Omori2017} becomes available. This would then lead to revising our blueprint consequence relation to encompass logics of formal inconsistency \cite{Carnielli2007,Carnielli2017}. In  particular, the probability functions defined over such systems  \cite{Carnielli2021,Klein2021} are expected to give rise to rejection degrees capable of representing useful inference in light of conflicting evidence, which is very much the norm in scientific practice. The statistical literature on this is already quite promising
  % This could also be done by building formal bridges with the theory of \emph{agnostic hypothesis testing}
\cite{borges2007rules,stern2004paraconsistent,Izbicki2015,Esteves2016}, see also \cite{Pereira2022} for an overview.

This brings us, in conclusion, to a logical take on statistical inference which has been put forward in the framework of dynamic doxastic logics by Baltag, Rafiee Rad and Smets (BRS) in \cite{Baltag2021}. In it, the authors model an agent performing statistical inference, as combining ``strong'', epistemic information about the probability of an event, with ``soft'', doxastic, information. The latter is construed as a plausibility function, which determines an ordering over the epistemically permissible probability distributions, reflecting the subjective inclination of the agent and the non-definitive evidence she is confronted with. The agent is then taken to believe only in the \emph{maximal} distributions, according to her plausibilistic order. Whilst the research question underlying the BRS framework is quite distinct from ours, a foundational commonality with the present work stands out: inference arises against some underlying epistemic ordering and background information. This is not surprising, since some epistemic ordering is always at the root of a significance test.

In conclusion, let us restate the key message of our results: consequence relations based on degrees of rejection are promising in the investigation of the validity of data-driven inference. A clear difficulty in making the next step, and ascertain whether they will also deliver in real-world cases of scientific inference, is to do with the fact that these  are only partially formal. By this we mean that unlike mathematical proofs, arguments in data-driven science depend, to some hard-to-pin-down extent to the actual content of the specific reasoning at hand. This is why we are working on a bottom-up approach where specific instances of data-driven inference which a relevant community takes to be valid is represented within our system of consequence relations and hopefully abstracted to show its general, yet context-dependent, validity. Key tools in doing this are the further specialisation of the rejection functions, and in particular alternatives to addition in the aggregation of the rejection offered by single pieces of data, and a more fine grained interpretation of the meaning of \textit{lies} in the USI game. We hope to report encouraging results in this direction in future work.

\section*{Acknowledgements}
We are very grateful to the organisers and audiences of the LIRa seminar in Amsterdam, the IIIA-CSIC seminar in Barcelona, the triennial international conference of the Italian Society for Logic and the Philosophy of Science held at the University of Urbino, {the MOSAIC Workshop held in Vienna}. 

Paolo Baldi acknowledges funding from the research project ``Green-Aware MEchanismS'' (GAMES), part of the PNRR MIUR project FAIR - Future AI Research (PE00000013), Spoke 9 - Green-Aware AI

%Esther Anna Corsi acknowledges funding from Fondazione Cariplo under the ``Young Researchers'' call -- Logics for Scientific Inferences (LOGSI) 2023-0978. 

Hykel Hosni acknowledges funding from the Italian Ministry for University and Research under the scheme FIS1 -- Reasoning with Data (ReDa) G53C23000510001. 

{Corsi and Hosni acknowledge funding by the Department of Philosophy ``Piero Martinetti'' of the University of Milan under the Project ``Departments of Excellence 2023-2027'' awarded by the Italian Ministry of Education, University and Research (MIUR), and by the MOSAIC project (EU H2020-MSCA-RISE-2020 Project 101007627).}
%the Italian Ministry for University and Research under 
\section*{Appendix}

\Firstprop*
  \begin{proof}
The proof proceeds by considering each rule in turn.

For (RWE), assume that $\Delta\rj\delta$ and $\delta \models \p$.  Since $\Delta\rj\delta$  for any $H\in  \maxh_\Delta$ we have that  
$T, H\models \delta$. From $\delta\models \p$, we then have, for any $H\in \maxh_\Delta$, that $T, H\models \p$. %Hence the rule is valid.

(LLE) holds, since if we take two formulas $\c$ and $\d$ which are logically equivalent, by (2) in Definition \ref{def:degrej} we have %$\maxh_\f = \maxh_\p$, and 
$r_\c(H) = r_\d(H)$ for any $H \in \H$. Hence $\maxh_{\Delta,\c}= \maxh_{\Delta,\d}$ and $\Delta,\c\rj \f$ entails $\Delta,\d\rj \f$ .

%For (REF), we have that $\f$ can only be both on the left and  right-hand side of $\rj$ if $\f\in\fm_\H$. Then for every $H\in \H$, either $H \models \f$ or $H\models \neg \f$. In the former case, we have $r_\f(H) = 0 $, while in the latter  $r_\f(H) = 1$. Hence, we have that $H\in \maxh_\f$ iff $H\models \f$. But this shows that $\f\rj\f$.

We now show that (AND) holds. Indeed, assume that ${\Delta\rj \f}$ and  ${\Delta\rj \p}$. For any $H\in \maxh_{\Delta}$, we then have that $T, H\models\f$ and $T, H\models\p$. Hence $T, H\models \f\land \p$ for any $H\in \maxh_{\Delta}$.\end{proof}

\medskip

\Secondprop*
\begin{proof}
	
	For (AMON), let  $\H = \{H_1,H_2\}$ %and suppose  $t$ is the identity function. 
	and take $\c\rj H_1$ to be the premise of (AMON), assuming that   $r_\c(H_1) \leq r_\c(H_2)$.  Now, we will have $r_{\c\land \d} (H_1) \geq r_{\c} (H_1)$ and $r_{\c\land \d} (H_2) \geq r_{\c} (H_2)$ by the properties of rejection functions. 
	%This means that %r_{\f\land \xi} (H_2) - r_{\f\land \xi} (H_1)\geq 
	However, we may still have  $r_{\c\land\d}(H_1) > r_{\c\land\d}(H_2)$, that is $\maxh_{\c\land\d} = \{H_2\}$, hence $\c\land\d\not\rj H_1$.
	
	%\begin{equation}\label{eq:lemma-1}
	%r_(\neg \f \mid H_2) + P( \neg \xi \mid H_2)< P(\neg \f \mid H_1) + P( \neg \xi \mid H_1) .
	%\end{equation}
	%Since $P(\neg(\f\land \xi)) = P(\neg \f\lor \neg\xi)= P(\neg \f) + P(\neg \xi)$,  \eqref{eq:lemma-1} is equivalent to $P(\neg(\f\land\xi)\mid H_1) > P(\neg (\f\land\xi) \mid H_2)$. 
	
	%Hence we have constructed a situation in which  $\f\land\xi \not\rj H_1$ and $\f\land\xi\rj H_2$, as required.
	
	As for (MMON), assume again $\H = \{H_1,H_2\}$
	and $\c\rj H_1$ with $r_\c(H_1) = 0$ and $r_\c(H_2) =1$. Let $\d$ be such that $r_\d(H_1) = 1$ and $r_\d(H_1) = 0$. We will have $r_{\c,\d}(H_1) = r_{\c,\d}(H_2) = 1$. Hence $\maxh_{\c,\d} = \{H_1,H_2\}$, and clearly $\c,\d \not\rj H_1$.\end{proof}
	
\medskip

\Thirdprop*
\begin{proof}
For (REF) we proceed as follows. First, note that from the assumptions on $T$ it follows that, for every $H\in \H$, either $T, H \models \delta$ or $T, H\models \neg \delta$. In the former case, we have $r_\delta(H) = 0 $ and $H\in\maxh_\delta$, while in the latter $r_\delta(H)>0 $, hence $H\not \in \maxh_\delta$.  In case that, at least for  some $H$, we have $T,H\models \delta$, we then immediately obtain, for each $H\in\maxh_\delta$ that  $T, H\models\delta$ holds.

Assume now instead that, for all $H\in \H$, we have $H,T\models \neg \delta$. Since the hypotheses are exhaustive and mutually exclusive, this entails that $T \models \neg \delta$, hence $r_\delta(H) =\infty$ for each $H\in \H$ and  by Lemma \ref{prop:rj-explosive}, we have that $\delta\rj\delta$.

% $\maxh=\emptyset$ and $\f\rj\f$ since its defining condition ``if $H\in\maxh_\f$ then $T,H\models \f$"  is vacuously true.

%Hence, by the assumption $T_\f\models \bot$ and for each $H\in\maxh$ we have $T_\f,H \models \bot$, which in turn entails 
%$T_\f, H \models\f$.
%Hence, by the assumptions, $r_\f(H) = \infty$ for each $H\in \H$, and $T_\f\models \neg H$. But this would entail that, for each $H\in\maxh_\f$, we have $H,T_\f\models\neg H$. Hence $H, T_\f $ is  contradictory and in particular $H, T_\f\models \f$ . 

%In case all of the $H$ are such that $T_\Delta, H\models \neg \f$, 

% Hence, we have that $H\in \maxh_\f$ iff $H\models \f$. But this shows that $\f\rj\f$.

For (CUT),  let us assume that (a) $\Delta,\delta\rj\p$ and (b) $\Delta\rj\delta$. %Note that,  since $\f$ occurs both on the right and the left hand side of $\rj$, we need to have $\f\in\fm_\H$.
If $\maxh_{\Delta} = \emptyset$ then by Lemma \ref{prop:rj-explosive}, $\Delta\rj \p$ holds. So  assume $\maxh_{\Delta}\neq \emptyset$ and let  $H \in \maxh_{\Delta}$. We need to show that $T,H \models \p$. From (b) it follows that $T, H \models \delta$
hence, $r_{\delta}(H)=0$. But the latter means that  $r_{\Delta,\delta}(H) = r_\Delta(H)+ r_\delta(H)= r_\Delta(H)$.
Hence, if $H\in \maxh_{\Delta}$, we also have $H \in\maxh_{\Delta,\delta}$. But by (a), this means that $H\models\p$, thus showing our claim. 

% From (a) it follows that, for every $H \in  \maxh_{\Delta,\f}$ we have $T_{\Delta,\f}, H \models \p$. From (b) it follows that for every $H' \in \maxh_{\Delta}$ we have $T_\f, H' \models \f$, 
% hence $r_{\f}(H')=0$ and  $r_{\Delta,\f}(H') = r_\Delta(H')+ r_\f(H')= r_\Delta(H')$

{For (RMO), assume that $\Delta\rj\p$ and $\Delta \not\rj\neg\f$. Let us denote by  $H_\f$  all and only the hypothesis in $\H$ that entail  $\f$. %On the one hand, since $\Delta \rj \p$, we get that $\maxh_\delta \subseteq H_\p$. On the other hand, 
Since the hypotheses in $\H$ are mutually exclusive and exhaustive we have that the set of all and only the hypotheses entailing $\neg \f$ equals $\H\setminus H_\f$. Hence, from  $\Delta \not\rj\neg\f$ it follows that $\maxh_{\Delta}\not \subseteq \H \setminus H_\f$, that is,  
 $\maxh_{\Delta} \cap H_\f \neq \emptyset$.
Now, we claim that  $\maxh_{\Delta,\f}$ is a subset of $\maxh_{\Delta}$.
From this, together with the premise $\Delta\rj\p$, it follows that, for every $H\in\maxh_{\Delta,\f}$ we have $H\models \p$. But this amounts at saying that $\Delta,\f\rj\p$, thus showing that $(\text{RMO})$ holds.}

{Let us thus prove the claim that $\maxh_{\Delta,\f}$ is a subset of $\maxh_{\Delta}$. Assume by contradiction that there is a $H \in \maxh_{\Delta,\f}$ such that $H\not \in  \maxh_{\Delta}$. Now, recalling that $\maxh_\Delta\cap H_\f \neq \emptyset$, pick $H'$ in such a set. By the fact that $H'\in H_\f$ and part (2) of Definition \ref{def:degrej}, we have $r_\f(H') =0$, hence $r_{\f}(H')\leq r_\f(H)$. On the other hand since $H'\in\maxh_{\Delta}$ and $H\not\in \maxh_{\Delta}$, we have $r_\Delta(H')<r_\Delta(H)$. But these two facts entail that $r_{\Delta,\f}(H')< r_{\Delta,\f}(H)$, contradicting the fact that $H\in \maxh_{\Delta,\f}$.}

For (CMO) assume that $\Delta \rj \psi$ and $\Delta\rj \delta$, i.e. that for any $H\in \maxh_{\Delta}$ we have both $H\models\psi$ and $H\models\delta$. 
We now show that $\maxh_{\Delta,\delta} \subseteq \maxh_{\Delta}$. 

Suppose that there is a $H\in \maxh_{\Delta,\delta}$ such that $H\not \in \maxh_{\Delta}$.  Thus, there is a $H'\in \maxh_{\Delta}$ such that $r_{\Delta}(H)>r_{\Delta}(H')$. On the other hand, since $H\in \maxh_{\Delta,\delta}$  we have that $r_{\Delta,\delta}(H) \leq r_{\Delta,\delta}(H')$, from which it follows that $r_{\Delta}(H) + r_\delta(H) \leq r_{\Delta}(H') +  r_\delta(H')$ and $r_{\Delta}(H) -r_{\Delta}(H')\leq  r_\delta(H') - r_\delta(H)$. 

Since $r_{\Delta}(H)>r_{\Delta}(H')$, we have that  $0< r_{\Delta}(H) -r_{\Delta}(H')$. Consequently $0< r_\delta(H') - r_\delta(H)$ and $r_\delta(H)<r_\delta(H')$.  Recall that $\delta\in\fm_\H$ and in view of the assumption that the hypotheses are mutually exclusive and exhaustive, either $T,H \models \neg \delta$ or $T,H \models\delta$. Hence we have that $r_\delta(H) >0$ iff $H\models\neg \delta$, otherwise $r_\delta(H)= 0$. Since $H\models \delta$, then $H\not \models\neg \delta$  and $r_{\delta}(H)=0$. Therefore, from $r_\delta(H)<r_\delta(H')$ it follows that $r_\delta(H')>0$. But $r_\delta(H') >0$ means in turn that $H'\models \neg \delta$ which contradicts the fact that $H'\in \maxh_{\Delta}$, in view of the assumption $\Delta\rj \delta$.  We have thus shown that $\maxh_{\Delta,\delta}\subseteq \maxh_{\Delta}$. From this, the conclusion immediately follows. For, if $\maxh_{\Delta,\delta} = \emptyset$, we immediately get $\Delta,\delta\rj\p$. Otherwise,  for any $H\in  \maxh_{\Delta,\delta}$, we also have $H\in \maxh_{\Delta}$, and by the assumption $\Delta\rj\p$, that $H\models \p$. This finally shows that $\Delta,\delta\rj \p$.
\end{proof}

\medskip

\Fourthprop*	
\begin{proof}
%For $\text{(AND)}_l$, it suffices to show that, for each $H\in \H$, we have $r^\a_{\c\land\d}(H)\leq r^\a_{\c}(H) + r^\a_{\d}(H)$.
%
%First, let $H_i\in H$ with $i\neq n$. 
%In case $P(t_H(\c\land\d)|H)>\a$ then we have $r_{\c\land\d}^\a(H)=0$. On the other hand, we will also have, by the monotonicity of probability and $t_H$, that $P(t_H(\c)|H)>\a$ and $P(t_H(\c)|H)>\a$, from which it follows that $r_{\c\land\d}^\a(H)\leq r_\c^\a(H) + r_\d^\a(H)=0$.
%
%Consider now the case $P(t_H(\c\land\d)|H)\leq \a$. We thus have 
%$r_{\c\land\d}^\a(H)=1-P(t_H(\c\land \d)|H)$.
%Now, assume 
Let $\H = \{H_1,H_2\}$ and $\c,\d\in \fm_\D$ such that $\neg\c,\neg \d\models\bot$. Let $\a_1=\a_2= 0.3$ and both $t_{H_1}$ and $t_{H_2}$ coincide with the identity function. Consider the following probability assignments: 
\begin{align*}
P(\c\land\d \mid H_1) = 0.2  & \quad P(\c\land\d \mid H_2 ) = 0.1  \\
P(\c\land\neg \d \mid H_1 ) = 0 & \quad  P(\c\land\neg \d \mid H_2) = 0.4 \\
 P(\neg\c\land\d\mid H_1 ) = 0.8 & \quad P(\neg\c\land\d \mid H_2) = 0.5 
\end{align*}

Since $\neg\c,\neg\d\models \bot$ we have 
\[P(\neg \c\land \neg \d \mid H_1 ) = P(\neg \c\land \neg\d \mid H_2 ) =0. \] 

Now, by simple computations, we obtain 
\begin{align*}
	P(\c \mid H_1) = 0.2 &  \quad  P(\c \mid H_2) = 0.5  \\
	P(\d \mid H_1) = 1  & \quad P(\d \mid H_2) = 0.6 \\
\end{align*}

By the definition of $r^\a$, we thus get:

\begin{align*}
	r^\a_\c(H_1) = 0.8  &\quad r_\c( H_2) = 0  \\
	r^\a_\d (H_1) = 0  &\quad r_\d( H_2) = 0 \\
   r^\a_{\c\land\d} (H_1) = 0.8  & \quad r^\a_{\c\land\d}( H_2) = 0.9 \\
\end{align*}
We thus obtain \[ r^\a_{\c,\d}(H_1) = r^\a_\c(H_1)+ r^\a_\d (H_1) = 0.8 \]
and 
\[ r^\a_{\c,\d}(H_2) = r^\a_\c(H_2)+ r^\a_\d (H_2) = 0 \]

Thus $\maxh_{\c,\d} = \{H_2\}$, while on the other hand, since 
$r^\a_{\c\land\d} (H_1)< r^\a_{\c\land\d}( H_2)$, we also have 
$\maxh_{\c\land\d} = \{H_1\}$.

This shows that 
\[ \c,\d\arj H_2 \qquad \c\land\d \not\arj H_2\]
thus proving that the $(\text{AND})_l$ rule does not hold for $\arj$.

The same example also shows that the converse $(\text{AND})_l^{con}$ does not hold, since:
\[ \c\land\d\arj H_1 \qquad \c,\d \not\arj H_1\]

%
%We find a counterexample, showing that for a certain $r^\a$, the premise of $\text{(AND)}_l$ is satisfied, while the conclusion is not.
%Fix $\a\in(0,1)$, and let $\H = \{H_1,H_2\}$. Suppose $P(t(\d)|H_1) > \a$ and  $P(t(\c)|H_1) > \a $ while $P(t(\d\land \c)|H_1) \leq \a$,  with $\d,\c\in \fm_\D$ such that $\neg \d,\neg \c \models\bot$ and  both $\d\not \models \neg H_2$ and $\c\not \models \neg H_2$.
% 
%We will thus have $r^\a_{\d,\c} (H_1)= r^\a_{\d}(H_1) + r^\a_{\c}(H_1) = 0 $  and $r^\a_{\d,\c} (H_2)= r^\a_{\d}(H_2) + r^\a_{\c}(H_2) =0$. Hence $\maxh_{\d,\c} = \{H_1,H_2\}$, so that  
%\[ \d,\c\rj H_1\lor H_2\]
%On the other hand, since  $P(t_{H_1}(\d\land \c)|H_1) \leq \a$, we get $r^\a_{\d\land\c} (H_1) = 1- P(t_{H_1}(\d\land \c)|H_1) $, while still $r^\a_{\d\land\c} (H_2) = 0$. Hence $\maxh_{\d\land\c} = \{H_2\}$ and 
%\[ \d\land \c\not\rj H_1\lor H_2\] 
\end{proof}

\medskip

\Fifthprop*	

\begin{proof}
For (REF), (CUT),  (CMO), (RMO) proceed as in Proposition \ref{prop:rjaddrules}.

{For $(\umon)$, assume that $\Delta\urj\p$ and $\{\Delta\setminus\{\delta\}\urj  \p\}_{\delta\in\Delta }$. By way of contradiction, let us assume $\Delta,\f{\not\urj}\p$, i.e. that $\maxh_{\Delta,\f}\neq \emptyset$ (and thus $r_{\Delta,\f}(H_{s^*})<m+1$) and in particular there is a $H\in \maxh_{\Delta,\f}$ such that $H\not\models\p$. 
Note that  $H\not\in\maxh_{\Delta}$ , since otherwise by $\Delta\urj \p$, we would immediately get $H\models\p$. }
%Now in case $\maxh_{\Delta} \subseteq \maxh_{\Delta,\f}$ we would immediatley obtain that H

{Let us pick now any  $H'\in \maxh_{\Delta}$, so that  $r_{\Delta}(H)> r_{\Delta}(H')$ and $H' \models \p$ (since $\Delta\urj\p$). Note that, 
since $H\in \maxh_{\Delta,\f}$ and $H'\in \maxh_{\Delta}$, both $r_{\Delta}(H)$ and $r_{\Delta}(H')$ are distinct from $\infty$.
%r_{\Delta,\f}(H)\leq r
On the other hand, since  $H\in \maxh_{\Delta,\f}$ we have 
$r_{\Delta,\f}(H) \leq r_{\Delta,\f}(H')$, hence $ r_{\Delta}(H) + r_\f(H) \leq r_{\Delta}(H')+ r_{\f}(H')$ and  
$r_\f(H) - r_\f(H') \leq r_\Delta(H') - r_\Delta(H) < 0$. Thus in particular we obtain $r_\f(H)< r_\f(H')$.  But  this can only occurr if $r_\f(H) = 0$ and $r_\f(H') = 1$.
And this means that $r_{\Delta,\f}(H) =r_\Delta(H)$ while  $r_{\Delta,\f}(H') = r_\Delta(H') +1$ and since $r_{\Delta,\f}(H)\leq r_{\Delta,\f}(H')$, we have  $r_\Delta(H)\leq r_\Delta(H')+1$. Hence we have both  $r_\Delta(H) > r_\Delta(H')$ and 
$r_\Delta(H)\leq r_\Delta(H')+1$, that is $r_\Delta(H) = r_\Delta(H') +1$. }

% Hence we may only have $r_{\Delta,\f}(H) = r_{\Delta,\f}(H')$, with $r_{\Delta}(H')  = r_{\Delta}(H)+1$ and $r_\f(H) = r_\f(H') -1$. Recall that $r_{\Delta}(H') = r_{\Delta}(H)+1$. 
{This means that there is a $\ol{\delta}\in \Delta$ rejecting $H$ and not rejecting $H'$. Let us consider $\Delta\setminus\{\ol{\delta}\}$. By our choice of $\ol{\delta}$, we will now have  $r_{\Delta\setminus\{\ol{\delta}\}}(H') = r_{\Delta\setminus\{\ol{\delta}\}}(H)$, hence $H'\in \maxh_{\Delta\setminus\{\ol{\delta}\}}$.
But this, by the assumption, \newline $\{\Delta\setminus\{\delta\}\urj  \p\}_{\delta\in\Delta }$, entails $H'\models\p$, which is the desired contradiction.}
\end{proof}

\medskip

\Firstlemma*	

\begin{proof}
Let $\Delta,\c\urj\p$ and $\Delta,\d\urj\p$. To show 
$\Delta,\c\lor\d\urj\p$, let $H\in\maxh_{\c\lor\d}$. If either $H\in \maxh_{\c}$ or $H\in\maxh_{\d}$, we immediately get, by the assumption, that $H\models\p$. We thus assume  $H\not\in \maxh_{\c}$ and $H\not\in \maxh_{\d}$, and derive a contradiction. By $H\not\in \maxh_{\d}$, we obtain that there exists a $H'$ such that
\[r_{\Delta,\c} (H') = r_{\Delta}(H') + r_\c(H')  < r_{\Delta}(H) + r_\c(H) = r_{\Delta,\c} (H)  \]
and by $H\not\in \maxh_{\p}$, analogously, we obtain that there exists a $H''$ such that 
\[r_{\Delta,\d} (H'') = r_{\Delta}(H'') + r_\d(H'')  < r_{\Delta}(H) + r_\d(H) = r_{\Delta,\d} (H)  \]
However, since $H\in\maxh_{\c\lor\d}$ we have:
\[ r_{\Delta,\c\lor\d}(H)  = r_{\Delta}(H) + r_{\c\lor\d}(H)\leq r_{\Delta}(H') + r_{\c\lor\d}(H') = r_{\Delta,\c\lor\d}(H')   \]
and 
\[ r_{\Delta,\c\lor\d}(H)  = r_{\Delta}(H) + r_{\c\lor\d}(H)\leq r_{\Delta}(H'') + r_{\c\lor\d}(H'') = r_{\Delta,\c\lor\d}(H'').  \]
{Now, note that $r_{\c\lor\d}(H) = 1$ if and only if $\c\lor\d \models\neg H$. Hence $r_{\c\lor\d}(H) = 1$  if $\c\models\neg H$ and $\d\models\neg H$, and it is 0 otherwise.} In other words,  $r_{\c\lor\d}(H) = \min (r_{\c}(H),r_{\d}(H))$, 
 and the same clearly holds for $H'$ and $H''$. Thus,  combining our previous inequalities, we obtain
\[r_{\Delta}(H) + r_{\c\lor\d}(H) \leq r_{\Delta}(H') + r_{\c\lor\d}(H') \leq r_{\Delta}(H') + r_{\c}(H') < r_{\Delta}(H) + r_{\c}(H)   \]
and similarly
\[r_{\Delta}(H) + r_{\c\lor\d}(H) \leq r_{\Delta}(H'') + r_{\c\lor\d}(H'') \leq r_{\Delta}(H'') + r_{\d}(H'') < r_{\Delta}(H) + r_{\d}(H)   \]
{But now, since $r_{\c\lor\d}(H) = \min (r_{\c}(H),r_{\d}(H))$ it is either equal to $r_{\c}(H)$ or to $r_{\d}(H)$, contradiction. }
\end{proof}

\medskip

\Secondlemma*	

\begin{proof} Assume the two premisses of MT: 
\begin{itemize}
\item[(A1)] $\Delta, \varphi \urj \psi$, i.e. for each $H \in \maxh_{\Delta, \varphi}$, $H \models \psi$, and that 
\item[(A2)]$\Delta \urj  \neg \psi$, i.e. for each $H \in \maxh_{\Delta}$, $H \models \neg \psi$. 
\end{itemize}
We show that for each $H \in \maxh_{\Delta}$, $H \models \neg \varphi$. 
Suppose, on the contrary, that there exists $H' \in \maxh_{\Delta}$ such that $H' \not \models \neg \varphi$, i.e. $\varphi$ does not reject $H'$.  
%Since $\varphi$ rejects $H$ if and only if $\varphi \models \neg H$, we have that $\varphi \not \models \neg H'$, i.e. 
Therefore $H' \in \maxh_{\Delta, \varphi}$ and $H' \models \psi$, but this is in contradiction with (A2).
\end{proof}

\medskip

\Firsttheorem*	
	
	\begin{proof}
We will show how to find a derivation of $\Delta' \urj \neg H_j$,  proceeding by induction on the number of formulas occurring in $\Delta'$. 
Henceforth, in the applications of 
$(\umon)$ and $(\rmon)$ we will not explicitly consider the additional condition  that $r_{\Delta,\f}(H)$ is finite for each $H\in\H$ since this will always hold, under our assumption that $r_\Delta'(H)$ is finite for each $H$.	
	
%Lemma \ref{normalform} guarantees the existence of a set of
%\[\neg H_1^{l_1}, \dots, \neg H_n^{l_n} \urj \neg H_j,\] 
%one for every $H$ s.t. $H \models \psi$, where all the multiplicity indices $l_1, \dots, l_n$  are greater or equal than $0$ and $j\in \{1,\dots,n\}$.
%Pick any such consequences and denote its left-hand side by $\Delta'$.
% We will show how to find a derivation of $\Delta' \urj \neg H_j$. Henceforth, in the applications of 
%$(\umon)$ and $(\rmon)$ we will not explicitly consider the additional condition  that $r_{\Delta,\f}(H)$ is finite for each $H\in\H$ since this will always hold, under our assumption that $r_\Delta'(H)$ is finite for each $H$.
%%\paolo{$r_{\Delta,\f}(H_{s^*})\leq m$ and $r_{\Delta,\f}(H_i)\leq m+1$ for $H_i\neq H_{s^*}$ }, since this will always hold, under our assumption that $\Delta' \urj \neg H_j$.
%

For the base case, when $\Delta'$ is composed of only one formula, it has to be of the form $\neg H_j\urj \neg H_j$, which is clearly an instance of (REF). 
That this can be the only case when the size of $\Delta'$ is one, follows from the fact that, for any $i\neq j$ we would have that   $H_j\in \maxh_{\neg H_i}$ and $H_j\not \models \neg H_j$. Hence $\neg H_i\not \urj \neg H_j$ if $i\neq j$.
 
Now let us consider the inductive step. By Definition \ref{def:urj} and the normal form of the formulas in $\Delta'$ we have two possibilities:
\begin{enumerate}
\item  $\maxh_{\Delta'} = \{H\mid \neg H \not \in \Delta'\}$ that is, the least rejected hypotheses are those that do not occur in $\Delta'$ at all, or
  \item $\maxh_{\Delta'} =\{ H_k \mid l_k\neq 0 \text{ and } l_k$ is a minimal non-zero index among the $l_1,\dots, l_n\}.$
\end{enumerate}
 
%
%\[\maxh_{\Delta'} \subseteq  \{ H \mid \neg H \in \Delta'   \}. \]

In case (1), for any $H\in\maxh_{\Delta'}$, we have $\neg H \not \in\Delta'$, i.e. $r_{\Delta'}(H) = 0$.  However, we have that $H_j\not \in \maxh_{\Delta'}$. Indeed, if we had $H_j \in \maxh_{\Delta'}$, from $\Delta' \urj \neg H_j$, we would get that $  H_j\models\neg H_j$, which is absurd.

Now, if there are no hypotheses in $\Delta'$ distinct from $H_j$, this means that our consequence is of the form $(\neg H_j)^{l_j} \urj \neg H_j$. Hence we could reduce its size by the following application of $\rmon$.
\[ \infer[(\rmon)]{(\neg H_j)^{l_j}\urj\neg H_j}{\neg H_j^{l_j-1} \urj \neg H_j & (\neg H_j)^{l_j-1} \not \urj \neg \neg H_j } \]

Now upon removal of any formula $\neg H'$ from $\Delta'$ distinct from $\neg H_j$, we will still have $r_{\Delta'\setminus\{\neg H'\}}(H) = 0$, for any $H\in\maxh_{\Delta'}$. This means that, if $H \in\maxh_{\Delta'}$, then still $H\in\maxh_{\Delta'\setminus\{\neg H'\}}$. Recall now that $T_\Delta, H\models \neg H'$ for any $\neg H'\in \Delta'$, and $H\models\neg H_j$. 

On the other hand, by assumption, $\maxh_{\Delta'}$ does not contain $H_j$, and since $H'\neq H_j$, we may safely assume that $H_j \not\in \maxh_{\Delta'\setminus\{\neg H'\}}$.  This means that still $\Delta'\setminus\{\neg H'\}\urj H_j$.

Hence,  we  have shown that both 
$\Delta'\setminus\{\neg H'\}\not\urj H'$ and $\Delta'\setminus\{\neg H'\}\urj H_j$.  We then get $\Delta'\urj\neg H_j$ by applying {(\rmon)} as follows:
\[ \infer[({\rmon)}]{\Delta'\urj\neg H_j}{\Delta'\setminus \{\neg H'\} \urj \neg H_j & \Delta'\setminus \{\neg H'\} \not\urj \neg\neg H' }.\]
Let us now consider case (2), where the least rejected hypotheses occurr (negated) in $\Delta'$, i.e. for any $H\in\maxh_{\Delta'}$, we have $\neg H\in \Delta'$, hence $r_{\Delta'}(H) \neq 0$. 
Since $H_j \not \models \neg H_j$ and $\Delta'\urj\neg H_j$ we may again exclude that $H_j$ is any of the least rejected hypotheses.

Let us first consider the subcase (2a) where $H_j$ is the maximally rejected hypothesis. We need to distinguish three sub(sub)cases.

(2a') If in $\Delta'$ there is another maximally rejected hypothesis, say $H_p$, in addition to $H_j$, we use 
(\rmon) to remove (reasoning backwards) one of its occurrences.

First, note that since $\Delta'\urj \neg H_j$, we have, for any $H\in \maxh_{\Delta'}$, that $H\models\neg H_j$, and in particular $H_j \not \in \maxh_{\Delta'}$.  The removal of an occurrence of a maximally rejected $\neg H_p$ will not affect this. In other words, we still have $H_j\not \in \maxh_{\Delta'\setminus \{\neg H_p\}}$ hence ${\Delta'\setminus\{\neg H_p\}\urj\neg H_j}$.
On the other hand, recall that  by assumption of case (2) the least rejected hypotheses, say $H_k,$ occur in $\Delta'$. Upon removal of an occurrence of the maximally rejected hypothesis $H_p$, we still have $H_k \in \maxh_{\Delta' \setminus\{\neg H_p\}}$. Since $H_k\models \neg H_p$, we will then have $\Delta' \setminus\{\neg H_p\}\not\urj H_p$. 
Hence we may derive $\Delta'\urj \neg H_j$ as follows:
\[\infer[{(\rmon)}]{\Delta'\urj\neg H_j}{\Delta'\setminus\{\neg H_p\}\urj\neg H_j & \Delta'\setminus\{\neg H_p\} \not\urj \neg\neg H_p}.\]

(2a'') Assume now that: (i) $H_j$ is a maximally rejected hypothesis (as per assumption of case (2a));  (ii) $H_j$ is the unique maximally rejected hypothesis (in contrast to case (2a')); (iii) $H_k$ is a least rejected hypothesis, that by assumption (2) occurs (negated) in $\Delta'$. 

This means that we have $r_{\Delta'}(H_j)>r_{\Delta'}(H_k)$.
Note that $r_{\Delta'}(H_j)= r_{\Delta'\setminus\{\neg H_k\}}(H_j)  $ and $r_{\Delta'}(H_k) = r_{\Delta'\setminus\{\neg H_k\}}(H_k) +1$, hence
\begin{equation}\tag{*}\label{eq:*}
\ r_{\Delta'\setminus\{\neg H_k\}}(H_j) > r_{\Delta'\setminus\{\neg H_k\}}(H_k) +1 > r_{\Delta'\setminus\{\neg H_k\}}(H_k).
\end{equation}
This entails  $H_j\not\in \maxh_{\Delta'\setminus\{\neg H_k\}}$, hence
$\Delta'\setminus{\{\neg H_k\}}\urj \neg H_j$. However, for any $\neg H \in \Delta'\setminus\{H_k\}$, we have  either
$$ r_{\Delta'\setminus\{\neg H_k,\neg H\}}(H_j) =  r_{\Delta'\setminus\{\neg H_k\}}(H_j) - 1$$
or
$$r_{\Delta'\setminus\{\neg H_k,\neg H\}}(H_j) =  r_{\Delta'\setminus\{\neg H_k\}}(H_j)$$
depending on whether $\neg H \models \neg H_j$ (i.e. $H$ is actually $H_j$)  or not. 
In both cases we have
\[r_{\Delta'\setminus\{\neg H_k,\neg H\}}(H_j) \geq  r_{\Delta'\setminus\{\neg H_k\}}(H_j) - 1.\]
Hence we obtain
\[ r_{\Delta'\setminus\{\neg H_k,\neg H\}}(H_j) \geq r_{\Delta'\setminus\{\neg H_k\}}(H_j) - 1  > 
r_{\Delta'\setminus\{\neg H_k\}}(H_k) \geq r_{\Delta'\setminus\{\neg H_k,\neg H\}}(H_k) \]
where the strict inequality follows from \eqref{eq:*}, and the last inequality follows directly from the definition of the degree of rejection $r$.

%
%In the other case, we still have: 
%\[r_{\Delta'\setminus\{\neg H_k,\neg H\}}(H_j) =  r_{\Delta'\setminus\{\neg H_k\}}(H_j) > 
%r_{\Delta'\setminus\{\neg H_k\}}(H_k) \geq r_{\Delta'\setminus\{\neg H_k,\neg H\}}(H_k)\]

This shows that $H_j \not \in\maxh_{\Delta'\setminus\{\neg H_k,\neg H\}} $ for any choice of $\neg H$ (including $\neg H_j$). 
%, we need also have $H_k \in \maxh_{\Delta'\setminus\{\neg H_k\}} $, and since $H_k\models \neg H_j$ this means that 
Hence, for any $\neg H\in \Delta'$:  \[\Delta'\setminus\{\neg H_k,\neg H\}\urj  \neg H_j. \]
%since $H_j$ is a maximally rejected hypothesis of $\Delta'$,  upon removal of $\neg H_k$, we will still have $s(H_j) s(H_k)$.
We can thus derive $\Delta'\urj \neg H_j$ using the rule application:

\[\infer[(\umon)]{\Delta'\urj\neg H_j}{\Delta'\setminus{\{\neg H_k\}}\urj \neg H_j &  \{\Delta'\setminus\{\neg H_k,\neg H\}\urj  \neg H_j\}_{\neg H \in\Delta' } }    \]
and the above reasoning, showing that its premises hold.

%We we repeat the procedure until we either obtain $\neg H_j^{l_j} \urj \neg H_j$ 

%We are left to show that $\Delta' \urj \neg H_j $ is derivable when other negated hypothesis are present in $\Delta'$.
%We need to consider again two subcases.

Finally, our last subcase of $(2a)$ is $(2a''')$,  when there is no other hypotheses but $\neg H_j$ in $\Delta'$, i.e. $\Delta'\urj\neg H_j$  is actually of the form  ${\neg H_j^{l_j} \urj \neg H_j}$. We derive it, starting from $\neg H_j\urj \neg H_j$, by repeated applications of (\rmon), beginning with  
\[\infer[(\rmon)]{\neg H_j,\neg H_j\urj\neg H_j}{\neg H_j \urj \neg H_j & \neg H_j \not\urj \neg\neg H_j}.\]

Consider now case (2b), where $H_j$ is not a maximally rejected hypotheses, and 
let $H_p$ be any maximally rejected hypothesis. Since 
$\Delta'\urj \neg H_j$, we know that $H_j\not \in \maxh_{\Delta'}$, i.e. $H_j$ is not a least rejected hypothesis either. Upon removal of one occurrence of $\neg H_p$, $H_j$ will still not be a least rejected hypothesis, hence 
 $H_j\not \in \maxh_{\Delta'\setminus \{\neg H_p \}}$and  ${\Delta'\setminus \{\neg H_p \}\urj \neg H_j}.$ 
Moreover, there will be at least a least rejected hypothesis in $ \maxh_{\Delta'\setminus \{\neg H_p \}}$  that is distinct from $H_p$, hence  $\Delta'\setminus \{\neg H_p \} \not\urj H_p$.

This means that we may consider the following application of {(\rmon)}:

\[ \infer[{(\rmon)}]{\Delta'\urj \neg H_j} {\Delta'\setminus \{\neg H_p \}\urj \neg H_j  & \Delta'\setminus \{\neg H_p \} \not\urj \neg\neg H_p  }, \]
completing our proof.\end{proof}

\bibliographystyle{plain}

\begin{thebibliography}{10}


\bibitem{Adams1998}
E.~Adams.
\newblock {\em {A Primer of Probability Logic}}.
\newblock CSLI Publications, 1998.


% \bibitem{Badia2022}
% G. Badia, P. Cintula, L. Behounek, and A. Tedder.
% \newblock Relevant consequence relations: An invitation, 2022.

\bibitem{Baldi2023}
P.~Baldi, E.~A Corsi, and H.~Hosni.
\newblock {Logical Desiderata on Statistical Inference}.
\newblock {\em Forthcoming in “Walter Carnielli on Reasoning,
  Paraconsistency, and Probability”, Springer's series “Outstanding
  Contributions to Logic”.}, pages 1--24, 2023.

\bibitem{Baltag2021}
A.~Baltag, S.~{Rafiee Rad}, and S.~Smets.
\newblock {Tracking probabilistic truths: a logic for statistical learning}.
\newblock {\em Synthese}, 199(3-4):9041--9087, 2021.

\bibitem{Bookstein2014}
F.~L. Bookstein.
\newblock {\em {Measuring and Reasoning}}.
\newblock Cambridge University Press, 2014.

\bibitem{borges2007rules}
W.~Borges and J.~M.~Stern
\newblock{The rules of logic composition for the Bayesian epistemic e-values}.
\newblock{\em Logic Journal of the IGPL}, 15(5-6), 401--420, 2007.
  


\bibitem{Bosley1979}
R.~Bosley.
\newblock {Modus Tollens}.
\newblock {\em Notre Dame Journal of Formal Logic}, (1):103--111, 1979.

\bibitem{Carnielli2002}
W.~Carnielli, M.~Coniglio, and I.M.L. D'Ottaviano.
\newblock \emph{{Paraconsistency: The Logical Way to the Inconsistent}}.
\newblock Marcel Dekker, 2002.


\bibitem{Carnielli2007}
W. Carnielli, M.E. Coniglio, and J.Marcos. \newblock Logics of formal inconsistency. In Gabbay and Guenthner (Eds.), {\em Handbook of philosophical logic}, Vol. 14. Springer, 2007.


\bibitem{Carnielli2016}
W.~Carnielli and M.~Coniglio.
\newblock \emph{{Paraconsistent Logic: Consistency, Contradiction and
  Negation.}}
\newblock Springer, 2016.

\bibitem{Carnielli2017}
W. Carnielli and A. Rodrigues. \newblock An epistemic approach to paraconsistency: A logic of evidence and truth. \newblock{\em Synthese}, 196: 3789--3813, 2017. %\doi{https://doi.org/10.1007/s11229-017-1621-7}  

\bibitem{Carnielli2021} A. Rodrigues, A. Bueno-Soler, and W. Carnielli. \newblock Measuring evidence: a probabilistic approach to an extension of Belnap–Dunn logic. \newblock {\em Synthese} 198: 5451–5480, 2021. 
%\doi{https://doi.org/10.1007/s11229-020-02571-w}


\bibitem{Chow1996}
S.L. Chow.
\newblock {\em {Statistical Significance}}.
\newblock Sage, 1996.

\bibitem{Dickson2011}
M.~Dickson and D.~Baird.
\newblock {Significance Testing}.
\newblock In \emph{Handbook of the Philosophy of Science, vol 7: Philosophy of
  Statistics}, volume~7, pages 199--229. Elsevier, 2011.

\bibitem{Esteves2016}
L. Esteves, R. Izbicki, J. Stern, and R. Stern.
\newblock {The logical consistency of simultaneous agnostic hypothesis tests}.
\newblock {\em Entropy}, 18(7):1--22, 2016.


\bibitem{Falk1995}
R~Falk and C~W. Greenbaum.
\newblock {Significance Tests Die Hard: The Amazing Persistence of a
  Probabilistic Misconception}.
\newblock {\em Theory {\&} Psychology}, 5(1):75--98, 1995.

\bibitem{Fisher1935a}
R.~A. Fisher.
\newblock {Statistical Tests}.
\newblock \emph{Nature}, 136\penalty0 (3427):\penalty0 474, 1935.

\bibitem{Fisher1954}
R.~A. Fisher.
\newblock \emph{{Statistical Methods for Research Workers}}.
\newblock Hafner Publishing Company, New York, twelvefth edition, 1954.

\bibitem{Fisher1956}
R.~A. Fisher.
\newblock \emph{{Statistical Methods and Statistical Inference}}.
\newblock Oliver and Boyd, 1956.


% \bibitem{Fisher1958a}
% R~A Fisher.
% \newblock {The nature of probability}.
% \newblock {\em Centennial Review}, pages 261--274, 1958.

\bibitem{Gabbay2006}
D.M. Gabbay.
\newblock {\em {Logic for Artificial Intelligence {\&} Information
  Technology}}.
\newblock College Publications, London, 2007.

\bibitem{Galatos2007}
N.~Galatos, P.~Jipsen, T.~Kowalski, and H.~Ono.
\newblock {\em {Residuated lattices: An algebraic glimpse at substructural
  logics}}.
\newblock Elsevier, 2007.

\bibitem{Giles1982}
G. Giles. \newblock Semantics for fuzzy reasoning. \newblock
\emph{Int. J. Man-Machine Studies}, 17:401--415, 1982.


\bibitem{Halpern2003}
J.Y. Halpern.
\newblock {\em {Reasoning About Uncertainty}}.
\newblock MIT Press, 2003.

\bibitem{Hawthorne2007}
J.~Hawthorne and D.~Makinson.
\newblock {The quantitative/qualitative watershed for rules of uncertain
  inference}.
\newblock {\em Studia Logica}, 86(2):247--297, 2007.

\bibitem{Hey2020}
T.~Hey, K.~Butler, S.~Jackson, and J.~Thiyagalingam.
\newblock {Machine learning and big scientific data}.
\newblock {\em Philosophical Transactions of the Royal Society A: Mathematical,
  Physical and Engineering Sciences}, 378(2166), 2020.

\bibitem{Hosni2023a}
H. Hosni,  and J. Landes.
\newblock Logical Perspectives on the Foundations of Probability.
\newblock \emph{Open Mathematics}, 21(1), 2023.

\bibitem{Howson1993}
C.~Howson and Peter Urbach.
\newblock {\em {Scientific Reasoning: The Bayesian Approach}}.
\newblock Open Court, 1993.

\bibitem{Izbicki2015}
R. Izbicki and L. Esteves.
\newblock {Logical consistency in simultaneous statistical test procedures}.
\newblock {\em Logic Journal of the IGPL}, 23(5):732--758, 2015.


\bibitem{Kinraide2003}
T~B. Kinraide and R.~Denison.
\newblock {Strong inference: The way of science}.
\newblock {\em American Biology Teacher}, 65(6):419--424, 2003.

\bibitem{Klein2021}
D. Klein, O. Majer, and S. {Rafiee Rad}.
\newblock {Probabilities with Gaps and Gluts}.
\newblock {\em Journal of Phiosophical Logic}, 50:1107--1151, 2021.

\bibitem{Kraus1990}
S. Kraus, D. Lehmann, and M. Magidor.
\newblock {Nonmonotonic reasoning, preferential models and cumulative logics}.
\newblock {\em Artificial Intelligence}, 44(1-2):167--207, 1990.


\bibitem{Krueger2001}
J.~Krueger.
\newblock {Null hypothesis significance testing: On the survival of a flawed
  method}.
\newblock {\em American Psychologist}, 56(1):16--26, 2001.

\bibitem{Kyburg2001}
H.E. Kyburg and C.~{Man Teng}.
\newblock {\em {Uncertain Inference}}.
\newblock Cambridge University Press, 2001.

\bibitem{Kyburg2006}
H.E. Kyburg and C.~{Man Teng}.
\newblock {Nonmonotonic logic and statistical inference}.
\newblock \emph{Computational Intelligence}, 22\penalty0 (1):\penalty0 26--51,
  2006.


% \bibitem{Kyburg2007}
% H.E. Kyburg, C.~{Man Teng}, and G.~Wheeler.
% \newblock {Conditionals and consequences}.
% \newblock {\em Journal of Applied Logic}, 5(4):638--650, 2007.

\bibitem{Lehmann1992}
D.~Lehmann and M.~Magidor.
\newblock {What does a conditional knowledge base entail?}, 1992.

\bibitem{Lehmann2011}
E.L. Lehmann.
\newblock {\em {Fisher, Neyman, and the Creation of Classical Statistics}}.
\newblock 2011.


\bibitem{Makinson2005}
D.~Makinson.
\newblock \emph{{Bridges From Classical to Non-monotonic Logic}}.
\newblock College Publications, London, 2005.


\bibitem{Makinson2012}
D. Makinson.
\newblock {Logical questions behind the lottery and preface paradoxes: Lossy
  rules for uncertain inference}.
\newblock {\em Synthese}, 186(2):511--529, 2012.

\bibitem{Marquis2020}
P.~Marquis, O.~Papini, and H.~Prade.
\newblock {\em {A Guided Tour of Artificial Intelligence Research 1: Knowledge
  Representation, Reasoning and Learning}}.
\newblock Springer, 2020.

\bibitem{Mayo2018}
D~G. Mayo.
\newblock {\em {Statistical Inference as Severe Testing: How to Get Beyond the
  Statistics Wars}}.
\newblock Cambridge University Press, 2018.

%
%McCarthy, J., 1980. Circumscription – A Form of Non-Monotonic Reasoning. Artifical Intelligence, 13: 27–29.


\bibitem{Mierzewski2018}
K.~Mierzewski.
\newblock {Probabilistic Stability: dynamics, non-monotonic logics, and stable
  revision}.
\newblock {\em MSc Thesis, Amsterdam University}, 2018.

\bibitem{mundici1992logic}
D. Mundici.
\newblock {T}he logic of {U}lam's game with lies.
\newblock {\em Knowledge, belief and strategic interaction}, pages 275--284,
  1992.

\bibitem{mundici1993ulam}
D. Mundici.
\newblock Ulam games, {\L}ukasiewicz logic, and {AF} {C}*-algebras.
\newblock {\em Fundamenta Informaticae}, 18(2-4):151--161, 1993.

\bibitem{Omori2017}
H.Omori, and H.Wansing. \newblock 40 years of FDE: An introductory overview.\newblock  \emph{
Studia Logica}, 105(6):1021–1049, 2017.


\bibitem{Pearl1988}
J.~Pearl.
\newblock {\em {Probabilistic reasoning in intelligent systems: networks of
  plausible inference}}.
\newblock Morgan Kaufmann, 1988.

\bibitem{Pereira2022}
C.~A.B. Pereira and J.~M. Stern.
\newblock {The e-value: a fully Bayesian significance measure for precise
  statistical hypotheses and its research program}.
\newblock {\em Sao Paulo Journal of Mathematical Sciences}, 16(1):566--584,
2022.

\bibitem{Pollard1987}
P.~Pollard and J.~T. Richardson.
\newblock {On the Probability of Making Type I Errors}.
\newblock \emph{Psychological Bulletin}, 102\penalty0 (1):\penalty0 159--163,
  1987.



\bibitem{Platt1964}
J.R. Platt.
\newblock {Strong Inference}.
\newblock {\em Science}, 146(3642):347--353, 1964.

\bibitem{Renyi1961problem}
A. R{\'e}nyi.
\newblock On a problem of information theory.
\newblock {\em MTA Mat. Kut. Int. Kozl. B}, 6 :505--516, 1961.

\bibitem{Reiter1981}
R.~Reiter and G.~Criscuolo.
\newblock {On Interacting Defaults}.
\newblock \emph{IJCAI International Joint Conference on Artificial
  Intelligence}, pages 413--420, 1981.

\bibitem{Ritchie2020}
S.~Ritchie.
\newblock {\em {Science Fictions. How Fraud, Bias, Negligence and Hype
  Undermine the Search for Truth}}.
\newblock Macmillan, 2020.

\bibitem{Royall1997}
R.~Royall.
\newblock \emph{{Statistical Evidence: A likelihood paradigm}}.
\newblock Chapman {\&} Hall, 1997.


\bibitem{Shoham1987}
Y~Shoham.
\newblock {Nonmonotonic logics: Meaning and utility}.
\newblock \emph{Proceedings IJCAI-87}, pages 388--392, 1987.

\bibitem{stern2004paraconsistent}
J.~M.~Stern
\newblock{Paraconsistent sensitivity analysis for Bayesian significance tests}.
\newblock{\em Brazilian Symposium on Artificial Intelligence}, 134--143, 2004.
  
\bibitem{stern2017logically}
J.~M.~Stern, et al.
\newblock{Logically-consistent hypothesis testing and the hexagon of oppositions}.
\newblock{\em Logic Journal of the IGPL}, 25(5):741--757, 2017.


\bibitem{Szucs2017}
D.~Szucs and J.~P.A. Ioannidis.
\newblock {When null hypothesis significance testing is unsuitable for
  research: A reassessment}.
\newblock \emph{Frontiers in Human Neuroscience}, 11, 2017.

\bibitem{ulam1991adventures}
S.~M Ulam.
\newblock {\em Adventures of a Mathematician}.
\newblock Univ of California Press, 1991.

\bibitem{Wagner2004}
C.~G Wagner.
\newblock {Modus Tollens Probabilized}.
\newblock {\em British Journal for Philosophy of Science}, 55:747--753, 2004.

\bibitem{Wasserstein2016}
R~L. Wasserstein and N~A. Lazar.
\newblock {The ASA's Statement on p-Values: Context, Process, and Purpose}.
\newblock \emph{American Statistician}, 70\penalty0 (2):\penalty0 129--133,
  2016.

\bibitem{Wasserstein2019}
R~L. Wasserstein, A~L. Schirm, and N~A. Lazar.
\newblock {Moving to a World Beyond “p {\textless} 0.05”}.
\newblock \emph{American Statistician}, 73:\penalty0 1--19, 2019.

\end{thebibliography}

\end{document}